\newcommand{\rmM}{{\rm M}}
\newcommand{\End}{{\rm End}}
\newcommand{\Res}{{\rm Res}}
\newcommand{\Gal}{{\rm Gal}}
\newcommand{\Gr}{{\rm Gr}}
\newcommand{\GL}{{\rm GL}}
\newcommand{\ZZ}{\mathbb{Z}}
\newcommand{\QQ}{\mathbb{Q}}
\newcommand{\GG}{\mathbb{G}}
\newcommand{\RR}{\mathbb{R}}
\newcommand{\CC}{\mathbb{C}}
\newcommand{\calF}{\mathcal{F}}
\newcommand{\calL}{\mathcal{L}}
\newcommand{\calR}{\mathcal{R}}
\newcommand{\scrG}{\mathscr{G}}
\newcommand{\bb }{\langle}
\newcommand{\pp}{\rangle}
\newcommand{\llp}{(\!(}
\newcommand{\rlp}{)\!)}
\newcommand{\lp}{[\![}
\newcommand{\rp}{]\!]}
\newcommand{\Mspl}{{\rm M}^{\rm spl}}
\numberwithin{equation}{subsection}
\newtheorem{Theorem}{Theorem}[section]
\newtheorem{Remark}[Theorem]{Remark}
\newtheorem{Remarks}[Theorem]{Remarks}
\newtheorem{Lemma}[Theorem]{Lemma}
\newtheorem{Proposition}[Theorem]{Proposition}
\newtheorem{Corollary}[Theorem]{Corollary}
\newtheorem{Main Theorem}[Theorem]{Main Theorem}
\newtheorem{Definition}[Theorem]{Definition}
\renewcommand*\env@matrix[1][*\c@MaxMatrixCols c]{%
  \hskip -\arraycolsep
  \let\@ifnextchar\new@ifnextchar
  \array{#1}}
\newif\ifgrading
\newcommand{\calG}{{\mathcal G}}
\newcommand{\calO}{{\mathcal O}}
\newcommand{\Mloc}{{\rm M}^{\rm loc}}
\newcommand{\Spec}{{\rm Spec \, } }
\newcommand{\wti}{\widetilde}
\newcommand{\Addresses}{{
		\bigskip
		\footnotesize

\textsc{Centre de Mathématiques Laurent Schwartz (CMLS), CNRS, École polytechnique, Institut Polytechnique de Paris, 91120 Palaiseau, France}\par\nopagebreak
		\textit{E-mail address:} \texttt{stephane.bijakowski@polytechnique.edu}\\
  
		\textsc{Department of Mathematics, Universität Münster, Münster, 48149, Germany}\par\nopagebreak
		\textit{E-mail address:} \texttt{io.zachos@uni-muenster.de}\\
		
		\textsc{Morningside Center of Mathematics, Chinese Academy of Sciences,
			Beijing, 100190, China}\par\nopagebreak
		\textit{E-mail address:} \texttt{zhihaozhao@amss.ac.cn}
}}	
\begin{document}

\title{On the geometry of splitting models} 
	\date{}
	\author{S. Bijakowski, I. Zachos and Z. Zhao}
	\maketitle

	\begin{abstract}
We consider Shimura varieties associated to a unitary group of signature $(n-s,s)$ where $n$ is even. For these varieties, by using the spin splitting models from \cite{ZacZhao1}, we construct flat, Cohen-Macaulay, and normal $p$-adic integral models with reduced special fiber and with an explicit moduli-theoretic description over odd primes $p$ which ramify in the imaginary quadratic field with level subgroup at $p$ given by the stabilizer of a $\pi$-modular lattice in the hermitian space. We prove that the special fiber of the corresponding splitting model is stratified by an explicit poset with a combinatorial description, similar to \cite{BH}, and we describe its irreducible components. Additionally, we prove the closure relations for this stratification.
	\end{abstract}
	
\tableofcontents
\section{Introduction}\label{Intro}
\subsection{}
The goal of this paper is to construct ``nice" $p$-adic integral models of Shimura varieties over places of bad reduction. We consider Shimura varieties of PEL type, i.e. moduli spaces
of abelian varieties with polarization, endomorphisms and level structure, which are associated to unitary similitude groups of signature $(r, s)$
over an imaginary quadratic field $K$. These Shimura varieties have canonical models over the “reflex” number field $E$ which is the field of rational numbers $\mathbb{Q}$ if $r = s$ and $E = K$ otherwise.

One of the basic problems regarding the arithmetic of Shimura varieties is
the construction of such well-behaved integral models over the ring of integers $O_E$. The behavior of these depends very much on the “level subgroup”. Here, the level subgroup is the stabilizer of a $\pi$-modular lattice in an even dimensional $n$ hermitian space. (This is a lattice which is equal to its dual times a uniformizer.) 

By using work of Rapoport-Zink \cite{RZbook} we first construct $p$-adic integral models, which have simple and explicit moduli descriptions but are not always flat. These models are \'etale locally isomorphic to certain simpler schemes the \textit{naive local models}. By the work of Pappas-Rapoport \cite{PR2}, we consider a variation of the above moduli problem, the splitting models, where we add in the moduli problem an additional subspace in the deRham filtration $ {\rm Fil}^0 (A) \subset H_{dR}^1(A)$ of the universal abelian variety $A$, which satisfies certain conditions. In \cite{ZacZhao1}, the second and third author showed that these models are not flat for any signature $(r,s)$. Then by adding the ``spin condition" they considered the \textit{spin splitting models} and they proved that the models are smooth for the signature $(n-1,1)$ and have semi-stable reduction for $(n-2,2)$ and $(n-3,3)$. Here we study these models for higher signatures and, using methods employed in the work of the first author with V. Hernandez \cite{BH}, we analyze the special fiber of these models. 
In particular, we introduce and study a similar combinatorial stratification for which we can compute the closure relations. Then for any signature $(n-s,s)$, we show that the spin splitting models, which have an explicit moduli-theoretic description, are normal, Cohen-Macaulay and flat with reduced special fiber. Lastly, as an application we compare how this stratification relates to the Kottwitz-Rapoport stratification. 

It turns out that studying splitting models is a very fruitful way to get nice integral models of Shimura varieties and even integral models with semi-stable reduction. This method has been exploited in many cases including the Weil restriction cases in \cite{PR2} and \cite{BHUnr} and the ramified unitary cases in \cite{Kr}, \cite{Richarz}, \cite{Zac1}, \cite{ZacZhao1}, \cite{ZacZhao2} and \cite{HLS}. It is worth mentioning that in many of these works, the authors frequently consider variants of splitting models to obtain an “honest” splitting model, highlighting the need for a more general, group-theoretic definition of splitting models. Lastly, we want to mention that splitting models have found applications to the study of arithmetic intersections of
special cycles and Kudla’s program. For example, the Krämer splitting model \cite{Kr}, which has semi-stable reduciton, was used in the study of the modularity of arithmetic special cycles \cite{BHKRY} and the proof of  the Kudla-Rapoport conjecture \cite{HLSY}.

\subsection{} Let us give some details. Recall that $K$ is an imaginary quadratic field and we fix an embedding $\varepsilon: K\rightarrow \mathbb{C}$. Let $W$ be a $n$-dimensional $K$-vector space, equipped with a non-degenerate hermitian form $\phi$. Consider the group $G = GU_n$ of unitary similitudes for $(W,\phi)$ of even dimension $n=2m > 3$ over $K$. We fix a conjugacy class of homomorphisms $h: \Res_{\mathbb{C}/\mathbb{R}}\mathbb{G}_{m,\mathbb{C}}\rightarrow GU_n$ corresponding to a Shimura datum $(G,X_h)$ of signature $(n-s,s)$. Set $X = X_h$ and assume that $s\leq n-s$. The pair $(G,X)$ gives rise to a Shimura variety $Sh(G,X)$ over the reflex field $E$. Let $p$ be an odd prime number and assume that it is ramified in $K$. Set $K_1=K\otimes_{\QQ} \QQ_p$, $V=W\otimes_{\QQ} \QQ_p$ and let $\pi$ be a uniformizer of $K_1$. 
Assume that the hermitian form $\phi$ splits on $V$, i.e. there is a basis $e_1, \dots, e_n$ such that $\phi(e_i, e_{n+1-j})=\delta_{ij}$ for $1\leq i, j\leq n$, and define 
\[
\Lambda_i = \text{span}_{O_{K_1}} \{\pi^{-1}e_1, \dots, \pi^{-1}e_i, e_{i+1}, \dots, e_n\}
\]
the standard lattices in $V$. 
Next, consider the stabilizer subgroup
\[
P_{m}:=\{g\in GU_n\mid g\Lambda_m=\Lambda_m \}
\]
which is a special maximal parahoric subgroup in the sense of Bruhat Tits theory \cite{Tits} (see also \cite{PR} for more details).

Choose a sufficiently small compact open subgroup $K^p$ of the prime-to-$p$ finite adelic points $G({\mathbb A}_{f}^p)$ of $G$ and set $\mathbf{K}=K^pP_{\{m\}}$. The Shimura variety  ${\rm Sh}_{\mathbf{K}}(G, X)$ with complex points
 \[
 {\rm Sh}_{\mathbf{K}}(G, X)(\mathbb{C})=G(\mathbb{Q})\backslash X\times G({\mathbb A}_{f})/\mathbf{K}
 \]
is of PEL type and has a canonical model over the reflex field $E$. We set $\mathcal{O} = O_{E_v}$ where $v$ the unique prime ideal of $E$ above $(p)$.

We consider the moduli functor $\mathcal{A}^{\rm naive}_{\mathbf{K}}$ over $\Spec \mathcal{O} $ given in \cite[Definition 6.9]{RZbook}:\\
A point of $\mathcal{A}^{\rm naive}_{\mathbf{K}}$ with values in the 
$\mathcal{O}  $-scheme $S$ is the isomorphism class of the following set of data $(A,\bar{\lambda},\iota, \bar{\eta})$:
\begin{enumerate}
    \item $A$ is an abelian scheme over $S$ of dimension $n$.
    \item $\lambda$ is a polarization, principal at $p$.
    \item $\iota: \mathcal{O} \rightarrow \text{End}(A)$, making the Rosati involution and the complex conjugation compatible.
    \item $ \bar{\eta} $ is a level structure away from $p$.
\end{enumerate}
(See \S \ref{Int} for more details.) 
The functor $\mathcal{A}^{\rm naive}_{\mathbf{K}}$ is representable by a quasi-projective scheme over $\mathcal{O}$ and there is a natural isomorphism
\[
\mathcal{A}^{\rm naive}_{\mathbf{K}} \otimes_{\calO} E_v = {\rm Sh}_{\mathbf{K}}(G, X)\otimes_{E} E_v.
\]

As is explained in \cite{RZbook} and \cite{P},  the moduli scheme $\mathcal{A}^{\rm naive}_{\mathbf{K}}$ is connected to the naive local model ${\rm M}^{\rm naive}$ via the local model diagram 
\[
\begin{tikzcd}
&\wti{\mathcal{A}}^{\rm naive}_{\mathbf{K}}(G, X)\arrow[dl, "\psi_1"']\arrow[dr, "\psi_2"]  & \\
\mathcal{A}^{\rm naive}_{\mathbf{K}}  && {\rm M}^{\rm naive}
\end{tikzcd}
\]
where the morphism $\psi_1$ is a $\scrG$-torsor and $\psi_2$ is a smooth and $\scrG$-equivariant morphism. Here, $\scrG$ is the smooth Bruhat-Tits group scheme over $\text{Spec}(\mathbb{Z}_p)$ such
that $P_{\{m\}} = \scrG(\mathbb{Z}_p)$ and $\scrG \otimes_{\mathbb{Z}_p} \mathbb{Q}_p = G\otimes_{\mathbb{Q}} \mathbb{Q}_p $. In particular, since $\scrG$ is smooth, the above imply that $\mathcal{A}^{\rm naive}_{\mathbf{K}} $ is \'etale locally isomorphic to ${\rm M}^{\rm naive}$. From \cite[Remark 2.6.10]{PRS}, we have that the naive local model is never flat  and by the above the
same is true for $\mathcal{A}^{\rm naive}_{\mathbf{K}} $. Denote by $ \mathcal{A}^{\rm flat}_{\mathbf{K}} $ the flat closure of ${\rm Sh}_{\mathbf{K}}(G, X)\otimes_{E} E_v$ in $ \mathcal{A}^{\rm naive}_{\mathbf{K}}$. As in \cite{PR}, there is a relatively representable smooth morphism of relative dimension ${\rm dim} (G)$,
\[
\mathcal{A}^{\rm flat}_{\mathbf{K}} \to [\scrG \backslash \Mloc]
\]
where the local model $\Mloc $ is defined as the flat closure of $ {\rm M}^{\rm naive} \otimes_{\mathcal{O}} E_v$ in ${\rm M}^{\rm naive}$. Here we want to mention that from \cite[\S 8.2.4]{PRS} we have that $\Mloc $ agrees with the ``canonical", in the sense of Scholze-Weinstein, Pappas-Zhu {\color{blue} \cite{PZ}} local model $\mathbb{M}_{P_{\{m\}}}(G,\mu_{r,s})$ for the local model triple $(G,\mu_{r,s},P_{\{m\}})$; for the definition of the cocharacter $\mu_{r,s}$ see (\ref{cochar}).

We now consider the following variation $\mathcal{A}_{\mathbf{K}}$ of the above moduli problem of abelian schemes given in \cite[Definition 14.1]{PR2}. A point of $\mathcal{A}_{\mathbf{K}}$ with values in the $O_{K_1}$-scheme $S$ is the isomorphism class of the following set of data $(A,\bar{\lambda},\iota, \bar{\eta},\calG) $ such that
\begin{enumerate}
\item The quadruple $(A,\bar{\lambda},\iota, \bar{\eta})$ is an object of  $\mathcal{A}^{\rm naive}_{\mathbf{K}}(S)$.
\item	The subspace $\calG$ satisfying $ \calG \subset {\rm Fil}^0 (A) \subset H_{dR}^1(A)$ of rank $s$, with the \textit{splitting conditions}
\[
\quad O_{K_1} ~\text{acts by $\sigma_1$ on $\calG$, and by $\sigma_2$ on ${\rm Fil}^0 (A)/\calG$},
\]
where $\Gal(K_1/\QQ_p)=\{\sigma_1=id, \sigma_2\}$.
\end{enumerate}
Here we want to mention that the splitting conditions imply $(\iota(\pi)-\pi )\mathcal{G} = (0) $ and $(\iota(\pi)+\pi ){\rm Fil}^0 (A) \subset \calG$. As observed in \cite{ZacZhao1}, $\mathcal{A}_{\mathbf{K}}$ is never flat for any signature $(n-s,s)$. In loc. cit., the authors considered the closed subscheme $\mathcal{A}^{\rm spl}_{\mathbf{K}} \subset \mathcal{A}_{\mathbf{K}}$ defined by the
additional condition that 

\begin{enumerate}[resume]
\item (Spin Condition)  The rank of $(\iota(\pi)+\pi)$ on $ {\rm Fil}^0 (A)$ has the same parity as $s$.
\end{enumerate}
There is a forgetful morphism $\tau_1 :   \mathcal{A}^{\rm spl}_{\mathbf{K}} \longrightarrow \mathcal{A}^{\rm naive}_{\mathbf{K}}\otimes_{\mathcal{O}} O_{K_1}.$ Also, $ \mathcal{A}^{\rm spl}_{\mathbf{K}} $ has the same \'etale local structure as the splitting model $\Mspl$; it is a ``linear modification" of $\mathcal{A}^{\rm naive}_{\mathbf{K}}\otimes_{\mathcal{O}} O_{K_1}$ in the sense of \cite[\S 2]{P}. Note that there is also a corresponding forgetful morphism $\tau : \Mspl \longrightarrow  {\rm M}^{\rm naive} \otimes_{\mathcal{O}} O_{K_1}$.

The behavior of the splitting models depends heavily on the level subgroup and becomes more complicated for higher signatures. Let us briefly mention some results from the existing literature: A) For the self-dual case, Kr\"amer \cite{Kr} proved that $\Mspl$ is regular with semi-stable reduction but as the second author showed in \cite{Zac1} this is not true for the signature $(n-2,2)$. Also, in \cite{Zac1}, the author shows that the blow-up of the splitting model along a smooth (non Cartier) divisor produces a regular semistable model. For higher signatures, the work of the first author with Hernandez \cite{BH} shows that $\Mspl$ is always flat, normal with reduced special fiber but not semi-stable. B) For the almost $\pi$-modular case, Richarz \cite{Richarz} proved $\Mspl$ has semi-stable reduction for $(n-1,1)$. C) For the strongly non-special maximal cases the second and  third author \cite{ZacZhao2} showed that $\Mspl$ is flat, normal, and Cohen-Macaulay for $(n-1,1)$ and then they resolved the singularities by considering a certain blow-up. (For this case a different variation of a splitting model was studied more recently by He-Luo-Shi \cite{HLS}). Here we want to mention that for all these cases the splitting conditions imply the spin condition. However, the plot is thicker for the $\pi$-modular case that we consider in this paper since, as we mentioned above, the splitting model without the spin condition is never flat.

\subsection{} The main result of the present paper is the following theorem.

 \begin{Theorem}
For any signature $ (n-s,s) $ and for every $K^p$ as above, there is a scheme $\mathcal{A}^{\rm spl}_{\mathbf{K}}$, flat over $\Spec(O_{K_1})$, 
 with
 \[
\mathcal{A}^{\rm spl}_{\mathbf{K}}\otimes_{O_{K_1}} K_1={\rm Sh}_{\mathbf{K}}(G, X)\otimes_{E} K_1,
 \]
 and which supports a local model diagram
  \begin{equation}\label{LMdiagramRegIntro}
\begin{tikzcd}
&\wti{\mathcal{A}}^{\rm spl}_{\mathbf{K}}(G, X)\arrow[dl, "\pi^{\rm reg}_K"']\arrow[dr, "q^{\rm reg}_K"]  & \\
\mathcal{A}^{\rm spl}_{\mathbf{K}}  &&  {\rm M}^{\rm spl}
\end{tikzcd}
\end{equation}
such that:
\begin{itemize}
\item[a)] $\pi^{\rm reg}_{\mathbf{K}}$ is a $\scrG$-torsor for the parahoric group scheme $\scrG$ that corresponds to $P_{\{m\}}$.

\item[b)] $q^{\rm reg}_{\mathbf{K}}$ is smooth and $\scrG$-equivariant.

\item[c)] $\mathcal{A}^{\rm spl}_{\mathbf{K}}$ is 
normal, Cohen-Macaulay and its special fiber is reduced. 
\end{itemize}
 \end{Theorem}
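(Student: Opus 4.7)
The plan is to reduce every assertion about $\mathcal{A}^{\rm spl}_{\mathbf{K}}$ to the corresponding assertion about the local model $\mathrm{M}^{\rm spl}$ via a local model diagram, and then to establish those local properties by a careful combinatorial analysis of the special fiber of $\mathrm{M}^{\rm spl}$ along the lines of \cite{BH}.

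\textbf{Construction of the diagram.} First I would build $\wti{\mathcal{A}}^{\rm spl}_{\mathbf{K}}$ as the moduli problem parametrizing data $(A,\bar\lambda,\iota,\bar\eta,\calG)$ as in the definition of $\mathcal{A}^{\rm spl}_{\mathbf{K}}$ together with a trivialization of the lattice chain associated with $P_{\{m\}}$ (in the sense of Rapoport--Zink). The forgetful map $\pi^{\rm reg}_{\mathbf{K}}$ forgets the trivialization and is automatically a $\scrG$-torsor, while the map $q^{\rm reg}_{\mathbf{K}}$ sending a point to the position of $\calG\subset{\rm Fil}^0(A)\subset H^1_{\rm dR}(A)$ against the trivialized lattice chain lands in $\Mspl$; that $q^{\rm reg}_{\mathbf{K}}$ is smooth and $\scrG$-equivariant is now a formal consequence of the deformation theory of abelian schemes with PEL structure and of the Grothendieck--Messing theorem, exactly as in the corresponding naive statement. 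The generic fiber comparison $\mathcal{A}^{\rm spl}_{\mathbf{K}}\otimes_{O_{K_1}}K_1 = {\rm Sh}_{\mathbf{K}}(G,X)\otimes_E K_1$ follows because on the generic fiber the splitting and spin conditions are automatic, so the morphism $\tau_1$ becomes an isomorphism after inverting $p$.

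\textbf{Local reduction.} Since $\pi^{\rm reg}_{\mathbf{K}}$ is a torsor under the smooth group scheme $\scrG$ and $q^{\rm reg}_{\mathbf{K}}$ is smooth, the scheme $\mathcal{A}^{\rm spl}_{\mathbf{K}}$ is étale locally isomorphic to $\Mspl$. In particular, flatness over $\Spec O_{K_1}$, normality, the Cohen--Macaulay property, and reducedness of the special fiber are all étale local properties, so it suffices to verify them for $\Mspl$ itself. The reduced generic fiber of $\Mspl$ is the Grassmannian that presents the minuscule Schubert variety attached to $\mu_{r,s}$; its $K_1$-points are already identified with the image of the Shimura variety under the local model diagram.

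\textbf{Analysis of $\Mspl$.} This is the main content, and I would proceed as in \cite{BH}: write down explicit matrix equations for the affine charts of $\Mspl$ coming from the splitting and spin conditions, and introduce the combinatorial stratification of the special fiber by the ranks of the various natural maps between the tautological subspaces and $\calG$. I would then show that each stratum is smooth and irreducible, compute its dimension, and establish the closure relations by producing explicit one-parameter families degenerating points of one stratum to points of another. This will identify the irreducible components of the special fiber, prove that it is equidimensional of the expected dimension, and show that it is generically reduced. Combined with a direct verification that the ambient ring in each chart is Cohen--Macaulay (for example by exhibiting a regular sequence cut out by the defining equations together with a uniformizer), Serre's criterion $R_0+S_1$ yields reducedness of the special fiber, flatness of $\Mspl$ over $O_{K_1}$ follows from reducedness and equidimensionality of the special fiber together with flatness in the generic fiber, and normality follows from $R_1+S_2$ applied to the total space once the singular locus of the special fiber is shown to have codimension $\ge 2$ via the combinatorial description.

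\textbf{Main obstacle.} The real difficulty is in the combinatorial step for general signature $(n-s,s)$: controlling the closure relations in the stratification and proving that the singular locus of the special fiber has codimension at least two. The spin condition cuts out a component in a non-obvious way, and as $s$ grows the poset indexing the strata becomes genuinely intricate, so the bulk of the effort will go into producing the right order-theoretic description of the stratification, mirroring \cite{BH}, and then extracting from it the codimension bound that powers Serre's criterion.
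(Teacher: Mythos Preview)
Your construction of the local model diagram and the reduction to \'etale-local properties of $\Mspl$ are correct and match the paper. The gap is in how you propose to establish those properties. The paper does introduce the stratification $X_{h,l}$ and compute the closure relations, but this is a companion result, not the engine of the flatness/Cohen--Macaulay/reducedness proof. What the paper actually does is compute the local ring of $\Mspl$ at a point of the closed stratum $X_{\varepsilon,s}$ and, after stripping off smooth factors, reduce to
\[
R \;=\; W_{\mathcal{O}}(k)[T,W] / \bigl(T+T^t,\ W+W^t,\ TW - 2\pi I_{s'}\bigr),
\]
a ring governed by a pair of skew-symmetric matrices of even size $s'$. For $s'\ge 4$ this is \emph{not} a complete intersection: the entries of $TW$ do not form a regular sequence on the locus of skew-symmetric pairs, so your proposed route to Cohen--Macaulayness ``by exhibiting a regular sequence cut out by the defining equations'' cannot work. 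The paper instead identifies the special fiber with $S(I)/(a)$, where $I$ is the ideal of submaximal Pfaffians of a generic skew-symmetric matrix and $a$ is a non-zerodivisor, and deduces Cohen--Macaulayness from Baetica's theorem that $S(I)\simeq\calR(I)$ is Cohen--Macaulay. Reducedness of the special fiber is proved by a separate inductive argument, modeled on Kotzev's treatment of the symmetric case, using a standard-monomial basis for the Pfaffian ring rather than via $R_0+S_1$.

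Your flatness step also needs repair: ``reducedness and equidimensionality of the special fiber together with flatness in the generic fiber'' is not by itself a flatness criterion over a DVR. The paper proves topological flatness directly, using the $\GL_{s'}$-action $(T,W)\mapsto (uTu^t,(u^t)^{-1}Wu^{-1})$ on $\Spec R$ to reduce to finitely many orbit representatives in the special fiber and then writing down explicit lifts of each representative to characteristic zero; combined with reducedness of the special fiber this gives flatness, and normality is then immediate from flatness, reduced special fiber, and smooth generic fiber. In short, the ``main obstacle'' is not the combinatorics of the $X_{h,l}$ poset but the Pfaffian commutative algebra of $R$, and that ingredient is missing from your plan.
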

Note that by the above $\mathcal{A}^{\rm spl}_{\mathbf{K}}$ has an explicit moduli-theoretic description. Since every point of $\mathcal{A}^{\rm spl}_{\mathbf{K}} $ has an \'etale neighborhood which is also \'etale over ${\rm M}^{\rm spl}$, it suffices to show that ${\rm M}^{\rm spl}$ has the above nice properties. In particular it's enough to study the special fiber $\overline{\mathrm{M}}^{\mathrm{spl}}$ of ${\rm M}^{\rm spl}$. We do this by putting a certain combinatorial stratification described below.

First let's consider the special fiber $\overline{\mathcal{A}}^{\mathrm{spl}}_{\mathbf{K}}$ of $\mathcal{A}^{\rm spl}_{\mathbf{K}}$: recall that the subspace $ \calG \subset {\rm Fil}^0 (A)$ has rank $s$. Also, there is a second direct factor $ \calG'  \subset {\rm Fil}^0 (A)$ of rank $r$, which is obtained
from $\calG$ and the polarization $\bar{\lambda}$ (see \S \ref{Splitting} for more details). Define the subspace $A_{h,l} = \{x\in \overline{\mathcal{A}}^{\mathrm{spl}}_{\mathbf{K}}| \, \text{dim } (\iota(\pi) ){\rm Fil}^0 (A)  =h ,\, \text{dim } \calG \cap \calG'=l\}$ of $\overline{\mathcal{A}}^{\mathrm{spl}}_{\mathbf{K}}$. Using the local model diagram, $A_{h,l}$ corresponds to the subspace $X_{h,l}$ of $\overline{\mathrm{M}}^{\mathrm{spl}}$ (see Definition \ref{defhl}). 
Now, we are ready to define a stratification on $\overline{\mathrm{M}}^{\mathrm{spl}}$. 
\begin{Proposition}
Let $(h,l)$ be integers with $ 0\leq h\leq l\leq s$ and $h,l  = s \mod 2$. Then
\[
\overline{\mathrm{M}}^{\mathrm{spl}} = \coprod_{\substack{0\leq h\leq l\leq s \\ h,l \, \equiv \, s \, \text{mod} \, 2}} X_{h,l}.
\]
Each stratum $X_{h,l}$ is nonempty and smooth, and is
equidimensional of dimension $ rs - \frac{(l-h)(l-h-1)}{2}$.
\end{Proposition}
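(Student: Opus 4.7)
The plan is to transport the question from $\overline{\mathcal{A}}^{\mathrm{spl}}_{\mathbf{K}}$ to the special fiber $\overline{\mathrm{M}}^{\mathrm{spl}}$ of the local model via the diagram (\ref{LMdiagramRegIntro}). Since $\pi^{\rm reg}_{\mathbf{K}}$ is a $\scrG$-torsor and $q^{\rm reg}_{\mathbf{K}}$ is smooth and $\scrG$-equivariant, the properties of being nonempty, smooth, and equidimensional of a given dimension descend from $\overline{\mathrm{M}}^{\mathrm{spl}}$ to $\overline{\mathcal{A}}^{\mathrm{spl}}_{\mathbf{K}}$, and it suffices to prove the statement for $X_{h,l}\subset\overline{\mathrm{M}}^{\mathrm{spl}}$. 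Concretely, a geometric point of $\overline{\mathrm{M}}^{\mathrm{spl}}$ is given by a rank-$n$ subspace $\calF \subset \Lambda_m \otimes_{O_{K_1}} k$ satisfying the usual Rapoport--Zink and wedge conditions of signature $(r,s)$, together with a rank-$s$ subspace $\calG \subset \calF$ satisfying $\iota(\pi)\calF \subset \calG \subset \ker(\iota(\pi)|_\calF)$, plus the spin condition $\dim \iota(\pi)\calF \equiv s \pmod 2$. The second piece $\calG' \subset \calF$ of rank $r$ is determined from $\calG$ via the polarization as in \S \ref{Splitting}.

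The parity constraints come out as follows. The spin condition is exactly $h \equiv s \pmod 2$. For $l$, one uses that $\iota(\pi)\calF \subset \calG \cap \calG'$ together with the fact that the polarization induces, on the quotient $\calF/\iota(\pi)\calF$, a nondegenerate alternating form under which $\calG/\iota(\pi)\calF$ and $\calG'/\iota(\pi)\calF$ sit as subspaces of the expected dimension. The intersection $(\calG \cap \calG')/\iota(\pi)\calF$ is the radical of the restriction of this form to $\calG/\iota(\pi)\calF$, whose corank is necessarily even, forcing $l - h \equiv 0 \pmod 2$ and hence $l \equiv s \pmod 2$. Nonemptiness is then checked by an explicit construction using the standard basis $e_1,\dots,e_n$: for each admissible pair $(h,l)$ one exhibits coordinate subspaces $\calF$ and $\calG$ of the prescribed dimensions satisfying the splitting and spin conditions and realizing the prescribed intersection, establishing $X_{h,l}\neq \emptyset$.

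For smoothness and dimension I would follow the strategy of \cite{BH}: on a suitable open affine chart of $\overline{\mathrm{M}}^{\mathrm{spl}}$, realize $\calF$ as the graph of a matrix $X$ and $\calG$ as the graph of a matrix $Y$, so that the conditions $\iota(\pi)\calF \subset \calG$, $\iota(\pi)\calG = 0$, and the rank constraints $\dim \iota(\pi)\calF = h$ and $\dim \calG \cap \calG' = l$ become explicit rank constraints on certain submatrices of $X$ and $Y$. The stratum $X_{h,l}$ is then fibered over a product of Grassmannians that parametrize $\iota(\pi)\calF$ (of dimension $h$) and $\calG \cap \calG' \supset \iota(\pi)\calF$ (of dimension $l$), with fiber an affine bundle; the constraint ``$\dim\calG\cap\calG' = l$'' translates, via the alternating form described above, into the condition that an auxiliary $(s-h)\times(s-h)$ skew-symmetric matrix have corank exactly $l-h$. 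Since the dimension of the space of alternating $(l-h)\times(l-h)$ matrices is $(l-h)(l-h-1)/2$, adding up the contributions yields $\dim X_{h,l} = rs - (l-h)(l-h-1)/2$, and smoothness follows because each stratum is cut out by a transverse system of rank conditions on the ambient smooth chart.

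The main obstacle, I expect, is the combinatorial bookkeeping that identifies each stratum cleanly with a smooth bundle over the correct flag-type base, and in particular verifying that the spin and splitting conditions interact with the polarization so uniformly in $(h,l)$ that no hidden constraints appear beyond those captured by the pair $(h,l)$. The parity of $l$ is the conceptually subtle input, while once the chart is set up the dimension computation reduces to a linear-algebra parameter count.
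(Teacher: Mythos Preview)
Your overall approach matches the paper's: both reduce to the strategy of \cite{BH}, with the key new input being that the relevant bilinear form is alternating rather than symmetric, which is what produces the parity constraint on $l$ and the modified dimension formula $rs - \frac{(l-h)(l-h-1)}{2}$. The paper's own proof of the smoothness and dimension part (Proposition~\ref{irredcomp}) is in fact a one-line reference to \cite[Proposition~2.12]{BH} together with precisely this observation.

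Two points deserve correction. First, the opening paragraph is superfluous: the statement is already formulated for the special fiber $\overline{\mathrm{M}}^{\mathrm{spl}}$ of the splitting local model, not for $\overline{\mathcal{A}}^{\mathrm{spl}}_{\mathbf{K}}$, so no transport via the local model diagram is needed. Second, and more substantively, your placement of the alternating form is off. The polarization does not induce a nondegenerate form on $\calF/\iota(\pi)\calF$; indeed $\calF$ is totally isotropic for the ambient form $(\,,\,)$, so nothing nondegenerate descends there directly. What the paper actually does is construct a \emph{modified pairing} $\{\,,\,\}$ on $t\Lambda_m$ (in the special fiber, where $t=\iota(\pi)$ and $\pi=0$) by the formula $\{tx, ty\} = (tx, y)$; its matrix in the standard basis is $-J_n$, which is skew-symmetric. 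One then defines $\calG' := \calG^{\bot'}$ as the orthogonal of $\calG$ for this modified pairing inside $t\Lambda_m$, and $\calG \cap \calG'$ is exactly the radical of $\{\,,\,\}|_{\calG}$, so $s - l$ is the rank of a skew-symmetric $s\times s$ matrix, hence even. With this correction in place, your sketch of the parameter count and the fibration over flag-type bases is in line with the argument of \cite{BH} that the paper invokes.
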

Moreover we have the closure relations
\begin{Proposition}
Let $(h,l)$ be integers with $ 0\leq h\leq l\leq s$ and $h,l  = s \mod 2$. Then
\[
\overline{X_{h,l}} = \coprod_{\substack{0\leq  h' \leq h\leq l \leq l' \leq s \\ h',l' \, \equiv \, s \, \text{mod} \, 2 }} X_{h',l'}.
\]
\end{Proposition}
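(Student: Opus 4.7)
The plan is to transfer the question to the local model $\overline{\mathrm{M}}^{\mathrm{spl}}$ via the local model diagram (\ref{LMdiagramRegIntro}): since $\pi^{\rm reg}_{\mathbf{K}}$ is a $\scrG$-torsor and $q^{\rm reg}_{\mathbf{K}}$ is smooth, the closure relations for the strata $A_{h,l}$ in $\overline{\mathcal{A}}^{\mathrm{spl}}_{\mathbf{K}}$ coincide with those for the $X_{h,l}$ in $\overline{\mathrm{M}}^{\mathrm{spl}}$. I would then prove the two inclusions separately.

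For the inclusion $\overline{X_{h,l}} \subseteq \coprod_{h'\leq h,\, l'\geq l} X_{h',l'}$, the key input is semi-continuity of the two defining invariants. The function $x\mapsto \dim(\iota(\pi){\rm Fil}^0)(x)$ is lower semi-continuous, being the generic rank of a morphism of locally free sheaves, so the locus where it is $\leq h$ is closed; hence on $\overline{X_{h,l}}$ one has $h'\leq h$. Dually, $x\mapsto \dim(\calG\cap\calG')(x)$ is upper semi-continuous (it equals the rank of the kernel of $\calG\oplus\calG'\to {\rm Fil}^0$), so the locus where it is $\geq l$ is closed, and one has $l'\geq l$. The parity constraint $h',l'\equiv s\pmod{2}$ is automatic from the spin condition satisfied at every point of $\overline{\mathrm{M}}^{\mathrm{spl}}$.

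The reverse inclusion is the serious content, and here I would follow the strategy of \cite{BH}: construct explicit one-parameter deformations specializing from $X_{h,l}$ to $X_{h',l'}$ for each admissible $(h',l')$. Using the affine charts on $\overline{\mathrm{M}}^{\mathrm{spl}}$ obtained from the presentation in \cite{ZacZhao1}, the triple $(\iota(\pi),\calG,\calG')$ is described by explicit matrices and the strata become rank-type subvarieties cut out inside these charts. I would then build the degeneration in two elementary moves: one that drops $h$ by $2$ (enlarging $\ker\iota(\pi)$ on ${\rm Fil}^0$) and one that increases $l$ by $2$ (forcing extra coincidence of $\calG$ and $\calG'$). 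The shift by $2$ is forced by the spin condition and precisely produces the admissible strata. Iterating these moves from $(h,l)$ downwards in $h$ and upwards in $l$ reaches any $(h',l')$ with $h'\leq h\leq l\leq l'$, placing $X_{h',l'}$ in $\overline{X_{h,l}}$.

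The main obstacle will be verifying that these deformations remain inside $\overline{\mathrm{M}}^{\mathrm{spl}}$ throughout, i.e.\ that the spin condition and the splitting conditions are preserved along the chosen one-parameter family, and that the generic fibre of the deformation genuinely lies in $X_{h,l}$ rather than in a smaller stratum. A useful cross-check is supplied by the preceding proposition: the dimension formula $\dim X_{h,l}=rs-\tfrac{(l-h)(l-h-1)}{2}$ satisfies $\dim X_{h',l'}<\dim X_{h,l}$ whenever $h'\leq h\leq l\leq l'$ with $(h',l')\neq(h,l)$, so the strict containments in the predicted closure are consistent with the dimension-theoretic picture, and the closure $\overline{X_{h,l}}$ is forced to be exactly the union of the finitely many strata identified above.
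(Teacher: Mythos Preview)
Your strategy is sound, but two points are worth flagging.

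First, a framing issue: the proposition is stated on the local model side --- the $X_{h,l}$ are strata of $\overline{\mathrm{M}}^{\mathrm{spl}}$, not of $\overline{\mathcal{A}}^{\mathrm{spl}}_{\mathbf{K}}$. Your opening paragraph transfers in the wrong direction, and your semi-continuity argument is phrased in terms of $\iota(\pi)$, ${\rm Fil}^0$, $\calG'$ (Shimura-variety data) rather than $t$, $\calF_m$, $\calG_m^{\bot'}$ (local-model data). This is harmless mathematically since the invariants match under the local model diagram, but you should work directly with $(\calF_m,\calG_m)$ from the start.

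Second, your approach to the hard inclusion is genuinely different from the paper's. The paper does \emph{not} iterate elementary moves; instead, for an arbitrary point $x$ in a given stratum $X_{h,l}$ it chooses a basis of $t\Lambda_m$ adapted to the flag $t\calF_m\subset\calG_m\cap\calG_m^{\bot'}\subset\calG_m\subset(t\calF_m)^{\bot'}$ and to the (skew-symmetric) modified pairing, and then writes down in one stroke a lift over $k[\![u]\!]$ whose generic fibre lies in any prescribed $X_{h',l'}$ with $h\leq h'\leq l'\leq l$. The lift is governed by a pair of skew-symmetric matrices $(Z,Y_2-Y_2^t)$ of size $l-h$ satisfying $(Y_2-Y_2^t)Z=0$, with $h'=h+\operatorname{rk}(Z)$ and $l'=h+\dim\ker(Y_2-Y_2^t)$; the isotropy condition forces exactly these relations, and the spin condition is then automatic. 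Your iterative plan (drop $h$ by $2$, raise $l$ by $2$, repeat) is valid by transitivity of closure and would ultimately rely on the same local normal form, but it requires re-choosing coordinates at each step and verifying at each step that the other invariant stays fixed. The paper's single construction is cleaner and moreover makes visible why the isotropy condition couples $h$ and $l$ through the relation $(Y_2-Y_2^t)Z=0$.

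One small gap to watch in your version: ``construct a deformation specializing from $X_{h,l}$ to $X_{h',l'}$'' only exhibits \emph{one} point of $X_{h',l'}$ in $\overline{X_{h,l}}$. You need either to start from an \emph{arbitrary} point of the smaller stratum (as the paper does), or to invoke transitivity of the $\scrG\otimes k$-action on each $X_{h,l}$.
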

From the above we can deduce that the irreducible components of the scheme $\overline{\mathrm{M}}^{\mathrm{spl}} $ are the closures of $X_{h,h}$ for $h  = s \mod 2$ and $0 \leq h \leq s$. Also, our computations show that  ${\rm M}^{\rm spl}$ is not semi-stable when $s > 3$. We note that the cases $s \leq 3$ have already been addressed in \cite{ZacZhao1} using different techniques, where the authors proved that ${\rm M}^{\rm spl}$ is smooth for the signature $(n-1,1)$ and is semi-stable for the signatures $(n-2,2)$ and $(n-3,3)$.

Lastly, as we mentioned above, a similar stratification was previously used in \cite{BH} to study the splitting model for the level subgroup $I= \{0\}$. In Section \ref{appl}, we consider both level structures, i.e. $I = \{m\}$ and $I= \{0\}$. First, we compare the stratification of the splitting models given by $X_{h,l}$ with the stratification of the local models defined by the Schubert cells. Then, we build on this to produce some affine Demazure resolutions of certain Schubert varieties.
\smallskip

{\bf Acknowledgements:} We thank J. Louren\c{c}o and G. Pappas for useful comments on a preliminary version of this article. The second author would like to thank École Polytechnique for its hospitality during September 2024, during which part of this work was done. Also, the second author was supported by CRC 1442 Geometry: Deformations and Rigidity and the Excellence Cluster of the Universität Münster.

\section{Integral models of unitary Shimura varieties}\label{Integral}

In this section, we construct $p$-adic integral models of Shimura varieties in the case of a quasi-split unitary group with a special maximal parahoric level structure.

\subsection{Unitary Shimura varieties}\label{UnitaryShimura}
First we fix some notation. Let $K$ be an imaginary quadratic field with an embedding $\varepsilon: K\rightarrow \mathbb{C}$. Denote by $a\mapsto \bar{a}$ the non-trivial automorphism of $K$. Let $W$ be a $n$-dimensional $K$-vector space, equipped with a non-degenerate hermitian form $\phi$. Assume $n\geq 3$ and set $W_{\mathbb{C}}=W\otimes_{K,\varepsilon}\CC$. Choose a suitable isomorphism $W_{\mathbb{C}}\simeq \CC^n$, the non-degenerate hermitian form $\phi$ can be expressed as $\phi(x,y)=\bar{x}^tBy$, where 
\[
B={\rm diag}((-1)^{(s)}, 1^{(r)})
\]
(We write $a^{(m)}$ to denote a list of $m$ copies of $a$). We will say that $\phi$ has signature $(r,s)$ with $r+s=n$. By replacing $\phi$ to $-\phi$ if necessary, we can assume that $s\leq r$.  To give a complex structure on $W\otimes_{\QQ} \RR=W_{\CC}$, let $J: W_{\CC}\rightarrow W_{\CC}$ be the endomorphism given by the matrix $-\sqrt{-1}H$. We have $J^2 =-id$ and the endomorphism $J$ gives an $\RR$-algebra homomorphism $h_0: \CC\rightarrow \End_{\RR}(W\otimes_{\QQ} \RR)$ with $h_0(\sqrt{-1})=J$. 

Consider the group $G = GU_n$ of unitary similitudes for $(W,\phi)$ of dimension $n\geq 3$ over $\QQ$:
\[
G(R):=\{g\in \GL_{K}(W\otimes_{\QQ} R)\mid \phi(gx, gy)=c(g)\phi(x,y), \quad c(g)\in R^\times \}.
\]
We define a homomorphism $h: \Res_{\CC/\RR} \GG_{m,\CC}\rightarrow G_{\RR}$ by restricting $h_0$ to $\CC^\times$. Since $G\otimes \CC\simeq \GL_{n, \CC}\times \GG_{m,\CC}$, the image of $h_{\CC}(z, 1): \CC^\times\rightarrow G(\CC)\simeq \GL_n(\CC)\times \CC^\times$ is given by
\begin{equation}\label{cochar}
\mu_{r,s}(z)={\rm diag}(z^{(s)}, 1^{(r)}, z),
\end{equation}
up to conjugation. This is a cocharacter of $G$ defined over the number field $E$.

Let $X_h$ be the conjugation orbit of $h(i)$ under $G(\RR)$. The pair $(G, X_h)$ gives rise to a Shimura variety $Sh(G,X_h)$ over the reflex field $E$. (See \cite[\S 1.1]{PR} and \cite[\S 3]{P} for more details on the description of the unitary Shimura varieties.) Let $p$ be an odd prime which ramifies in $K$. We set $\mathcal{O} = O_{E_v}$ where $v$ the unique prime ideal of $E$ above $(p)$. Choose also a sufficiently small compact open subgroup $K^p$ of the prime-to-$p$ finite adelic points $G({\mathbb A}_{f}^p)$ of $G$ and a parahoric subgroup $K_p\subset G(\QQ_p)$. Set $\mathbf{K}=K^pK_p$. The Shimura variety  ${\rm Sh}_{\mathbf{K}}(G, X_h)$ with complex points
 \[
 {\rm Sh}_{\mathbf{K}}(G, X_h)(\mathbb{C})=G(\mathbb{Q})\backslash X_h\times G({\mathbb A}_{f})/\mathbf{K}
 \]
is of PEL type and has a canonical model over the reflex field $E$.

\subsection{Integral models of unitary Shimura varieties}\label{Int} 
It turns out that the parahoric subgroups $K_p$ are the neutral components of the subgroups of $G$ that stabilize certain sets of lattices. Set $K_1=K\otimes_{\QQ} \QQ_p$ and let $\pi$ be a uniformizer with $\pi^2=p$. Consider $V=W\otimes_{\QQ} \QQ_p$. We assume that the hermitian form $\phi$ is split on $V$, i.e. there is a basis $e_1, \dots, e_n$ such that $\phi(e_i, e_{n+1-j})=\delta_{ij}$ for $1\leq i, j\leq n$. We denote by 
\[
\Lambda_i = \text{span}_{O_{K_1}} \{\pi^{-1}e_1, \dots, \pi^{-1}e_i, e_{i+1}, \dots, e_n\}
\]
the standard lattices in $V$.

Now let $I$ be a non-empty subset of $\{0, \cdots, \lfloor \frac{n}{2} \rfloor \}$. The stabilizer subgroup 
\[
P_I:=\{g\in G(\QQ_p) \mid g\cdot\Lambda_i=\Lambda_i , \quad \forall i \in I\}.
\]
of $G(\QQ_p)$ is not a parahoric subgroup in
general, since it may contain elements with nontrivial Kottwitz invariant. (See \cite[\S 1.2]{PR} or \cite[\S 1.2]{PR3}.) However, since we are only interested in the special maximal parahoric subgroups in this paper, it turns out that these subgroups are indeed the stabilizer subgroups for some $I$.

By \cite{PR}, the special maximal parahoric subgroups of $GU_n$ depends on the parity of the dimension $n$. When $n=2m+1$ is odd, there are two special maximal parahoric subgroups up to conjugate, corresponding to $I=\{0\}$ and $I=\{m\}$. For $I=\{0\}$, integral models for different signatures have been studied by the works of Pappas \cite{P}, Kr\"amer \cite{Kr}, Zachos \cite{Zac1} and Bijakowski-Hernandez \cite{BH}. In the case of $I=\{m\}$, integral models for the signature $(n-1,1)$ have been studied by Arzdorf \cite{A} and Richarz \cite{Richarz}.

In this paper, we will focus on the special maximal parahoric subgroups when $n=2m$ is even. From \cite{PR}, we have that, in the even dimensional case, there is a unique special maximal parahoric subgroup up to conjugation, corresponding to $I=\{m\}$. From now on, we fix $n=2m$, $I=\{m\}$ and the parahoric subgroup $K_p=P_{\{m\}}$ such that $\mathbf{K}=K^pP_{\{m\}}$.

Let us describe the naive integral model $\mathcal{A}^{\rm naive}_{\mathbf{K}}$. We let $\mathcal{L}$ be the self-dual multichain consisting of lattices $\{\Lambda_j\}_{j\in n\mathbb{Z} \pm m}$. Let $\scrG = \underline{{\rm Aut}}(\mathcal{L})$ be the smooth group scheme over $\mathbb{Z}_p$ with $P_{\{m\}} = \scrG(\mathbb{Z}_p)$ the subgroup of $G(\mathbb{Q}_p)$ fixing the lattice chain $\mathcal{L}$. Consider the moduli functor $\mathcal{A}^{\rm naive}_{\mathbf{K}}$ over $\Spec \mathcal{O} $ given in \cite[Definition 6.9]{RZbook}: A point of $\mathcal{A}^{\rm naive}_{\mathbf{K}}$ with values in the $\Spec \mathcal{O} $-scheme $S$ is the isomorphism class of quadruples $(A,\iota, \bar{\lambda}, \bar{\eta})$ consisting of:
\begin{enumerate}
    \item An object $(A,\iota)$, where $A$ is an abelian scheme {\color{blue} of} relative dimension $n$ over $S$ (terminology
of \cite{RZbook}), compatibly endowed with an
action of $\calO$: 
\[ \iota: \calO \rightarrow \text{End} \,A \otimes \mathbb{Z}_p.\] 
    \item A $\mathbb{Q}$-homogeneous principal polarization $\bar{\lambda}$ of the $\mathcal{L}$-set $A$.
    \item A $K^p$-level structure
    \[
\bar{\eta} : H_1 (A, {\mathbb A}_{f}^p) \simeq W \otimes  {\mathbb A}_{f}^p \, \text{ mod} \, K^p
    \]
which respects the bilinear forms on both sides up to a constant in $({\mathbb A}_{f}^p)^{\times}$ (see loc. cit. for
details).\\
The set $A$ should satisfy the determinant condition (i) of loc. cit.
\end{enumerate}

For the definitions of the terms employed here we refer to loc.cit., 6.3–6.8 and \cite[\S 3]{P}. The functor $\mathcal{A}^{\rm naive}_{\mathbf{K}}$ is representable by a quasi-projective scheme over $\mathcal{O}$. Since the Hasse principle is satisfied for the unitary group, we can see as in loc. cit. that there is a natural isomorphism
\[
\mathcal{A}^{\rm naive}_{\mathbf{K}} \otimes_{\calO} E_v = {\rm Sh}_{\mathbf{K}}(G, X_h)\otimes_{E} E_v.
\]
The naive integral model $\mathcal{A}^{\rm naive}_{\mathbf{K}}$ is not flat. Denote by $ \mathcal{A}^{\rm flat}_{\mathbf{K}} $ the flat closure of ${\rm Sh}_{\mathbf{K}}(G, X_h)\otimes_{E} E_v$ in $ \mathcal{A}^{\rm naive}_{\mathbf{K}}$. One can now consider a variation of the moduli of abelian schemes $\mathcal{A}^{\rm spl}_{\mathbf{K}}$ by adding in the moduli problem an additional subspace in the Hodge filtration $ {\rm Fil}^0 (A) \subset H_{dR}^1(A)$ of the universal abelian variety $A$ with some restricting properties. A point of $\mathcal{A}^{\rm spl}_{\mathbf{K}}$ with values in the $O_{K_1}$-scheme $S$ is the isomorphism class of the following set of data $(A,\iota, \bar{\lambda}, \bar{\eta},\calG) $ such that
\begin{enumerate}
\item[(a).] The quadruple $(A,\iota,\bar{\lambda}, \bar{\eta})$ is an object of  $\mathcal{A}^{\rm naive}_{\mathbf{K}}(S)$.
\item[(b).]	The subspace $\calG$ satisfying $ \calG \subset {\rm Fil}^0 (A) \subset H_{dR}^1(A)$ of rank $s$, with 
\begin{equation}\label{eq 803}
O_{K_1} ~\text{acts by $\sigma_1$ on $\calG$, and by $\sigma_2$ on ${\rm Fil}^0 (A)/\calG$},
\end{equation}
where $\Gal(K_1/\QQ_p)=\{\sigma_1=id, \sigma_2\}$.
\item[(c).] The rank of $(\iota(\pi)+\pi)$ on $ {\rm Fil}^0 (A)$ has the same parity as $s$.
\end{enumerate}

\begin{Remarks}{\rm 
\begin{enumerate}
    \item We call condition (b) the {\it splitting condition} and (c) the {\it spin condition}. Condition (c) imitates the definition of the spin condition (4') for the corresponding splitting model (see Definition \ref{DefSplSpin}); we refer the reader to \S \ref{spl} where the spin condition is defined and discussed for the splitting model.
    \item We note that in the self-dual case, $I= \{0\}$, the conditions (a) and (b) are enough to give you a flat integral model; see \cite{BH} and \cite{Zac1}. However, this does not hold in our case, the $\pi$-modular case i.e. where $n$ is even and $I= \{m\}$, as shown in \cite[\S 5, \S 7]{ZacZhao1}.  
\end{enumerate}
}\end{Remarks}

There is a forgetful morphism
\[
\tau_1 :   \mathcal{A}^{\rm spl}_{\mathbf{K}} \longrightarrow \mathcal{A}^{\rm naive}_{\mathbf{K}}\otimes_{\mathcal{O}} O_{K_1}
\]
defined by $(A,\iota, \bar{\lambda}, \bar{\eta},\calG) \mapsto (A,\iota, \bar{\lambda}, \bar{\eta})$. In this paper, we will prove that $\mathcal{A}^{\rm spl}_{\mathbf{K}}$ is flat for any signature $(r,s)$. Thus, the $\mathcal{A}^{\rm flat}_{\mathbf{K}}$ is the scheme-theoretic image of $\tau_1$.

\section{Splitting models of unitary Shimura varieties}\label{Splitting}

\subsection{Local model diagrams}\label{localdiagram}

The singularities of $\mathcal{A}^{\rm naive}_{\mathbf{K}}$ are modeled by the {\it naive local model} ${\rm M}^{\rm naive}$ in \cite{RZbook}. As is explained in \cite{P}, the naive local model  is connected to the moduli scheme $\mathcal{A}^{\rm naive}_{\mathbf{K}}$ via the local model diagram 
\[
\mathcal{A}^{\rm naive}_{\mathbf{K}} \ \xleftarrow{\psi_1} \Tilde{\mathcal{A}}^{\rm naive}_{\mathbf{K}} (G,X_h) \xrightarrow{\psi_2} {\rm M}^{\rm naive}
\]
where the morphism $\psi_1$ is a $\scrG$-torsor and $\psi_2$ is a smooth and $\scrG$-equivariant morphism. Therefore, there is a relatively representable smooth morphism
 \[
 \mathcal{A}^{\rm naive}_{\mathbf{K}} \to [\scrG \backslash  {\rm M}^{\rm naive}]
 \]
where the target is the quotient algebraic stack. The closed subscheme $ \mathcal{A}^{\rm flat}_{\mathbf{K}} $ of $ \mathcal{A}^{\rm naive}_{\mathbf{K}}$ is a linear modification of $ \mathcal{A}^{\rm naive}_{\mathbf{K}}$ in the sense of \cite[\S 2]{P}. In particular, as in \cite{PR}, there is a relatively representable smooth morphism of relative dimension ${\rm dim} (G)$,
\[\mathcal{A}^{\rm flat}_{\mathbf{K}} \to [\scrG \backslash \Mloc].\]
This of course implies that $\mathcal{A}^{\rm flat}_{\mathbf{K}}$ is \'etale locally isomorphic to the {\it local model} $\Mloc$.

Similarly, $\mathcal{A}^{\rm spl}_{\mathbf{K}}$ is a linear modification of $ \mathcal{A}^{\rm naive}_{\mathbf{K}} \otimes_{\calO}O_{K_1}$ and has the same \'etale local structure as the {\it splitting model} $\Mspl$ (see also \cite[\S 15]{PR2}). 
In Section \ref{Flatness}, we will show that $\Mspl$ is flat for any signature $(r,s)$ which in turn implies that $\mathcal{A}^{\rm spl}_{\mathbf{K}}$ is also flat.

\subsection{Local Models and Variants}\label{spl}  In this section, we will give the explicit definitions of the local models and their variants that we are interested in. 

We use the notation of \cite{PR}. Let $F_0$ be a complete discretely valued field with ring of integers $O_{F_0}$, perfect residue field $k$ of characteristic $\neq 2$, and uniformizer $\pi_0$. Let $F/F_0$ be a ramified quadratic extension and $\pi \in F$ a uniformizer with $\pi^2 = \pi_0$.  Let $V$ be a $F$-vector space of dimension $n =2m> 3$ and let 
\[
\phi: V \times V \rightarrow F
\]
be an $F/F_0$-hermitian form. We assume that $\phi$ is split. This means that there exists a basis $e_1, \dots , e_n$ of $V$ such that 
\[
\phi(e_i,e_{n+1-j}) = \delta_{i,j} \quad \text{for  all} \quad i,j = 1, \dots, n.
\]
We attach to $\phi$ the respective symmetric $F_0$-bilinear form $(\ ,\ ): V \times V \rightarrow F_0$ given by 
\[
( x, y ) =  \frac{1}{2}\text{Tr}_{F /F_0} (\phi(x, y)).
\]
For any $O_F$-lattice $\Lambda$ in $V$, we denote by
\[
\Lambda^\vee = \{v \in V | ( v, \Lambda ) \subset O_{F_0} \}
\]
the dual lattice with respect to the symmetric form. The uniformizing element $\pi$ induces a $O_{F_0}$- linear mapping on $\Lambda$ which we denote by $t$. Similar to \S \ref{Int}, we can define the standard lattices
\[
\Lambda_i = \text{span}_{O_F} \{\pi^{-1}e_1, \dots, \pi^{-1}e_i, e_{i+1}, \dots, e_n\},
\]
for $i= 0, \cdots, n- 1$, and complete the $\Lambda_i$  to a self-dual periodic lattice chain $(\Lambda_j)_{j\in \ZZ}$ by including all the $\pi$-multiples, i.e., $\Lambda_{j} = \pi^{-k}\Lambda_i$ for $j \in \mathbb{Z}$ of the form $j = k \cdot n+i$ with $i =0, \cdots, n- 1$. 

 The symmetric form $ (\ , \ ) $ induce a perfect $O_{F_0}$-bilinear pairings
\begin{equation}\label{perfectpairing}
     \Lambda \times \Lambda^\vee \xrightarrow{ (\ , \ )} O_{F_0}
\end{equation}
for all $\Lambda$. Note that $\Lambda_i^\vee=\Lambda_{-i+n}$. Therefore, when $I=\{m\}$, $\Lambda_m$ is a self-dual lattice with respect to the form $ (\ , \ ) $.

Consider the unitary similitude group 
\[
G:=\{g\in \GL_{F}(V)\mid \phi(gx, gy)=c(g)\phi(x,y), \quad c(g)\in F_0^\times \},
\]
and the cocharacter $\mu_{r,s}=(1^{(s)}, 0^{(r)},1)$ of $D\times \GG_m\subset \GL_{n,F}\times \GG_{m,F}\simeq G\otimes F$, where $D$ is the standard maximal torus of diagonal matrices in $\GL_n$. (See \cite{Sm2} for more details.) We denote by $E$ the reflex field of $\{ \mu_{r,s}\}$; then $E = F_0$ if $r = s$ and $E = F$ otherwise (see \cite[\S 1.1]{PR}). We set $O := O_E$. By fixing the parahoric subgroup $K_p=P_{\{m\}}$, the cocharacter $\mu=\mu_{r,s}$, we can now define the naive local model ${\rm M}^{\rm naive}$:

\begin{Definition}\label{naivelocal}
${\rm M}^{\rm naive}$ is the functor which associates to each scheme $S$
over $\Spec O$ the set of subsheaves $\mathcal{F}_m$ of $O \otimes \mathcal{O}_S $-modules of $ \Lambda_m \otimes \mathcal{O}_S  $
such that
\begin{enumerate}
    \item $\mathcal{F}_m$ as an $\mathcal{O}_S $-module is Zariski locally on $S$ a direct summand of rank $n$;
    \item $\mathcal{F}_m$ is totally isotropic for $( \,
, \, ) \otimes \mathcal{O}_S$;
    \item (Kottwitz condition) $\text{char}_{t |  \mathcal{F}_m } (T)= (T + \pi)^r(T - \pi)^s $.
\end{enumerate}	
\end{Definition}

In \cite[\S 1.5.1]{PR}, the authors define the naive local model $\rmM^{\rm naive}$ that sends each $O$-scheme $S$ to the families of $O\otimes \calO_S$-modules $(\calF_i\subset \Lambda_i\otimes\calO_S)_{i \in n\mathbb{Z}\pm I}$ that satisfy the conditions (a)-(d) of loc. cit. Here our conditions (1)-(3) are equivalent to the conditions (a)-(d). (See \cite[\S 3]{ZacZhao1} for more details).

The functor ${\rm M}^{\rm naive}$ is represented by a closed subscheme, which we again
denote ${\rm M}^{\rm naive}$, of $\Gr(n, 2n) \otimes \Spec O$; hence ${\rm M}^{\rm naive}$ is a projective $ O$-scheme. (Here we denote by $\Gr(n, d)$ the Grassmannian scheme parameterizing locally direct summands of rank $n$ of a free module of rank $d$.) Also, ${\rm M}^{\rm naive}$ supports an action of $\scrG$. 

\begin{Proposition}\label{notflat}
We have 
\[
{\rm M}^{\rm naive} \otimes_{ O} E \cong \Gr(s,n)\otimes E.
\]
In particular, the generic fiber of ${\rm M}^{\rm naive}$ is smooth and geometrically irreducible of dimension $rs$.
\end{Proposition}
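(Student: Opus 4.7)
My plan is to construct an explicit natural isomorphism $\mathrm{M}^{\mathrm{naive}} \otimes_O E \xrightarrow{\sim} \Gr(s,n)\otimes E$. The guiding principle is that on the generic fiber, $\pi_0$ becomes invertible and the operator $t$ (the action of $\pi \in O_F$) acquires two distinct eigenvalues $\pm\pi$, which splits the moduli problem into something transparent.

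First I would work over $F$ (base-changing along the trivial or quadratic extension $F/E$). For any $F$-algebra $R$, the étale $R$-algebra $O_F \otimes_{O_{F_0}} R \cong R[x]/(x^2 - \pi_0)$ splits as $R \oplus R$ via $x \mapsto (\pi,-\pi)$ (using $2\pi \in R^\times$), and consequently $\Lambda_m \otimes_{O_{F_0}} R$ decomposes as $M_+ \oplus M_-$, where $M_\pm$ is the $\pm\pi$-eigenspace for $t$; each is free of rank $n$ over $R$. Any $O_F \otimes \mathcal{O}_S$-submodule $\mathcal{F}_m$ is automatically $t$-stable, hence splits as $\mathcal{F}_m^+ \oplus \mathcal{F}_m^-$, and the Kottwitz condition $\mathrm{char}_{t|\mathcal{F}_m}(T) = (T-\pi)^s(T+\pi)^r$ forces $\mathcal{F}_m^+$ and $\mathcal{F}_m^-$ to be locally free direct summands of $M_+$ and $M_-$ of ranks $s$ and $r$ respectively.

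Next I would exploit the symmetric form. A direct calculation from $\phi(\pi v, w) = -\phi(v, \pi w)$ shows that $t$ is anti-self-adjoint for $(\ ,\ )$, so both $M_+$ and $M_-$ are totally isotropic and $(\ ,\ )$ induces a perfect $R$-bilinear pairing $M_+ \times M_- \to R$. Total isotropy of $\mathcal{F}_m$ then gives $\mathcal{F}_m^- \subseteq (\mathcal{F}_m^+)^\perp$, and the rank identity $s+r = n$ upgrades this to the equality $\mathcal{F}_m^- = (\mathcal{F}_m^+)^\perp$. Hence $\mathcal{F}_m$ is uniquely determined by $\mathcal{F}_m^+$, an arbitrary locally free rank-$s$ direct summand of $M_+ \cong R^n$. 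This yields a natural isomorphism $\mathrm{M}^{\mathrm{naive}} \otimes_O F \xrightarrow{\sim} \Gr(s,n)\otimes F$.

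When $E=F$ (i.e.\ $r\neq s$) this already proves the claim; when $E = F_0$ (i.e.\ $r=s$), the remaining step is to descend along the quadratic Galois extension $F/F_0$, which works because the construction is natural and equivariant for the $G$-action (equivalently, one identifies the generic fiber with the partial flag variety $G/P_{\mu_{r,s}}$ over $E$). The smoothness, geometric irreducibility and dimension $s(n-s) = rs$ then follow from the corresponding properties of $\Gr(s,n)$. The main technical subtlety is this descent in the $r=s$ case, because the eigendecomposition $M_+ \oplus M_-$ itself is only rational over $F$; what saves us is that the universal $\mathcal{F}_m$ is $F_0$-rational, so one only needs to check that the identification intertwines the tautological Galois actions on both sides.
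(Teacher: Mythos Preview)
Your argument is correct and is precisely the standard computation; the paper itself gives no argument at all beyond a citation to \cite[\S 1.5.3]{PR}, and what you have written is essentially a fleshed-out version of that reference. The only soft spot is the descent in the $r=s$ case, which you flag yourself: the eigenspace decomposition $M_+\oplus M_-$ is genuinely not $F_0$-rational (Galois swaps the two pieces), so the map $\calF_m\mapsto\calF_m^+$ does not descend na\"ively, and your fallback to the identification with $G/P_{\mu_{r,s}}$ over $E$ is the right way to finish.
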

\begin{proof}
See \cite[\S 1.5.3]{PR}. 
\end{proof}

It was observed by \cite{P} that ${\rm M}^{\rm naive}$ fails to be flat in general. The main reason is that $F/F_0$ is a ramified extension and $t=\pi\otimes 1\in O_F\otimes_{O_{F_0}}\calO_S$ is nilpotent, such that the Kottwitz condition fails to impose a condition on the reduced special fiber. Let $\Mloc$ be the scheme theoretic closure of the generic fiber $ {\rm M}^{\rm naive} \otimes_{O} E$ in ${\rm M}^{\rm naive}$. The scheme $\Mloc$ is called the \textit{local model}. We have closed immersions of projective schemes
$
\Mloc \subset {\rm M}^{\rm naive}
$
which are equalities on generic fibers (see \cite[\S 1.5]{PR} for more details).
\begin{Proposition}
a) For any signature $(r, s)$, the special fiber of $\Mloc$ is integral and normal and has only rational singularities.

b) For $(r, s)= (n-1,1)$, $\Mloc$ is smooth. 
\end{Proposition}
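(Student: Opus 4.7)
The plan is to reduce both parts to established results on local models of ramified unitary groups, so the argument is mainly a matter of identifying the right reference for each claim.

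For part (a), I would invoke the identification recalled just above the proposition (from \cite[\S 8.2.4]{PRS}) of $\Mloc$ with the Pappas--Zhu canonical local model $\mathbb{M}_{P_{\{m\}}}(G,\mu_{r,s})$. The cocharacter $\mu_{r,s}$ is minuscule and $P_{\{m\}}$ is a special maximal parahoric, so the geometric special fiber of $\mathbb{M}_{P_{\{m\}}}(G,\mu_{r,s})$ embeds in the twisted affine flag variety $\mathcal{F}\ell_{\scrG}$ as the closure of the single Schubert cell indexed by the translation element $t_{\mu_{r,s}}$: the $\mu$-admissible locus collapses to this one orbit because $\mu_{r,s}$ is minuscule and $P_{\{m\}}$ is special. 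Integrality of the special fiber is then immediate from irreducibility of a single Schubert variety together with its reducedness (this is where $p\neq 2$ is used), and normality together with the rational singularities claim follow from Faltings' theorem on Schubert varieties in twisted affine flag varieties, as worked out for the ramified unitary case by Pappas--Rapoport.

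For part (b), the signature $(n-1,1)$ is low enough that a direct moduli-theoretic analysis works. The Kottwitz condition in Definition \ref{naivelocal} becomes $\mathrm{char}_{t\mid \mathcal{F}_m}(T)=(T+\pi)^{n-1}(T-\pi)$, so on each geometric point $\mathcal{F}_m$ is determined by a single line $L\subset\Lambda_m\otimes k$ on which $t$ acts by $-\pi$, and self-duality under the perfect pairing (\ref{perfectpairing}) plus the direct-summand condition pin down the rest of $\mathcal{F}_m$ uniquely from $L$. This exhibits $\Mloc$ as (an open subscheme of) a projective bundle over $\Spec O$, whence smoothness of relative dimension $n-1$. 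Alternatively, one appeals directly to Richarz \cite{Richarz}, who established smoothness in precisely this case.

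The main obstacle I anticipate lies in part (a): verifying that the $\mu$-admissible locus for the triple $(G,\mu_{r,s},P_{\{m\}})$ with $n=2m$ even really is a single Schubert variety (and not a union indexed by connected components of the parahoric flag variety, which is a genuine concern in the even orthogonal/unitary setting), and that the Pappas--Zhu identification of $\Mloc$ with $\mathbb{M}_{P_{\{m\}}}(G,\mu_{r,s})$ applies in this ramified even-dimensional $\pi$-modular setting. Once those two inputs are secured, irreducibility of $\overline{\Mloc}$ and the Faltings/Pappas--Rapoport normality and rational singularities statements give everything else essentially for free.
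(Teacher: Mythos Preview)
The paper's own proof is a bare citation to \cite[Theorem 5.1]{PR} and \cite[\S 5.3]{PR}; both statements are established results of Pappas--Rapoport for precisely this $\pi$-modular level, so no new argument is needed.

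Your outline for (a) is roughly what underlies \cite{PR}, but one step is misstated. The $\mu$-admissible set ${\rm Adm}_m(\mu_{r,s})$ does \emph{not} collapse to a single orbit merely because $\mu$ is minuscule and $P_{\{m\}}$ is special: as computed later in the paper (and in \cite{PR}), it is the chain $\{\lambda_s>\lambda_{s-2}>\cdots\}$, with several elements when $s\geq 2$. What makes the special fiber a single Schubert variety is that this chain is \emph{totally ordered}, so the union of the corresponding cells is the closure of the top one. That total-ordering is a genuine calculation specific to this case, not a formal consequence of ``minuscule $+$ special maximal''; you correctly flag it as the crux, and the reference for it is \cite{PR} rather than the Pappas--Zhu machinery (which in any case postdates \cite{PR} and, for ramified unitary groups, ultimately relies on the same Schubert-variety input).

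For (b) both of your alternatives have problems. Richarz \cite{Richarz} treats the almost $\pi$-modular case $n=2m+1$, $I=\{m\}$, not the even-dimensional $\pi$-modular case at hand, so that citation does not apply. Your direct argument also fails as written: the eigen-line decomposition from the Kottwitz condition $(T+\pi)^{n-1}(T-\pi)$ is only meaningful on the generic fiber where $t$ is semisimple; on the special fiber $t$ is nilpotent and no such ``single line on which $t$ acts by $-\pi$'' picture is available. The actual smoothness for $(n-1,1)$ in the $\pi$-modular case is established in \cite[\S 5.3]{PR} by an explicit analysis of the wedge/spin conditions, which is what the paper cites.
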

\begin{proof}
See \cite[Theorem 5.1]{PR} and \cite[\S 5.3]{PR}. 
\end{proof} 

Recall that $t=\pi\otimes 1, \pi=1\otimes \pi$ in $ O_F\otimes_{O_{F_0}}\calO_S$ and set $ W= \wedge^{n}_F (V\otimes_{F_0}F).$ The symmetric form $( \, , \, )$ splits over $V$ and thus there is a canonical decomposition
\[
W = W_1 \oplus W_{-1}
\]
of $W $ as an $SO_{2n}(F)$-representation (see \cite[\S 7]{RSZ}). For a standard lattice $\Lambda_i$ in $V$, we set $W(\Lambda_i)=\wedge^n(\Lambda_i\otimes_{O_{F_0}}O_F )$ and $W(\Lambda_i)_{\pm 1 } = W_{\pm 1} \cap W(\Lambda).$ For an $O_F$-scheme $S$ and $\epsilon \in \{\pm1\}$, we set 
\[
L_{i,\epsilon}(S)=\text{im}[ W(\Lambda_i)_{\epsilon}\otimes_{O_F} \mathcal{O}_S  \longrightarrow  W(\Lambda)\otimes_{O_F} \mathcal{O}_S ].
\]

Now we are ready to define the splitting model $\Mspl$. 

\begin{Definition}\label{DefSplSpin}
Let $\Mspl$ be the moduli space over $O_F$, whose points of an $O_F$-scheme $S$ are the set of pairs $(\calF_m,\calG_m)$, where $\calF_m$ (resp. $\calG_m$) are Zariski locally $\calO_S$-direct summands of rank $n$ (resp. $s$), such that
\begin{enumerate}
    \item $ 0 \subset \calG_m \subset \calF_m \subset \Lambda_m  \otimes \mathcal{O}_S$;
    \item $\calF_m = \calF_m^{\bot}$, i.e. $\calF_m$ is totally isotropic for $( \,
, \, ) \otimes \mathcal{O}_S$;
    \item ({\rm splitting condition}) $(t + \pi) \calF_m \subset \calG_m,\quad (t - \pi)\calG_m = 0$; 
    \item ({\rm spin condition}). The line bundle $\wedge^n \calF_m \subset W(\Lambda_m)\otimes_{O_F} \mathcal{O}_S$ is contained in $ L_{m,(-1)^s}(S)$. 
\end{enumerate}
\end{Definition}

It is easy to see that condition (3) is the equivalent condition (b) in $\mathcal{A}^{\rm spl}_{\mathbf{K}}$. Also, in our setting, from \cite[Remark 9.9]{RSZ} (see also \cite[\S 7]{ZacZhao1}), we get that the spin condition can be characterized as the following condition:

(4')  The rank of $(t + \pi)$ on $ \calF_m$ has the same parity as $s$.

(Note that, in \cite{RSZ},  the authors assume $r\leq s$.) 
The functor $\Mspl$ is represented by a projective $O_F $-scheme and there is a $\scrG$-equivariant projective morphism 
\begin{equation}\label{formor}
\tau : \Mspl \rightarrow  {\rm M}^{\rm naive}\otimes_{O} O_F	
\end{equation}
which is given by $(\mathcal{F}_m,\mathcal{G}_m) \mapsto \mathcal{F}_m$ on $S$-valued points. (Indeed, we can easily see, as in \cite[Definition 4.1]{Kr}, that $\tau$ is well defined.) The morphism $\tau$ induces an isomorphism on the generic fibers (see \cite[Remark 4.2]{Kr}).

In \cite{ZacZhao1}, the authors showed that $\Mspl$ is smooth for the signature $(n-1,1) $ and has semi-stable reduction for the signatures $(n-2,2)$ and $(n-3,3)$. In our paper, we will show that 

\begin{Theorem}\label{mainflat}
For any signature $(r,s)$, the splitting model $\Mspl$ is flat, normal and Cohen-Macaulay over $O_F$. The special fiber $\Mspl\otimes k$ is reduced and has $\frac{s}{2}$ irreducible components when $s$ is even, and $\frac{s+1}{2}$ irreducible components when $s$ is odd.
\end{Theorem}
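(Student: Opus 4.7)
The overall plan is to reduce everything to the local model $\Mspl$ and then to its special fiber, and to analyse the latter through the combinatorial stratification $\{X_{h,l}\}$ announced in the introduction. By the local model diagram (\ref{LMdiagramRegIntro}) and smoothness of $\scrG$, the scheme $\calA^{\rm spl}_{\mathbf{K}}$ is \'etale-locally isomorphic to $\Mspl$, so flatness, normality, Cohen--Macaulayness and reducedness of the special fiber can all be checked on $\Mspl$. The generic fiber is smooth of dimension $rs$ by Proposition \ref{notflat}, so the whole problem is the geometry of the special fiber $\overline{\Mspl}:=\Mspl\otimes k$. I would prove the two propositions from the introduction in order and then deduce the theorem.

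For the stratification, I would first show that the two invariants $h:=\dim_k(t\calF_m)$ and $l:=\dim_k(\calG_m\cap\calG'_m)$ (with $\calG'_m$ the complementary rank-$r$ summand obtained from the polarization as in Section \ref{Splitting}) satisfy $0\leq h\leq l\leq s$ with $h\equiv l\equiv s\pmod 2$. The bound $h\leq l$ follows because, in characteristic $p$, the splitting condition $(t+\pi)\calF_m\subset\calG_m$ (with $\pi=0$) becomes $t\calF_m\subset\calG_m$, and the self-duality of $\calF_m$ for the pairing then forces $t\calF_m\subset\calG'_m$ as well. The parity of $h$ is the spin condition (4$'$); the parity of $l$ is controlled by the induced symplectic form on $\calF_m/t\calF_m$, whose radical contains $(\calG_m\cap\calG'_m)/t\calF_m$. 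To compute $\dim X_{h,l}$ and show smoothness I would pass to affine charts around an explicit standard point of $X_{h,l}$, expressing $(\calF_m,\calG_m)$ by block matrices in a basis adapted to the flag $t\calF_m\subset\calG_m\cap\calG'_m\subset\calG_m\subset\calF_m$. The self-duality conditions should impose exactly $\tfrac{(l-h)(l-h-1)}{2}$ independent symmetry relations among the free entries, producing a smooth affine space of the predicted dimension; this step follows the template of \cite{BH}, but with careful bookkeeping of the spin condition and of the extra subspace $\calG_m$.

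For the closure relations $\overline{X_{h,l}}=\coprod_{h'\leq h,\,l'\geq l}X_{h',l'}$ (with the same parity constraints), upper-semicontinuity of $\dim(t\calF_m)$ and lower-semicontinuity of $\dim(\calG_m\cap\calG'_m)$ under specialization give the inclusion $\supseteq$; for $\subseteq$ I would exhibit, inside the chart of a given point of $X_{h',l'}$, explicit one-parameter families degenerating a point of $X_{h,l}$ to it, for every admissible pair with $h\geq h'$ and $l\leq l'$. Since $X_{h,l}$ has top dimension $rs$ precisely when $l=h$, the irreducible components of $\overline{\Mspl}$ are the closures of $X_{h,h}$ for $h\equiv s\pmod 2$, $0\leq h\leq s$, yielding the component count in the theorem.

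Finally, I would deduce the global properties. Reducedness of $\overline{\Mspl}$ follows because each irreducible component contains the dense open smooth stratum $X_{h,h}$, ruling out embedded components. The special fiber is then equidimensional of dimension $rs$, matching the irreducible generic fiber, so flatness of $\Mspl$ over $O_F$ follows from miracle flatness once Cohen--Macaulayness is in hand. Normality will be deduced from Serre's $(R_1)+(S_2)$: the only codimension-one strata are those with $l-h=2$, along which the stratification proposition gives smoothness, while $(S_2)$ is part of Cohen--Macaulayness. \textbf{The principal obstacle is the Cohen--Macaulay property}, since neither the stratification nor smoothness in codimension one implies it. I would tackle it locally in the charts above, by showing that the ideal cutting out $\Mspl$ inside the smooth ambient Grassmannian-flag scheme is, at each closed point, generated by a regular sequence of the expected length, or by identifying each chart (up to a smooth factor) with a determinantal variety known to be Cohen--Macaulay. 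A more geometric fallback is to construct an explicit partial resolution along the lines of \cite{BH}, check the vanishing of higher direct images of the structure sheaf, and conclude rational, hence Cohen--Macaulay, singularities.
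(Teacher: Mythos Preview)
Your stratification and closure-relation outline matches the paper closely, and the component count is correct. The substantive gaps are in the deduction of reducedness, $(R_1)$, and Cohen--Macaulayness. First, the argument ``each irreducible component contains the dense open smooth stratum $X_{h,h}$, ruling out embedded components'' is invalid: a dense smooth open gives only $(R_0)$; it does \emph{not} preclude embedded primes supported on the closed strata. Reducedness needs $(R_0)+(S_1)$, but you have not yet established $(S_1)$ (you only invoke Cohen--Macaulayness later, and your flatness argument via miracle flatness presupposes CM, so the order is circular). Second, your $(R_1)$ claim is wrong as stated: the proposition says the stratum $X_{h,l}$ is smooth, not that $\overline{\Mspl}$ is regular along it; in fact the paper proves the smooth locus of the special fiber is exactly $\bigcup_h X_{h,h}$, so the special fiber is singular along every codimension-one stratum $X_{h,h+2}$. (Normality of the total space $\Mspl$ follows instead from flat $+$ smooth generic fiber $+$ reduced special fiber, once those are in hand.)

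The paper does not try to extract reducedness or CM from the stratification. It computes, at a point of the closed stratum $X_{\varepsilon,s}$, an explicit affine chart and simplifies it (up to a smooth factor) to
\[
\Spec\, W_{\mathcal O}(k)[T,W]\big/\bigl(T+T^{t},\ W+W^{t},\ TW-2\pi I_{s'}\bigr),
\]
a Pfaffian-type variety in two skew-symmetric matrices. Reducedness of the special fiber $TW=0$ is then a genuine commutative-algebra lemma proved via the standard-tableaux basis for Pfaffian ideals (Baetica, J\'ozefiak--Pragacz); flatness is obtained by \emph{topological} flatness, lifting $\GL_n$-orbit representatives explicitly to characteristic zero; and CM comes from identifying the special fiber with $S(I)/(a)$ for $I$ the ideal of submaximal Pfaffians, using that the Rees algebra $\calR(I)\cong S(I)$ is Cohen--Macaulay. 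In other words, your ``fallback'' of identifying a chart with a known Pfaffian/determinantal variety is not a fallback but the heart of the proof, and it has to be done before, not after, the reducedness and normality steps.
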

The proof of the above theorem will be carried out in \S \ref{Closure} and \S \ref{Flatness}. In particular, we give a description of the irreducible components
of the special fiber in \S \ref{Closure} and we prove the rest of statements in \S \ref{Flatness}.

\begin{Corollary}\label{flatintegral}
The integral model $\mathcal{A}^{\rm spl}_{\mathbf{K}}$ is a flat normal, and Cohen-Macaulay scheme with reduced special fiber.
\end{Corollary}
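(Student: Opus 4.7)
The plan is to deduce the corollary immediately from Theorem~\ref{mainflat} via the local model diagram recorded in \eqref{LMdiagramRegIntro} and discussed in \S\ref{localdiagram} and \S\ref{spl}. Concretely, $\mathcal{A}^{\rm spl}_{\mathbf{K}}$ is a linear modification of $\mathcal{A}^{\rm naive}_{\mathbf{K}}\otimes_{\calO}O_{K_1}$ in the sense of \cite[\S 2]{P}, and fits into a diagram
\[
\mathcal{A}^{\rm spl}_{\mathbf{K}} \xleftarrow{\pi^{\rm reg}_{\mathbf{K}}} \wti{\mathcal{A}}^{\rm spl}_{\mathbf{K}}(G,X) \xrightarrow{q^{\rm reg}_{\mathbf{K}}} \Mspl
\]
where $\pi^{\rm reg}_{\mathbf{K}}$ is a $\scrG$-torsor (in particular smooth and surjective) and $q^{\rm reg}_{\mathbf{K}}$ is smooth and $\scrG$-equivariant. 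The upshot, as already indicated after the Main Theorem, is that every point of $\mathcal{A}^{\rm spl}_{\mathbf{K}}$ admits an étale neighbourhood in $\wti{\mathcal{A}}^{\rm spl}_{\mathbf{K}}(G,X)$ which is also étale over $\Mspl$, so the two schemes share their étale local geometry.

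The four properties asserted by the corollary---flatness over $O_{K_1}$, normality, Cohen--Macaulayness, and reducedness of the special fiber---are all étale-local on the source and target and descend along smooth surjective morphisms. Hence, once Theorem~\ref{mainflat} has been established in \S\ref{Closure} and \S\ref{Flatness}, the properties can first be pulled back along the smooth map $q^{\rm reg}_{\mathbf{K}}$ to the total space $\wti{\mathcal{A}}^{\rm spl}_{\mathbf{K}}(G,X)$: smoothness preserves flatness over the base, normality, and Cohen--Macaulayness, and the special fiber of $\wti{\mathcal{A}}^{\rm spl}_{\mathbf{K}}(G,X)$ is the smooth pullback of that of $\Mspl$, hence reduced. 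They then descend along the $\scrG$-torsor $\pi^{\rm reg}_{\mathbf{K}}$, again by faithfully flat descent for these local properties, yielding the corresponding statements for $\mathcal{A}^{\rm spl}_{\mathbf{K}}$.

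There is no genuine obstacle in this argument: all of the substantive work is absorbed into Theorem~\ref{mainflat}, and the remaining step is the standard transfer of étale-local properties through a local model diagram. The only verification to perform is that the relevant base fields match, but the reflex field computations in \S\ref{UnitaryShimura}--\S\ref{Int} ensure that $\Mspl$ and $\mathcal{A}^{\rm spl}_{\mathbf{K}}$ indeed both live over (the ring of integers of) $K_1\cong F$, so the comparison is meaningful and the corollary follows.
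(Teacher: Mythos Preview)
Your argument is correct and is exactly the reasoning the paper intends: the corollary is stated without explicit proof because, as explained after the Main Theorem and in \S\ref{localdiagram}, $\mathcal{A}^{\rm spl}_{\mathbf{K}}$ and $\Mspl$ share the same \'etale-local structure via the local model diagram, so the properties established in Theorem~\ref{mainflat} transfer immediately. You have simply spelled out this standard descent/pullback step in full.
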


From the above we deduce that the $\scrG$-equivariant projective morphism $ \tau$ factors through $ \Mloc \otimes_{O} O_F \subset {\rm M}^{\rm naive}\otimes_{O} O_F	$ because of flatness; the generic
fiber of all of these is the same.

\subsection{Geometry of the Special fiber of $\Mspl$}
In this section, we will show that the special fiber of $\Mspl$ is stratified by an explicit poset with a combinatorial description. Our work is similar to the work of \cite{BH}, except that we consider the special maximal parahoric $K_p=P_{\{m\}}$ instead of $P_{\{0\}}$.

Consider the standard lattice $\Lambda_m$:
\[
\begin{array}{ll}
\Lambda_m &= \text{span}_{O_F} \{\pi^{-1}e_1, \dots, \pi^{-1}e_m, e_{m+1}, \dots, e_n\}\\	
&=\text{span}_{O_{F_0}} \{\pi^{-1}e_1, \dots, \pi^{-1}e_m, e_{m+1}, \dots, e_n,e_1, \dots, e_m, \pi e_{m+1}, \dots, \pi e_n\}.
\end{array}
\]
The symmetric matrix of $(\ ,\ )$ in this basis is 
\[
\left[\ 
\begin{matrix}[c|c]
0_n & J_n  \\ \hline
-J_n & 0_n 
\end{matrix}\ \right], \,\,\, \text{where} \,\, J_n = \left[\ 
\begin{matrix}[c|c]
0_m & -H_m  \\ \hline
H_m & 0_m 
\end{matrix}\ \right]
\]
and $H_m$ is the unit antidiagonal matrix (of size $m$). Observe that $ J^2_n = -I_n.$ Also, since $ s \leq r$ and $ n =2m =r+s$ we get that $s\leq m$. It is easy to see that $((t\pm \pi)\Lambda_m)^\bot=(t\pm \pi)\Lambda_m$.	
From that, we have a  perfect pairing between $(t+ \pi)\Lambda_m$ and $\Lambda_m/(t+ \pi)\Lambda_m$. Note that the last lattice is isomorphic to $(t-\pi)\Lambda_m$ via the multiplication by $t-\pi$. Thus there is an induced pairing 
\[
\{\ , \ \}: (t+\pi)\Lambda_m\times (t-\pi)\Lambda_m\rightarrow O_{F_0}
\]
given by the formula $\{(t+\pi)x,(t-\pi)y\}=((t+\pi)x,y)$. We call $\{\ , \ \}$ the {\it modified pairing} and denote by $W^{\bot'}$ the dual space of $W\subset (t\pm \pi)\Lambda_m$. Note that $W^{\bot'} \subset~(t \mp~\pi) \Lambda_m$, and one has the formula $W^{\bot'} = (t \mp \pi) W^\bot$. \\
By using the modified pair, we can now measure the ``isotropic part" in $\calG_m$. Note that $\calG_m$ is included in $(t + \pi) \Lambda_m$, and is therefore totally isotropic for the original pairing.

\begin{Definition}\label{G2}
For any point $(\calF_m,\calG_m)\in \Mspl$, we define $\calG^{\bot'}_m\subset (t-\pi)\Lambda_m$ the dual space of $\calG_m$ under the modified pairing.
\end{Definition}

\begin{Lemma}\label{lm38}
We have $\calG^{\bot'}_m=((t+\pi)^{-1}\calG_m)^\bot=(t-\pi) \calG_m^\bot$	
\end{Lemma}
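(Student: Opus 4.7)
The backbone of the argument is the skew-symmetry relation $((t-\pi)u, v) = -(u, (t+\pi)v)$ for all $u, v \in \Lambda_m \otimes \calO_S$, which reduces to $(tu,v) + (u,tv) = 0$. This in turn follows from $\bar\pi = -\pi$ and the hermiticity of $\phi$: indeed $(tu,v) + (u,tv) = \frac{1}{2}\Tr_{F/F_0}((\pi + \bar\pi)\phi(u,v)) = 0$.

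I would first dispatch the equality $\calG_m^{\bot'} = (t-\pi)\calG_m^\bot$, which is essentially unpacking the definition of the modified pairing. An element $z = (t-\pi)y \in (t-\pi)\Lambda_m \otimes \calO_S$ lies in $\calG_m^{\bot'}$ iff for every $g = (t+\pi)x \in \calG_m$ one has $\{(t+\pi)x,(t-\pi)y\} = ((t+\pi)x, y) = (g,y) = 0$; since $\calG_m \subset (t+\pi)\Lambda_m \otimes \calO_S$ (a consequence of $(t-\pi)\calG_m = 0$ together with the identity $\ker(t-\pi) = (t+\pi)\Lambda_m\otimes\calO_S$ appearing below), every $g \in \calG_m$ really does admit such a presentation, and the condition becomes exactly $y \in \calG_m^\bot$.

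For the equality $((t+\pi)^{-1}\calG_m)^\bot = (t-\pi)\calG_m^\bot$, the inclusion $\supset$ is immediate from skew-symmetry: if $y \in \calG_m^\bot$ and $(t+\pi)w \in \calG_m$, then $((t-\pi)y, w) = -(y, (t+\pi)w) = 0$. For the reverse inclusion I would first establish the structural identity $\ker(t+\pi \mid \Lambda_m\otimes\calO_S) = (t-\pi)\Lambda_m\otimes\calO_S$, which reduces to an annihilator computation in $R := O_F\otimes_{O_{F_0}}\calO_S \cong \calO_S[t]/(t^2-\pi^2)$; writing a generic element of $R$ as $a_0 + a_1 t$ and expanding $(a_0+a_1t)(t+\pi) = 0$ forces $a_0 = -a_1\pi$, yielding $\mathrm{Ann}_R(t+\pi) = (t-\pi)$. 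Given this, any $z \in ((t+\pi)^{-1}\calG_m)^\bot$ is in particular orthogonal to $\ker(t+\pi) = (t-\pi)\Lambda_m\otimes\calO_S$; combined with skew-symmetry and the perfectness of $(\, ,\,)$, this forces $z \in (t-\pi)\Lambda_m\otimes\calO_S$. Writing $z = (t-\pi)y$, the defining condition on $z$ becomes $(y, (t+\pi)w) = 0$ for every $w \in (t+\pi)^{-1}\calG_m$, and since $\calG_m \subset (t+\pi)\Lambda_m\otimes\calO_S$ this is equivalent to $y \in \calG_m^\bot$. The only step with genuine content is the annihilator identity over an arbitrary $O_F$-algebra $\calO_S$; it is a one-line calculation, but it is the place where the ambient $O_F\otimes_{O_{F_0}}\calO_S$-module structure on $\Lambda_m\otimes\calO_S$ is actually used.
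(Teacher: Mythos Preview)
Your argument is correct. The first equality $\calG_m^{\bot'}=(t-\pi)\calG_m^\bot$ is handled exactly as in the paper, by unwinding the definition of the modified pairing together with the fact that $\calG_m\subset (t+\pi)\Lambda_m\otimes\calO_S$.

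For the second equality $((t+\pi)^{-1}\calG_m)^\bot=(t-\pi)\calG_m^\bot$ you take a different route. The paper does not verify the two inclusions directly; instead it observes that the formula $W^{\bot'}=(t\mp\pi)W^\bot$ (for $W\subset (t\pm\pi)\Lambda_m$) applied once more to $W=\calG_m^{\bot'}$ gives $\calG_m=(\calG_m^{\bot'})^{\bot'}=(t+\pi)(\calG_m^{\bot'})^\bot$, and then reads off $\calG_m^{\bot'}=((t+\pi)^{-1}\calG_m)^\bot$ from this. In other words, the paper leverages biduality for the perfect modified pairing, whereas you argue by the adjunction identity $((t-\pi)u,v)=-(u,(t+\pi)v)$ and the annihilator computation $\mathrm{Ann}_R(t+\pi)=(t-\pi)$ in $R=\calO_S[t]/(t^2-\pi^2)$. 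Your approach is more explicit and makes transparent exactly where the free $R$-module structure of $\Lambda_m\otimes\calO_S$ is used; the paper's is shorter but tacitly uses the same ingredients (the kernel identity is hidden in the passage from $\calG_m=(t+\pi)(\calG_m^{\bot'})^\bot$ to the stated preimage formula).
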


\begin{proof}
From the definition of the modified pairing, one gets $\calG^{\bot'}_m = (t-\pi) \calG_m^\bot$ since $\calG_m \subset (t+ \pi) \Lambda_m$. Now, the same relation applied to $\calG^{\bot'}_m$ instead of $\calG_m$ gives
$$\calG_m = (t + \pi) (\calG^{\bot'}_m)^\bot$$
which implies that $\calG^{\bot'}_m = ((t + \pi)^{-1} \calG_m)^\bot$.
\end{proof}

\begin{Proposition}\label{prop39}
 The sublattice $\calG_m^{\bot'}$ is locally free of rank $r$, and we have $\calG_m^{\bot'}\subset \calF_m$ such that
 \[
 (t-\pi)\calF_m\subset \calG_m^{\bot'}, \quad (t+\pi)\calG_m^{\bot'}=0
 \]	
\end{Proposition}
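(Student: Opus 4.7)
The plan is to exploit the perfect duality induced by the modified pairing $\{\ ,\ \}$. The key preliminary observation is the identity $\ker(t\mp\pi)=(t\pm\pi)\Lambda_m\otimes\calO_S$ as a locally free direct summand of rank $n$: one inclusion is immediate from $t^2=\pi^2$, and the other follows from the self-orthogonality $((t+\pi)\Lambda_m)^{\bot}=(t+\pi)\Lambda_m$ combined with the antisymmetry $(tx,y)=-(x,ty)$, which yields $(y,(t-\pi)x)=-((t+\pi)y,x)$, so $(t-\pi)x=0$ forces $x\in((t+\pi)\Lambda_m)^{\bot}=(t+\pi)\Lambda_m$.

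The central step is to show that $\calG_m$ is a locally free direct summand of $(t+\pi)\Lambda_m\otimes\calO_S$ with quotient locally free of rank $r$. Since $(t-\pi)\calG_m=0$, the preliminary identity gives $\calG_m\subset(t+\pi)\Lambda_m\otimes\calO_S$. The natural surjection of locally free sheaves
\[
\Lambda_m\otimes\calO_S/\calG_m \twoheadrightarrow \Lambda_m\otimes\calO_S/(t+\pi)\Lambda_m\otimes\calO_S
\]
(of ranks $2n-s$ and $n$) has kernel $(t+\pi)\Lambda_m\otimes\calO_S/\calG_m$, which is therefore locally free of rank $r$. Dualizing the resulting short exact sequence
\[
0\to \calG_m \to (t+\pi)\Lambda_m\otimes\calO_S \to (t+\pi)\Lambda_m\otimes\calO_S/\calG_m \to 0
\]
with respect to the perfect modified pairing (which identifies $((t+\pi)\Lambda_m\otimes\calO_S)^{*}=(t-\pi)\Lambda_m\otimes\calO_S$) produces an exact sequence
\[
0 \to \calG_m^{\bot'} \to (t-\pi)\Lambda_m\otimes\calO_S \to \calG_m^{*} \to 0,
\]
exhibiting $\calG_m^{\bot'}$ as a locally free direct summand of rank $r$.

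It then remains to verify the three containments. The vanishing $(t+\pi)\calG_m^{\bot'}=0$ is immediate from $\calG_m^{\bot'}\subset(t-\pi)\Lambda_m\otimes\calO_S=\ker(t+\pi)$. For $(t-\pi)\calF_m\subset\calG_m^{\bot'}$: writing $g=(t+\pi)y\in\calG_m$ and $x\in\calF_m$, the very definition of the modified pairing gives $\{g,(t-\pi)x\}=((t+\pi)y,x)=(g,x)$, which vanishes by the isotropy of $\calF_m$. Finally, $(t+\pi)\calF_m\subset\calG_m$ yields $\calF_m\subset(t+\pi)^{-1}\calG_m$, so Lemma~\ref{lm38} gives $\calG_m^{\bot'}=((t+\pi)^{-1}\calG_m)^{\bot}\subset\calF_m^{\bot}=\calF_m$. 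The main obstacle is the dualization step: the correct rank $r$ of $\calG_m^{\bot'}$ emerges globally (not only on the generic fiber) only because $\calG_m$ sits as a direct summand of $(t+\pi)\Lambda_m\otimes\calO_S$ with \emph{locally free} quotient, a property which would be unclear if one worked directly with the description $\calG_m^{\bot'}=(t-\pi)\calG_m^{\bot}$, since the image of a map of locally free sheaves need not be locally free on the locus where $\pi$ vanishes.
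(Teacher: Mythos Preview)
Your proof is correct and follows essentially the same route as the paper's: perfect duality for the modified pairing gives the rank, and the three containments are deduced from isotropy, the splitting condition, and Lemma~\ref{lm38}. The one noteworthy difference is that you explicitly verify that $\calG_m$ sits in $(t+\pi)\Lambda_m\otimes\calO_S$ as a direct summand with locally free quotient before dualizing, whereas the paper simply invokes perfectness of $\{\ ,\ \}$; your extra care here is well placed, since without that direct-summand property the orthogonal need not be locally free over an arbitrary base.
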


\begin{proof}
Since $\calG_m$ is locally free of rank $s$, and since the modified pairing is perfect between $(t+\pi) \Lambda_m$ and $(t-\pi)\Lambda_m$, one gets that $\calG_m^{\bot'}$ is locally free of rank $n-s = r$. The inclusion $\calG_m \subset \calF_m$ gives by duality $\calF_m \subset \calG_m^\bot = (t - \pi)^{-1} \calG_m^{\bot'}$, so that $(t-\pi)\calF_m\subset \calG_m^{\bot'}$.

The relation $(t + \pi) \calF_m \subset \calG_m$ implies $\calF_m \subset (t+\pi)^{-1} \calG_m$, and taking the orthogonal one gets the inclusion $\calG_m^{\bot'} \subset \calF_m$. Finally, since $\calG_m^{\bot'}$ is inside $(t - \pi) \Lambda_m$, one must have $(t + \pi) \calG_m^{\bot'} = 0$.
\end{proof}

In the rest of this section, we consider the special fiber of $\Mspl$. For any point $x=(\calF_m,\calG_m)\in \Mspl\otimes k$, note that $\calG_m$ (resp. $\calG_m^{\bot'}$) are in the lattice $t\Lambda_m$ with dimension $s$ (resp. $r$).

\begin{Definition}\label{defhl}
Let $x=(\calF_m,\calG_m)\in \Mspl\otimes k$. Set the integers $(h,l)$ as the dimension of $t\calF_m$ and $\calG_m\cap \calG_m^{\bot'}$ respectively.
\end{Definition}

\begin{Lemma}
We have $0\leq h\leq l\leq s$ and $l = s \mod 2$.
\end{Lemma}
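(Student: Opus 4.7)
My approach is to work in the special fiber, where $\pi=0$ in $\calO_S=k$, so that the splitting condition of Definition~\ref{DefSplSpin}(3) becomes $t\calF_m\subset \calG_m$ with $t\calG_m=0$, and the assertions of Proposition~\ref{prop39} become $t\calF_m\subset \calG_m^{\bot'}$ with $t\calG_m^{\bot'}=0$. In particular both $\calG_m$ and $\calG_m^{\bot'}$ sit inside $t\Lambda_m\otimes k$, with dimensions $s$ and $r$ respectively.

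The inequalities $0\leq h\leq l\leq s$ are then essentially immediate: the first and third are trivial (using $\calG_m\cap \calG_m^{\bot'}\subset \calG_m$ for $l\leq s$), and $h\leq l$ follows by intersecting the two reduced inclusions above to obtain $t\calF_m\subset \calG_m\cap \calG_m^{\bot'}$.

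The interesting point, which I expect to be the main obstacle, is the parity $l\equiv s\pmod 2$. I would prove this by showing that the modified pairing $\{\,,\,\}$ of \S\ref{spl} descends in the special fiber to a non-degenerate \emph{alternating} $k$-bilinear form on the $n$-dimensional subspace $t\Lambda_m\otimes k\subset \Lambda_m\otimes k$. Well-definedness and non-degeneracy of the reduction follow from the perfectness of $\{\,,\,\}$ at the integral level together with the self-duality of $\Lambda_m$. The alternating property is the crucial new phenomenon in characteristic $p$: at the integral level one computes
\[
\{(t+\pi)x,(t-\pi)y\}+\{(t+\pi)y,(t-\pi)x\}=2\pi(x,y),
\]
using the symmetry of $(\,,\,)$ and the skew-adjointness $(tx,y)=-(x,ty)$; modulo $\pi$ the right-hand side vanishes, and the characteristic being different from $2$ forces the diagonal $\{tx,tx\}=(tx,x)$ to vanish as well.

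Once this alternating form is established, $\calG_m^{\bot'}$ is precisely the symplectic orthogonal of $\calG_m$ inside $t\Lambda_m\otimes k$, so $\calG_m\cap \calG_m^{\bot'}$ is the radical of the restriction of the alternating form to $\calG_m$. The quotient $\calG_m/(\calG_m\cap \calG_m^{\bot'})$ then carries a non-degenerate alternating form and hence has even dimension $s-l$, yielding $l\equiv s\pmod 2$.
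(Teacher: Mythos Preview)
Your proof is correct and follows essentially the same approach as the paper: both establish the chain $t\calF_m\subset \calG_m\cap\calG_m^{\bot'}\subset\calG_m$ for the inequalities, and both deduce the parity from the fact that the modified pairing on $t\Lambda_m\otimes k$ is alternating, so that $s-l$ is the rank of a skew-symmetric matrix. The only cosmetic difference is that the paper verifies the alternating property by writing down the matrix of $\{\,,\,\}$ in the standard basis (namely $-J_n$), whereas you derive it from the identity $\{(t+\pi)x,(t-\pi)y\}+\{(t+\pi)y,(t-\pi)x\}=2\pi(x,y)$; both are equally valid.
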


\begin{proof}
In special fiber, the space $t \calF_m$ is inside both $\calG_m$ and $\calG_m^{\bot'}$. One has therefore the inclusions
$$t \calF_m \subset \calG_m \cap \calG_m^{\bot'} \subset \calG_m.$$
Taking the dimensions, one gets the inequalities $0 \leq h \leq l \leq s$. 

Now the modified pairing is between $t \Lambda_m$ and itself, and its matrix in the standard basis is given by $- J_n$. It is therefore alternating. If we denote by $f_1, \dots, f_s$ a basis for $\calG_m$, and if $A$ is the matrix $(\{f_i, f_j\})_{i,j}$ then $A$ is an skew-symmetric matrix. Moreover $l$ is equal to the nullity of $A$. Since the rank of a skew-symmetric matrix is even, one gets the relation $l = s \mod 2$.
\end{proof}

The integers $(h,l)$ will give a  stratification on $\Mspl\otimes k$. Note that the spin condition, (see condition (4') of Definition \ref{DefSplSpin}), requires that $h=\dim(t\calF_m)$ has the same parity as $s$. More precisely, we get:
\begin{equation}
\Mspl\otimes k=\coprod_{\substack{0\leq h\leq l\leq s \\ h,l \, \equiv \, s \, \text{mod} \, 2}} X_{h,l}.
\end{equation}

\section{Closure relations}\label{Closure}
The goal of this section is to compute the closure relations for the strata $X_{h,l}$ of $\Mspl\otimes k$. 
We have 
\[
(0) \subseteq t\calF_m \subseteq \calG_m\cap \calG_m^{\bot'} \subseteq \calG_m \subset t\Lambda_m
\]
with the corresponding ranks $ 0 \leq h \leq l \leq s \leq n$. Also, recall from the previous section that $h$ and $l$ have the same parity as $s$.

\begin{Lemma}\label{1stInclusion}
Let $(h,l)$ be integers with $ 0\leq h\leq l\leq s$ and $h,l = s \mod 2$. Then
\[
\overline{X_{h,l}} \subseteq \coprod_{\substack{0\leq  h' \leq h\leq l \leq l' \leq s \\ h',l' \, \equiv \, s \, \text{mod} \, 2 }} X_{h',l'}
\]
\end{Lemma}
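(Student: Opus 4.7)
The approach is to prove semi-continuity of the two numerical invariants $h(x)=\dim(t\calF_m)$ and $l(x)=\dim(\calG_m\cap\calG_m^{\bot'})$ as functions on $\Mspl\otimes k$, and then deduce the closure containment directly. Note that the parity conditions $h',l'\equiv s\bmod 2$ on the right-hand side are automatic: they hold for every point of $\Mspl\otimes k$ by the spin condition (4$'$) and Lemma 3.11. Thus it suffices to prove the two inequalities $h'\le h$ and $l'\ge l$ for any point $x\in\overline{X_{h,l}}$.

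For the bound on $h$, I would observe that in the special fiber $\pi=0$, so splitting condition (3) of Definition \ref{DefSplSpin} becomes $t\calF_m\subset\calG_m$ and $t\calG_m=0$. Consequently, multiplication by $t$ defines a morphism of locally free sheaves $t\colon\calF_m\to\calG_m$ on $\Mspl\otimes k$, whose rank at $x$ equals $h(x)$. Since the rank of a morphism of vector bundles is lower semi-continuous, the locus $\{x:h(x)\le h\}$ is closed. It contains $X_{h,l}$ tautologically, so it contains $\overline{X_{h,l}}$, yielding $h'\le h$.

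For the bound on $l$, I would use the matrix description from the proof of Lemma 3.11. Locally on $\Mspl\otimes k$, after choosing a local basis $f_1,\dots,f_s$ of $\calG_m$, the skew-symmetric $s\times s$ matrix $A=(\{f_i,f_j\})$ has entries in $\calO_{\Mspl\otimes k}$ and satisfies $l(x)=s-\rank A(x)$. Lower semi-continuity of $\rank A$ then makes $l$ upper semi-continuous, so $\{x:l(x)\ge l\}$ is closed, contains $X_{h,l}$, and therefore contains $\overline{X_{h,l}}$. Putting the two inequalities together with the universal parity constraints produces the desired inclusion.

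The argument is essentially formal; the only point worth checking is that $h$ and $l$ are genuinely ranks of morphisms of the universal locally free sheaves, which is immediate from the construction of $\Mspl$. I would not anticipate a genuine obstacle here — the substantive part of the stratification is the reverse inclusion, which is not part of the present statement.
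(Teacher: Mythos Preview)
Your proof is correct and follows the same approach as the paper: both argue that $h$ decreases and $l$ increases under specialization, using lower semi-continuity of the rank of a morphism of locally free sheaves. The paper's proof is a one-line citation of \cite[Proposition~2.9]{BH}, whereas you spell out explicitly why $h$ is the rank of $t\colon\calF_m\to\calG_m$ and $l$ is $s$ minus the rank of the skew-symmetric matrix $A$; this is exactly the content of the cited result.
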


\begin{proof}
As in \cite[Proposition 2.9]{BH}, by the definition of $h$ and $l$, we can deduce that $h$ decreases by specialization while $l$ increases by specialization.
\end{proof}

Now, we are ready to prove the main result of this subsection:
\begin{Proposition}\label{prop40}
Let $(h,l)$ be integers with $ 0\leq h\leq l\leq s$ and $h,l = s \mod 2$. Then
\[
\overline{X_{h,l}} = \coprod_{\substack{0\leq  h' \leq h\leq l \leq l' \leq s \\ h',l' \, \equiv \, s \, \text{mod} \, 2 }} X_{h',l'}.
\]
\end{Proposition}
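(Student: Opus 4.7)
The inclusion $\supseteq$ is the nontrivial direction, since $\subseteq$ is precisely Lemma \ref{1stInclusion}. The plan is a standard reduction to adjacent strata, followed by an explicit deformation argument. Any admissible pair $(h',l')$ with $h' \leq h \leq l \leq l'$ can be reached from $(h,l)$ by a finite chain of elementary moves of the form $(h,l) \rightsquigarrow (h-2,l)$ and $(h,l) \rightsquigarrow (h,l+2)$, both preserving the parity constraint $h,l \equiv s \pmod 2$. Since closure is transitive along such a chain, it suffices to produce, for each elementary move, a $k[\![T]\!]$-valued point of $\Mspl$ whose special fiber lies in the smaller stratum and whose generic fiber lies in the larger one.

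For the move $(h,l) \rightsquigarrow (h-2,l)$, fix a point $x_0 = (\calF_0, \calG_0) \in X_{h-2,l}$. Using the $\scrG(k)$-action together with the classification of normal forms, we may place the nested sequence $t\calF_0 \subseteq \calG_0 \cap \calG_0^{\bot'} \subseteq \calG_0 \subseteq \calF_0$ into a standard block form relative to the basis $\{e_1,\ldots,e_n\}$. We then deform two generators of $\ker(t|_{\calF_0})$ that are not contained in $\calG_0 \cap \calG_0^{\bot'}$ by adding $T$-multiples of vectors whose $t$-images lie outside $t\calF_0$ and are linearly independent; the resulting family $\calF(T)$ leaves $\calG_0$ unchanged, and one checks that $(\calF(T), \calG_0)$ satisfies conditions (1)--(4) of Definition \ref{DefSplSpin} with $\dim t\calF$ jumping from $h-2$ to $h$ generically. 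Symmetrically, for $(h,l) \rightsquigarrow (h,l+2)$, we perturb two generators of $\calG_0 \cap \calG_0^{\bot'}$ lying outside $t\calF_0$ by $T$-multiples of vectors in $\calG_0$ chosen so that the restriction of the modified pairing to the deformed $\calG(T)$ acquires generically a nondegenerate $2\times 2$ block, cutting $\dim(\calG \cap \calG^{\bot'})$ down by exactly $2$.

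The hardest part will be ensuring that these perturbations actually reach $X_{h,l}$ at the generic fiber, and not some deeper stratum: the relevant rank must jump by \emph{exactly} $2$, not more. This reduces to a finite-dimensional linear algebra calculation that holds once the perturbing vectors are chosen in sufficiently generic position with respect to the existing flags, which is possible since the ambient space is large enough. The spin condition (4') of Definition \ref{DefSplSpin} is automatic here because our moves change $h$ by an even integer, so the parity of $\dim t\calF$ is preserved throughout the family. Finally, the isotropy of $\calF$ and the duality $\calG^{\bot'} = (t-\pi)\calG^\bot$ of Lemma \ref{lm38} translate into explicit linear constraints on the perturbation vectors, which are straightforward to satisfy by a compatible choice made within the standard block form.
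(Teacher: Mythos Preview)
Your overall strategy---reduce to the elementary moves $(h,l)\rightsquigarrow(h-2,l)$ and $(h,l)\rightsquigarrow(h,l+2)$ and invoke transitivity of closure---is sound and differs from the paper, which instead builds a single explicit family over $k[\![u]\!]$ (depending on matrices $Y_2,Z$ of size $l-h$ subject to $Z=-Z^t$ and $(Y_2-Y_2^t)Z=0$) generizing a given point of $X_{h,l}$ directly to any admissible $X_{h',l'}$, with $h'=h+\operatorname{rk}Z$ and $l'=h+\dim\ker(Y_2-Y_2^t)$. Your first move (deform $\calF$, fix $\calG$) corresponds to $Y_2=0$, $\operatorname{rk}Z=2$; it works, though you omit the requirement that the $t$-images of the perturbing vectors land inside $\calG_0$ (not just outside $t\calF_0$), without which the splitting condition $t\calF(T)\subset\calG_0$ fails.

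The second move, as you describe it, does not work. You propose to perturb generators of $\calG_0\cap\calG_0^{\bot'}$ by $T$-multiples of vectors \emph{in $\calG_0$}; but any perturbation by vectors already lying in $\calG_0$ leaves the subspace $\calG(T)=\calG_0\otimes k[\![T]\!]$ literally unchanged, hence the restriction of the modified pairing and its radical are unchanged, and $l$ cannot decrease. The correct perturbation uses vectors in $(\calF_0\cap t\Lambda_m)\setminus\calG_0$: such vectors exist because $\dim(\calF_0\cap t\Lambda_m)=n-h>s=\dim\calG_0$, and one chooses them to pair nontrivially under $\{\,,\,\}$ with the radical vectors being moved. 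In the paper's adapted basis these are precisely the vectors $tf_{n-l+1},\dots,tf_{n-h}$, which lie in $\calF_0\cap t\Lambda_m$ but outside both $\calG_0$ and $\calG_0^{\bot'}$; the matrix $Y_2$ governs this perturbation. With this correction your two-step argument goes through.
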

\begin{proof}
By Lemma \ref{1stInclusion}, it's enough to show  
\[\coprod_{\substack{0\leq  h' \leq h\leq l \leq l' \leq s \\ h',l' \, \equiv \, s \, \text{mod} \, 2 }} X_{h',l'} \subseteq \overline{X_{h,l}} .\] 
We will use similar arguments as in the proof of \cite[Proposition 2.11]{BH}. Let $\varepsilon$ be the unique element in $\{ 0, 1 \}$ equal to $s$ modulo $2$. We first prove that $X_{\varepsilon,s}$ lies in the closure of $ X_{h,l}$ for any $h,l$ with $ 0\leq h\leq l\leq s$ and $h=l=s \mod 2$. Let $x=(\calF_m,\calG_m)\in X_{\varepsilon,s} (k)$. Given a pair $(h,l)$ with $h \leq l$ and $h=l=s \mod 2$, we will find a generalization of $x$ which lies in $X_{h,l}$. To do this, we construct a lift $\Tilde{x}$ over $k\lp u\rp$ such that the generic fiber lies in $X_{h,l}$. Since $(\calF_m,\calG_m) \in X_{\varepsilon,s}$ we have $ \calG_m \subseteq \calG_m^{\bot'} \subset \calF_m .$ Suppose that  
\[
\calG_m = \text{span}_k \{t f_1, \dots,t f_s\}, \quad \calG_m^{\bot'}  = \text{span}_k \{t f_1, \dots,t f_s,t f_{s+1} \dots, t f_r \}
\]
and 
\[
\calF_m = \text{span}_k \{t f_1, \dots,t f_r,t f_{r+1} \dots, t f_n\}
\]
if $s$ is even; 
\[
\calF_m = \text{span}_k \{t f_1, \dots,t f_r,t f_{r+2} \dots, t f_{n}, f_1\}
\]
if $s$ is odd. By appropriate change of basis we can assume that the corresponding matrix of $\{\, , \, \}$ with respect to the basis $t f_1, \dots, t f_n$ is given by: 
\[
T = \left[\ 
\begin{matrix}[cccc]
&&& I_{s} \\ 
&& I_{\frac{r-s}{2}}&\\
& -I_{\frac{r-s}{2}}&&\\
 -I_{s}&&& 
\end{matrix}\ \right].
\]

Next, we consider a lift of $\calG_m$ given by 
\[
\widetilde{\calG_m} = M_1 =\left [\ 
\begin{matrix}[c]
 I_{s} \\ 
0\\
0\\
 X 
\end{matrix}\ \right],
\]
where $X \in \text{Mat} (uk\lp u\rp)$ and the corresponding sizes are $ (\frac{r-s}{2})\times s$, $ (\frac{r-s}{2})\times s$ and $ s \times s$. If $\varepsilon =1$, we ask that
$$ X = \left [\ 
\begin{matrix}[cc]
0 & 0 \\
0 & X_0
\end{matrix}\ \right],
$$
where $X_0$ is a square matrix of size $s-1$. The orthogonal $\widetilde{\calG_m^{\bot'}}$ of $\widetilde{\calG_m}$ will be a lift for $ \calG_m^{\bot'}$ and is given by 
\[
\widetilde{\calG_m^{\bot'}} = M_2 =\left[\ 
\begin{matrix}
 I_{s} & & \\ 
& I_{\frac{r-s}{2}} & \\
 & & I_{\frac{r-s}{2}} \\
X^t & 0 & 0 
\end{matrix}\ \right].
\]

Finally, we consider a lift $\widetilde{\calF_m}$ of $\calF_m$ inside $\Lambda_m$. Inside this space, we have: 
\[
\widetilde{\calG_m}=  \left[\ 
\begin{matrix}[c]
0_{n\times n}\\ \hline
M_1  
\end{matrix}\ \right], \quad  \widetilde{\calG_m^{\bot'}} = \left[\ 
\begin{matrix}[c]
0_{n\times n}\\ \hline
M_2  
\end{matrix}\ \right], \quad \widetilde{\calF_m} = \left[\ 
\begin{matrix}[c|c]
0_{n\times r} & *_{n \times s}  \\ \hline
M_3 & M_4 
\end{matrix}\ \right]
\]
where 
\[
M_3 = \left[\ 
\begin{matrix}
I_{s}& 0 & 0 \\
0 & I_{\frac{r-s}{2}} & 0  \\ 
0& 0 & I_{\frac{r-s}{2}}\\
X& 0 & 0 
\end{matrix}\ \right]. 
\]
If $\varepsilon = 0$, we set
\[
M_4 = \left [\ 
\begin{matrix}[c]
0\\ 
0\\
 0\\
  I_{s}  
\end{matrix}\ \right], \quad *_{n \times s}  = \left [\ 
\begin{matrix}[c]
W\\ 
0\\
0\\
XW
\end{matrix}\ \right],
\]
where $W$ is a square matrix of size $s$. If $\varepsilon = 1$, we set
\[
M_4 = \left [\ 
\begin{matrix}[cc]
0 & 0\\ 
0 & 0\\
0 & 0\\
0 & 0 \\
0 & I_{s-1}  
\end{matrix}\ \right], \quad *_{n \times s}  = \left [\ 
\begin{matrix}[cc]
e & W\\ 
0 & 0\\
0 & 0\\
0 & XW
\end{matrix}\ \right], \quad e = \left [\ 
\begin{matrix}[c]
1\\ 
0\\
 \vdots\\
0  
\end{matrix}\ \right], \quad W = \left [\ 
\begin{matrix}[c]
0\\ 
W_0      
\end{matrix}\ \right]
\]
where $W_0$ is a square matrix of size $s-1$. These lifts need to satisfy the conditions (1)-(4) of Definition \ref{DefSplSpin}. 
From the above constructions, it's obvious that $(0) \subset \widetilde{\calG_m} \subset\widetilde{\calF_m} \subset \Lambda_m  $, $t \widetilde{\calG_m} = (0)$ and  $ t\widetilde{\calF_m} \subset \widetilde{\calG_m}$. 

It is enough to check the isotropic condition $( \widetilde{\calF_m}, \widetilde{\calF_m} ) = 0 $ from above. Note that $ ( \, , \, ) $ is symmetric and is given by the matrix 
\[
\left[\ 
\begin{matrix}[c|c]
0_{n\times n} & -T  \\ \hline
T & 0_{n\times n} 
\end{matrix}\ \right].
\]
From the isotropic condition, we get
\begin{equation}\label{isot.cond}
W=-W^t, \quad (X-X^t)W = 0
\end{equation}
when $\varepsilon = 0$, and 
\begin{equation}\label{isot.cond1}
W_0=-W_0^t, \quad (X_0 - X_0^t)W_0 = 0
\end{equation}
when $\varepsilon = 1$. Note that the spin condition, i.e. $\text{rk}(\widetilde{t\calF_m}) = s \mod 2$, is automatically satisfied. Indeed, if $s$ is even, we get that $\text{rk}(\widetilde{t\calF_m}) = \text{rk}(W) $ and $W$ is a skew-symmetric matrix. If $s$ is odd, then $\text{rk}(\widetilde{t\calF_m}) = 1 + \text{rk}(W_0) $, which is odd since $W_0$ is a skew-symmetric matrix. 

Finally, observe that when $s$ is even, $ h = \text{rk} (W)$ and $ l = \text{rk}( \text{Ker}(X - X^t))$. For any couple $(h,l)$ with $h,l=s \mod 2$ and $ 0\leq h\leq l\leq s$, one can find matrices $X,W$ such that $ h = \text{rk} (W)$ and $ l = \text{rk}( \text{Ker}(X - X^t))$; this gives a generization $\Tilde{x}$ of $x$ and hence the result. The same reasoning applies when $s$ is odd: in that case one has $h = 1 + \text{rk} (W_0)$ and $ l = 1 + \text{rk}( \text{Ker}(X_0 - X_0^t))$. 

The general case can be deduced by a similar discussion: fix a pair $(h,l)$ with $h=l=s \mod 2$ and $ x \in X_{h,l}$. We construct lifts $\Tilde{x} \in X_{h',l'}\otimes_k k\lp u\rp$ for any pair $(h',l')$ with $h' =l'=s \mod 2$ and $0\leq  h \leq h'\leq l' \leq l \leq s. $ Observe that 
\[
(0) \subseteq t\calF_m \subseteq \calG_m\cap \calG_m^{\bot'} \subseteq \calG_m \subset  (t\calF_m)^{\bot'} \subset  t\Lambda_m
\]
and
\[
(0) \subseteq t\calF_m \subseteq \calG_m\cap \calG_m^{\bot'} \subseteq \calG_m^{\bot'} \subset  (t\calF_m)^{\bot'} \subset  t\Lambda_m.
\]
Suppose that 
\[
t\calF_m = \text{span}_k \{t f_1, \dots,t f_h\}, \,\, \calG_m\cap \calG_m^{\bot'}   = \text{span}_k \{t f_1, \dots,t f_h,t f_{h+1} ,\dots, t f_l \},
\] 
\[
\calG_m = \text{span}_k \{ t f_1, \dots,t f_l,t f_{l+1}, \dots, t f_s \}, \, \calG_m^{\bot'}   = \text{span}_k \{t f_1, \dots,t f_l,t f_{s+1}, \dots, t f_{n-l} \},
\]
and 
\[
(t\calF_m)^{\bot'} = \text{span}_k \{t f_1, \dots,t f_{n-l},t f_{n-l+1}, \dots, t f_{n-h}\}, \,\,t\Lambda_m = \text{span}_k \{t f_1, \dots,t f_n\}.
\]
The corresponding matrix $T$ of $\{\, , \, \}$ is given by:
\[
T = \left[\ 
\begin{matrix}[cccccc]
&&&&& I_{h} \\ 
&&&& I_{l-h}&\\
&& A&&&\\
&&& B&& \\
& -I_{l-h}&&&&\\
-I_{h}&&&&&
 
\end{matrix}\ \right], 
\]
where 
\[
A =\left[\begin{matrix}
 &  I_{\frac{s-l}{2}} \\ 
- I_{\frac{s-l}{2}} &  
\end{matrix}\ \right], \quad B =\left[\begin{matrix}
 &  I_{\frac{r-l}{2}} \\ 
- I_{\frac{r-l}{2}} &  
\end{matrix}\ \right].
\]
Observe that $T$ is skew-symmetric and the matrix of the symmetric form $(\, , \, ) $ is given by 
\[
\left[\begin{matrix}
 &  -T \\ 
T &  
\end{matrix}\ \right].
\]
A lift of $\calG_m$ is given by $\widetilde{\calG_m} = \left[\ \begin{matrix}
\mathbf{a}_1 &\dots  & \mathbf{a}_4
\end{matrix}\ \right]$ where the matrices $\mathbf{a}_1, \dots ,\mathbf{a}_4$ have the corresponding sizes $ h\times n,\, (l-h)\times n,\,  (\frac{s-l}{2})\times n, \, (\frac{s-l}{2})\times n$. In particular, 
\[
\mathbf{a}_1^t =  \left[\ \begin{matrix}
I_{h} & 0_{h\times (n-h)}
\end{matrix}\ \right], \,\, \mathbf{a}_2^t =  \left[\ \begin{matrix}
0_{(l-h)\times h} & I_{l-h} & 0_{(l-h)\times (n-2l)} & Y^t_2 & 0_{(l-h)\times h}
\end{matrix}\ \right],
\]
where $ Y_2\in \text{Mat} (uk\lp u\rp)$ of size $ (l-h)\times (l-h)$ and 
\[
\mathbf{a}_3^t =  \left[\ \begin{matrix}
0_{\frac{s-l}{2} \times l} & I_{\frac{s-l}{2}} & 0_{\frac{s-l}{2}\times (r - \frac{l}{2})}
\end{matrix}\ \right], \,\, \mathbf{a}_4^t =  \left[\ \begin{matrix}
0_{\frac{s-l}{2} \times \frac{s+l}{2}} & I_{\frac{s-l}{2}} & 0_{\frac{s-l}{2} \times r} 
\end{matrix}\ \right].
\]
Next consider the lift $\widetilde{\calF_m} =  \left[\ \begin{matrix}
\mathbf{b}_1 & \mathbf{b}_2 &  \mathbf{b}_3
\end{matrix}\ \right]$ where the matrices $\mathbf{b}_1, \mathbf{b}_2 ,\mathbf{b}_3$ have the corresponding sizes $ 2n \times (n-l),\, 2n \times h ,\,  2n \times l-h $. In particular we define these matrices as follows:
\[
\mathbf{b}_1^t =   \left[\ \begin{matrix}
0_{(n-l) \times n} & I_{(n-l)\times (n-l) } & \mathbf{ c } & 0_{(n-l) \times h}
\end{matrix}\ \right],
\]
where $ \mathbf{ c }^t  =  \left[\ \begin{matrix}
0_{(l-h) \times h} & Y_2 & 0_{ (l-h)\times (n-2l)} \end{matrix}\ \right] $ and 
$\mathbf{b}_2^t =   \left[\ \begin{matrix}
I_{h} & 0_{ h \times (2n-h)} 
\end{matrix}\ \right] $. Lastly,
\[
\mathbf{b}_3^t =   \left[\ \begin{matrix}
0_{(l-h) \times h} & Z^t &  & 0_{(l-h) \times (n-2l) } & (Y_2Z)^t & 0_{(l-h) \times (n+h-l)} & I_{l-h} & 0_{(l-h) \times h}
\end{matrix}\ \right]
\]
where $ Z \in \text{Mat} (uk\lp u\rp)$ of size $ (l-h)\times (l-h)$. We also ask for the pair $ (\widetilde{\calG_m}, \widetilde{\calF_m})$ to satisfy the conditions (1)-(4) of Definition \ref{DefSplSpin}. The conditions (1) and (3) are automatically satisfied from the above constructions. From condition (2) we get
$ Z = -Z^t$ and $ (Y_2 - Y^t_2)Z = 0.$ Also, $ \text{rk}(t \widetilde{\calF_m}) = h + \text{rk} (Z) $ and $\text{rk}( \widetilde{\calG_m} \cap \widetilde{\calG_m^{\bot'} }) = h + \text{rk} ( \text{Ker}(Y_2 - Y^t_2 ))$. Therefore, if we have $ h \leq h' = h+ a \leq l'=h+b \leq l$ for $ a,b \geq 0$ even with $ l-h \geq b \geq a$ then we can find a skew-symmetric matrix $Z$ of rank $a$ and a $Y_2$ such that $\text{rk} ( \text{Ker}(Y_2 - Y^t_2 ) )= b \geq a$ where $ (Y_2 - Y^t_2)Z = 0$ (note that $Y_2 - Y^t_2$ is a skew-symmetric matrix of size $l-h$, which is even).
\end{proof}

Next we show that
\begin{Proposition}\label{irredcomp}
    For all pairs $(h,l)$ with $h=l=s \mod 2$ and $ 0 \leq h\leq l\leq s$, the stratum $X_{h,l}$ is nonempty and smooth, and is
equidimensional of dimension $ rs - \frac{(l-h)(l-h-1)}{2}$. 
\end{Proposition}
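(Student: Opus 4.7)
The plan is to verify nonemptiness, smoothness, and the dimension formula by analyzing a versal chart of $\Mspl\otimes k$ around a point of $X_{h,l}$, building on the matrix presentation from the proof of Proposition~\ref{prop40}.

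For nonemptiness, I would exhibit an explicit point of $X_{h,l}$ using the standard basis of $\Lambda_m$: choose $\calF_m$ with $\mathrm{rk}(t\calF_m)=h$ (admissible since $h\equiv s\bmod 2$ by the spin condition) and $\calG_m\supset t\calF_m$ of dimension $s$ contained in $t\Lambda_m$ such that the modified pairing $\{\,,\,\}$ restricts to $\calG_m$ with radical of dimension exactly $l$. This is a simplification of the initial data constructed at the start of the proof of Proposition~\ref{prop40}.

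For smoothness and dimension, fix a point $x_0=(\calF_0,\calG_0)\in X_{h,l}$, choose bases adapted to the flag
\[
0\subset t\calF_0 \subset \calG_0\cap\calG_0^{\bot'} \subset \calG_0 \subset (t\calF_0)^{\bot'} \subset t\Lambda_m
\]
of ranks $0\leq h\leq l\leq s\leq n-l\leq n-h\leq n$, and build the versal chart of $\Mspl\otimes k$ around $x_0$ as in the proof of Proposition~\ref{prop40}. This chart is cut out in an ambient affine space whose coordinates split into a block of free coordinates and a pair $(Y_2,Z)\in\mathrm{Mat}_{(l-h)\times(l-h)}\times\mathrm{Skew}_{l-h}$ by the single matrix relation $(Y_2-Y_2^t)Z=0$. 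The identities $\mathrm{rk}(t\calF_m)=h+\mathrm{rk}(Z)$ and $\mathrm{rk}(\calG_m\cap\calG_m^{\bot'})=h+\mathrm{rk}(\ker(Y_2-Y_2^t))$ from that proof identify $X_{h,l}$ in the chart with the locus $\{Z=0,\ Y_2=Y_2^t\}$, on which the isotropy relation $(Y_2-Y_2^t)Z=0$ is automatic. Thus $X_{h,l}$ is locally a product of affine spaces, hence smooth.

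For the dimension, write $Y_2=S+A$ with $S$ symmetric and $A$ skew (legitimate since $p$ is odd). The chart relation becomes $AZ=0$ in $\mathrm{Skew}_{l-h}\times\mathrm{Skew}_{l-h}$. Stratifying by $\mathrm{rk}(A)=2a$, the locus of such $A$ has dimension $\binom{l-h}{2}-\binom{l-h-2a}{2}$, while the corresponding locus of admissible $Z$ (skew matrices with image contained in $\ker A$) has dimension $\binom{l-h-2a}{2}$; the sum is $\binom{l-h}{2}$ independently of $a$, so $\{AZ=0\}$ is equidimensional of dimension $\binom{l-h}{2}$. Combined with the free symmetric part $S$ of dimension $\binom{l-h+1}{2}$, the $(Y_2,Z)$-part of the chart contributes $(l-h)^2$. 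Matching against $\dim \Mspl\otimes k=rs$ (verified by applying the same chart at a point of the open stratum $X_{h,h}$, where the $(Y_2,Z)$-block collapses so that the chart becomes an affine space whose dimension must equal $rs$ by upper semicontinuity from the generic fiber), the free block has dimension $rs-(l-h)^2$, so
\[
\dim X_{h,l}=\bigl(rs-(l-h)^2\bigr)+\binom{l-h+1}{2}=rs-\tfrac{(l-h)(l-h-1)}{2}.
\]
The main obstacle is the rank-stratification computation $\dim\{AZ=0\}=\binom{l-h}{2}$ in $\mathrm{Skew}_{l-h}^{\times 2}$, together with the verification that the chart itself has dimension $rs$; once these are in place, smoothness, equidimensionality, and the dimension formula follow directly.
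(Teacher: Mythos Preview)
Your strategy is reasonable, but there is a genuine gap in the chart construction. The proof of Proposition~\ref{prop40} does not build a versal chart of $\Mspl\otimes k$: it only produces a \emph{family of specific deformations} of the basepoint $x_0$ parametrized by $(Y_2,Z)$, with all other coordinates set to zero. The ``block of free coordinates'' you invoke is not present in that proof and must be constructed separately. What you need is the analogue of the full local ring computation of \S\ref{Flatness}.2 at a point of a general stratum $X_{h,l}$ rather than only at $X_{\varepsilon,s}$; until that is done, you cannot assert that the chart factors as $(\text{free block})\times\{(Y_2,Z):(Y_2-Y_2^t)Z=0\}$, nor identify $X_{h,l}$ inside it.

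Even granting the chart structure, your determination of the free-block dimension is circular. You argue that the free block has dimension $rs-(l-h)^2$ by matching against $\dim(\Mspl\otimes k)=rs$, justified ``by upper semicontinuity from the generic fiber'' at a point of $X_{h,h}$. But upper semicontinuity of fiber dimension only gives $\dim(\Mspl\otimes k)\ge rs$; the reverse inequality is exactly what flatness (proved later, in \S\ref{Flatness}) would give you, and you cannot appeal to it here. Moreover, the chart built at a point of $X_{h,h}$ is a \emph{different} chart from the one at $x_0\in X_{h,l}$, so the collapse of its $(Y_2,Z)$-block tells you nothing about the number of free coordinates at $x_0$. A correct fix is either to count the free coordinates directly (as \S\ref{Flatness}.2 does for $X_{\varepsilon,s}$), or to lift the chart to $O_F$ and compare with the generic fiber.

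The paper's own proof defers to \cite[Proposition~2.12]{BH}, which proceeds instead by a direct fibration argument: one parametrizes $\calG_m\subset t\Lambda_m$ with prescribed radical dimension $l$ for the modified pairing, and then the compatible $\calF_m$ with $\dim t\calF_m=h$; the dimensions of base and fiber are computed explicitly, and the only change from \cite{BH} is that the modified pairing is alternating rather than symmetric, which alters the dimension count. That route avoids both gaps above, since it never needs a versal chart and never appeals to the global dimension of $\Mspl\otimes k$. Your stratification computation of $\dim\{AZ=0\}=\binom{l-h}{2}$ in $\mathrm{Skew}_{l-h}^{\times 2}$ is correct and pleasant, and would slot into a complete argument once the chart is honestly constructed.
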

\begin{proof}
The proof follows the same reasoning as the proof of \cite[Proposition 2.12]{BH}. The only difference is that the modified pairing is alternating rather than symmetric, which leads to different dimensions for the strata $X_{h,l}$. 
\end{proof}

\begin{Corollary}
The irreducible components of the scheme $\Mspl\otimes k$ are the closures of $X_{h,h}$ for $h = s \mod 2$ and $0 \leq h \leq s$. 
\end{Corollary}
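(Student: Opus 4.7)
The plan is to deduce the Corollary from the closure relations of Proposition \ref{prop40} combined with the dimension statement of Proposition \ref{irredcomp}, by a purely poset-theoretic argument. First I would rewrite Proposition \ref{prop40} in the equivalent form that $\overline{X_{h,l}} \subseteq \overline{X_{h',l'}}$ holds precisely when $h \le h'$ and $l \ge l'$, making the specialization order on the index set $\Sigma = \{(h,l) : 0 \le h \le l \le s,\ h \equiv l \equiv s \bmod 2\}$ completely explicit. In this poset the maximal elements are exactly the diagonal pairs $(h,h)$: if $h < l$ then $h + 2 \le l$ (by the parity constraint) so $(h,l) \prec (l,l)$, whereas for $(h,h) \preceq (h',l')$ one has $l' \le h \le h' \le l'$ using $h' \le l'$, forcing $(h',l') = (h,h)$.

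Next I would invoke Proposition \ref{irredcomp}: the formula $\dim X_{h,l} = rs - \tfrac{(l-h)(l-h-1)}{2}$ reaches its maximum $rs$ precisely when $l = h$, while every off-diagonal stratum has strictly smaller dimension (since $l - h \ge 2$ there). So the top-dimensional strata are exactly the $X_{h,h}$ with $0 \le h \le s$ and $h \equiv s \bmod 2$. Smoothness from Proposition \ref{irredcomp}, together with the explicit affine-cell description appearing in the proof of Proposition \ref{prop40} (where $X_{h,h}$ corresponds to the open locus $Z = 0$ with $Y_2 - Y_2^t$ nondegenerate inside an affine space of matrix parameters), shows that each $X_{h,h}$ is irreducible, hence so is its closure $\overline{X_{h,h}}$.

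Finally I would conclude the decomposition into irreducible components by a formal argument. Every stratum $X_{h,l}$ satisfies $X_{h,l} \subseteq \overline{X_{h,h}}$ (apply the criterion with $h' = h$ and $l' = h \le l$), so
\[
\Mspl \otimes k \;=\; \bigcup_{\substack{0 \le h \le s \\ h \equiv s \bmod 2}} \overline{X_{h,h}}.
\]
Moreover, for distinct valid values $h \ne h'$ the criterion forces $X_{h,h} \not\subset \overline{X_{h',h'}}$, so none of these irreducible closed subsets is contained in another. They are therefore exactly the irreducible components of $\Mspl \otimes k$. The only nontrivial point beyond the formalities is the irreducibility (rather than merely equidimensionality) of the top-dimensional strata $X_{h,h}$, which is built into the explicit parameterization used in the proof of Proposition \ref{irredcomp}; everything else reduces to manipulating the combinatorial closure order.
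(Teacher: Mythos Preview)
Your proposal is correct and follows essentially the same approach as the paper, which compresses the whole argument into one sentence (``It follows from Proposition \ref{irredcomp} and the fact that $l \neq h+1$ since $l,h$ have the same parity''); you simply spell out the poset combinatorics and the covering/non-redundancy steps that the paper leaves implicit. One small correction: the irreducibility of each $X_{h,h}$ is not really extracted from the lifting constructions in the proof of Proposition \ref{prop40} but rather from the fiber-bundle description underlying Proposition \ref{irredcomp} (following \cite{BH}), or alternatively from the transitivity of the $\scrG\otimes k$-action established later in Proposition \ref{prop 61}.
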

\begin{proof}
It follows from Proposition \ref{irredcomp} and the fact that $ l \neq h+1$ since $l,h$ have the same parity.
\end{proof}
\begin{Proposition}
The smooth locus of $\Mspl\otimes k$ is the union of the strata $X_{h,h}$ for $h = s \mod 2$ and $0 \leq h \leq s$.
\end{Proposition}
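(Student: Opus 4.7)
The plan is to combine three ingredients already in place: the description of the irreducible components of $\Mspl\otimes k$ from the corollary preceding this proposition, the closure relations of Proposition \ref{prop40}, and the smoothness and dimension count of each stratum given in Proposition \ref{irredcomp}. Recall that the irreducible components are precisely the closures $\overline{X_{h,h}}$ for $h \equiv s \bmod 2$, $0 \leq h \leq s$, each of dimension $rs$.

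For the inclusion of each $X_{h,h}$ in the smooth locus, I would argue as follows. By Proposition \ref{prop40}, the containment $X_{h,h} \subseteq \overline{X_{h'',h''}}$ forces $h'' \leq h$ and $h \leq h''$, so $h'' = h$. Thus every $x \in X_{h,h}$ lies on a \emph{unique} irreducible component of $\Mspl \otimes k$, namely $\overline{X_{h,h}}$. Within that component $X_{h,h}$ is the top-dimensional stratum, hence open in $\overline{X_{h,h}}$, and it is smooth by Proposition \ref{irredcomp}. So $x$ is a smooth point of $\Mspl \otimes k$.

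For the reverse inclusion, take any $x \in X_{h,l}$ with $h < l$; I would show that $x$ lies on at least two distinct irreducible components of $\Mspl \otimes k$, which forces the local ring $\mathcal{O}_{\Mspl\otimes k,\,x}$ to have two distinct minimal primes, hence to fail to be a domain and therefore to fail to be regular. Applying Proposition \ref{prop40} again, $X_{h,l} \subseteq \overline{X_{a,a}}$ iff $h \leq a \leq l$ with $a \equiv s \bmod 2$; since $h$ and $l$ both satisfy this parity and $h \neq l$, the values $a = h$ and $a = l$ are distinct and admissible, so $x \in \overline{X_{h,h}} \cap \overline{X_{l,l}}$.

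The argument is essentially combinatorial once Propositions \ref{prop40} and \ref{irredcomp} are in hand; no step presents a real obstacle. The only conceptual ingredient is the standard fact that a Noetherian scheme cannot be smooth at a point lying on two distinct irreducible components, which is immediate from the fact that a regular local ring is a domain.
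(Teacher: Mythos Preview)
Your argument is correct, and for the harder direction it takes a genuinely different route from the paper.

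For the inclusion $\bigcup_h X_{h,h}\subseteq (\Mspl\otimes k)^{\rm sm}$ both you and the paper simply invoke Proposition~\ref{irredcomp}. One small sharpening is worth making explicit: you deduce smoothness of $\Mspl\otimes k$ at $x$ from the facts that $x$ lies on a unique irreducible component and that $X_{h,h}$ is open \emph{in that component}; but lying on a single component and being smooth there does not by itself rule out nilpotents or embedded components in the ambient scheme. What you actually want is that $X_{h,h}$ is open in $\Mspl\otimes k$ itself, and this follows at once from the closure relations you already cite (no stratum other than $X_{h,h}$ contains $X_{h,h}$ in its closure). With that in hand, Proposition~\ref{irredcomp} applied to this open subscheme gives the conclusion directly.

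For the reverse inclusion the approaches diverge. The paper works by hand: at a chosen point of each $X_{h,l}$ with $h<l$ it writes down an explicit first-order deformation over $k[\epsilon]/\epsilon^2$ and checks that it cannot be lifted to $k[\epsilon]/\epsilon^3$, then implicitly uses transitivity of the $\scrG$-action on the stratum to conclude that every point is singular. Your argument is purely structural: since both $h$ and $l$ have the correct parity, Proposition~\ref{prop40} places $X_{h,l}$ inside the two distinct irreducible components $\overline{X_{h,h}}$ and $\overline{X_{l,l}}$, so the local ring at any point of $X_{h,l}$ has at least two minimal primes and cannot be regular. This bypasses both the explicit obstruction computation and the appeal to the group action, at the cost of using the full strength of the closure relations.
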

\begin{proof}
From the above it is enough to prove that for each pair $(h,l)$ with $h=l=s \mod 2$ with $h < l $ there is $x \in X_{h,l}$ such that $\Mspl\otimes k$ is not smooth at $x$. The proof is similar to the proof of \cite[Proposition 2.14]{BH}.

We use the notation/basis of the general case in the proof of Proposition \ref{prop40}. In addition, we let $ \calF_m = \text{span}_k \{f_1, \dots,f_h,t f_1, \dots,t f_{n-h} \} $. Next we lift $ x = (\calG_m, \calF_m)$ to $x' = (\calG_m', \calF_m' ) \in \Mspl\otimes_k k[\epsilon]/\epsilon^2$ such that 
\[
\calG_m' =\text{span}_k \{t f_1, \dots,t f_{l-1} - \epsilon t f_{n-h}, t f_l+\epsilon t f_{n-h-1}, \dots , t f_s \} 
\]
and 
\begin{align*}
\calF_m' &= \text{span}_k \{ f_1, \dots, f_{h}, t f_1, \dots, t f_{l-1}-\epsilon t f_{n-h}, \\
&\qquad t f_l+ \epsilon t f_{n-h-1}, \dots, t f_{n-h-1} + \epsilon f_l, t f_{n-h} - \epsilon f_{l-1} \}. 
\end{align*}
This pair needs to satisfy the conditions (1)-(4) of Definition \ref{DefSplSpin}. We can easily see that all the conditions except (2) are automatically satisfied. So we need to check $(\calF_m' ,\calF_m' ) =0$. First observe that
$(t f_i , f_{n-h+i}) = 1  $ for $1\leq i \leq h$ and $(t f_{h+i} , f_{n-l+i}) = 1  $ for $1\leq i \leq l-h$. Also $(t f_{l+i} , f_{\frac{s+l}{2}+i}) = 1  $ for $1\leq i \leq \frac{s-l}{2}$ and $(t f_{s+i} , f_{s+\frac{r-l}{2}+i}) = 1  $ for $1\leq i \leq \frac{r-l}{2}$. Hence, $(\calF_m' ,\calF_m' ) =0$ since  
\[
(t f_{n-h-1}+ \epsilon f_l , t f_{n-h} - \epsilon f_{l-1}) = \epsilon - \epsilon = 0, \,\, (t f_{n-h-1}+ \epsilon f_l , t f_{l-1} - \epsilon t f_{n-h}) = \epsilon^2= 0.
\]
 We will now prove that this point cannot be lifted to $ k[\epsilon]/\epsilon^3$. Suppose it can and then consider the elements $u_1 = t f_{l-1} - \epsilon t f_{n-h} + \epsilon^2 t u \in \calG''_m$ and $ v_1 =  t f_{n-h-1}+\epsilon f_{l} + \epsilon^2 f_{n-h-1}+ \epsilon^2 v \in \calF''_m $ such that $ \epsilon^2 t v \in  \calG''_m$. Then $ (u_1, v_1 ) = \epsilon^2+ \epsilon^2  = 2 \epsilon^2 \neq 0$ and so, $ ( \calF''_m , \calF''_m ) \neq 0$. 
\end{proof}

\begin{Remark}{\rm
Even though the irreducible components are the closures of $X_{h,h}$, and $X_{h,h}$ is smooth,  it is not true in general that the irreducible components themselves are smooth. In particular, our computations show that the irreducible components that correspond to $X_{h,h}$ with $ 1 <h<s$ are not smooth. Therefore, ${\rm M}^{\rm spl}$ is not semi-stable when $s > 3$.
}\end{Remark}

\section{Flatness and  Reducedness of $\Mspl$}\label{Flatness}
Our goal in this section is to show that the splitting model $\Mspl$ is flat, normal and Cohen-Macaulay over $\text{Spec}\, O_F$ and its special fiber is reduced. We will first state and prove the theorem, assuming the computation of the local rings which will be done in the second part.
 
\subsection{The main result } 

Recall that $\varepsilon$ is equal to $0$ if $s$ is even, and $1$ otherwise. We set $s'= s - \varepsilon$ so that $s'$ is an even integer.

\begin{Theorem}\label{flatEvenCase}
For any signature $(n-s,s)$, the splitting model $\Mspl$ is flat, normal over $O_F$ and its special fiber $\Mspl\otimes k$ is reduced. 
\end{Theorem}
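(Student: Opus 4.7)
The plan is to deduce the global properties of $\Mspl$ from an explicit computation of the completed local rings $\widehat{\mathcal O}_{\Mspl,x}$ at every $k$-point $x$ of the special fiber, indexed by the stratum $X_{h,l}$ containing $x$; the global assertions then follow by standard Serre-type criteria plus the stratification results of Section \ref{Closure}.

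First, fix $x=(\calF_m,\calG_m)\in X_{h,l}(k)$ and a basis of $\Lambda_m$ adapted to the flag $t\calF_m\subset \calG_m\cap \calG_m^{\bot'}\subset \calG_m\subset \calF_m\subset \Lambda_m$, of the form used in the proof of Proposition \ref{prop40}. In an affine chart of the relevant Grassmannian containing $x$, a universal deformation $(\widetilde\calG_m,\widetilde\calF_m)$ is parametrized by explicit matrix blocks over a power series ring $O_F[\![u_1,\dots,u_N]\!]$. The inclusion $\widetilde\calG_m\subset\widetilde\calF_m$ and the splitting condition $(t+\pi)\widetilde\calF_m\subset \widetilde\calG_m$, $(t-\pi)\widetilde\calG_m=0$ together eliminate most of $\widetilde\calF_m$ in terms of $\widetilde\calG_m$; what remains is governed by the isotropy condition (2) of Definition \ref{DefSplSpin}, which in the chosen basis takes the form of a ``skew-symmetric $\times$ symmetric'' matrix identity of type $W=-W^t$, $(X-X^t)W=0$ (with an analogous $(Y_2-Y_2^t)Z=0$ block in the general case), while the spin condition (4) is automatic from the even-rank structure of the skew-symmetric blocks.

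Second, I would identify the resulting defining ideal with one cutting out a classical determinantal scheme (essentially, the variety of pairs ``skew-symmetric matrix times arbitrary matrix equals zero''), which is Cohen--Macaulay with reduced reduction modulo $\pi$. This shows that $\widehat{\mathcal O}_{\Mspl,x}$ is Cohen--Macaulay, and that $\widehat{\mathcal O}_{\Mspl,x}/\pi$ is reduced. Because the local dimension at $x$ works out to $rs+1$, matching the generic fiber dimension $rs$ of Proposition \ref{notflat} plus one for $O_F$ and the equidimensionality of $\Mspl\otimes k$ established in Proposition \ref{irredcomp}, flatness over $O_F$ follows from the standard fact that a Cohen--Macaulay scheme whose special and generic fibers have equal dimension is flat over the discrete valuation ring.

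Third, the global conclusions follow formally. Reducedness of $\Mspl\otimes k$ is a local statement already handed by step two. For normality I would apply Serre's criterion $R_1+S_2$: $S_2$ is immediate from Cohen--Macaulayness, and $R_1$ follows from the description of the smooth locus as $\bigsqcup_h X_{h,h}$ proved in Section \ref{Closure}, since any stratum $X_{h,l}$ with $l>h$ forces $l-h\ge 2$ and therefore has codimension $\tfrac{(l-h)(l-h-1)}{2}\ge 1$ in $\overline{\mathrm M}^{\mathrm{spl}}$; flatness makes the special fiber a Cartier divisor, so the singular locus has codimension at least two in $\Mspl$. The main obstacle is the local ring computation at the deepest strata, in particular near $X_{\varepsilon,s}$, where the defining ideal couples a non-trivial skew-symmetric middle block with rank-restricted supplementary blocks from the splitting and spin data; identifying the quotient with a known reduced, Cohen--Macaulay determinantal model of the expected dimension is the real technical content of the argument.
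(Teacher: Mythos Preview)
Your strategy is broadly the same as the paper's, but there are a few concrete points where you diverge or are imprecise.  First, you do not need to compute the completed local ring at every stratum: since every $X_{h',l'}$ specializes to the closed stratum $X_{\varepsilon,s}$ (Proposition~\ref{prop40}), it suffices to compute there, and the paper does only this.  Second, your description of the defining equations is off.  In the chart at $X_{\varepsilon,s}$ the isotropy condition gives $W=-W^t$ and $(Z-Z^t+X^tY-Y^tX)W=2\pi I_{s'}$; after a change of variables this becomes $T+T^t=0$, $W+W^t=0$, $TW=2\pi I_{s'}$.  Both factors are skew-symmetric, not ``skew-symmetric times arbitrary'', and the right-hand side is $2\pi I$, not $0$.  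This matters: the relevant commutative algebra is that of Pfaffian ideals, not generic determinantal ideals, and the reducedness and Cohen--Macaulayness you need are not ``classical'' in the determinantal sense---the paper invokes Baetica's results on ideals of Pfaffians (via standard tableaux pairs and the identification of $R'$ with $S(I)/(a)$ for a Pfaffian ideal $I$) to get them.

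There is also a methodological difference in how flatness is obtained.  You propose to prove Cohen--Macaulayness first and then deduce flatness from equality of fiber dimensions; the paper instead proves reducedness of the special fiber directly (Lemma~\ref{Reducedness}) and then establishes \emph{topological} flatness by an explicit $\GL_{s'}$-orbit lifting argument on the model $TW=2\pi I_{s'}$, which together give honest flatness.  Normality then follows immediately from flat $+$ reduced special fiber $+$ smooth generic fiber, avoiding Serre's criterion (your $R_1+S_2$ route is fine too, and your codimension count is correct).  In short, your outline is sound, but the ``known reduced, Cohen--Macaulay determinantal model'' you are looking for is specifically the Pfaffian ring $k[T,W]/(T+T^t,\,W+W^t,\,TW)$, and the nontrivial input is the Pfaffian-ideal literature rather than the standard determinantal one.
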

\begin{proof}
As in the proof of \cite[Proposition 2.16]{BH}, it's enough to consider a local ring at $x \in X_{\varepsilon,s}$. The calculations detailed in the next section give that such a local ring is given by
\[
W_{\mathcal{O}}(k) [U_1, \dots, U_a,X,Y,Z, W ] / (W+W^t, \,\, (Z-Z^t+X^tY-Y^tX)W -2\pi I_{s'}),
\]
where $W_{\mathcal{O}}(k) = W(k) \otimes_{\mathbb{Z}_p} O_F  $ and the sizes of $ X,Y,Z, W $ are $\frac{r-s}{2}\times s', \frac{r-s}{2}\times s', s' \times s', s'\times s'$ respectively, and $a$ is an integer depending on $s$. Observe that
\[
\scalebox{1.1}{$
\frac{
    W_{\mathcal{O}}(k)[U,X,Y,T,S,W]
}{
    \left( W + W^t, \, (T + X^t Y - Y^t X) W - 2\pi I_{s'} \right)
}
\longrightarrow
\frac{
    W_{\mathcal{O}}(k)[U,X,Y,Z,W]
}{
    \left( W + W^t, \, (Z - Z^t + X^t Y - Y^t X) W - 2\pi I_{s'} \right)
}
$}
\]
given by $ T \mapsto Z - Z^t$ and $S \mapsto Z+Z^t$. Note that $T$ is skew-symmetric and $S$ is symmetric. We obtain the following isomorphism 
\[
\scalebox{1.1}{$
\begin{aligned}
\text{Spec}\frac{
    W_{\mathcal{O}}(k)[U,X,Y,Z,W]
}{
    \left( W + W^t, \, (Z - Z^t + X^t Y - Y^t X) W - 2\pi I_{s'} \right)
}
\cong \\
 \mathbb{A}^{a+\frac{s(s+1)}{2}}_{W_{\mathcal{O}}(k)} \times \text{Spec} \frac{
    W_{\mathcal{O}}(k)[X,Y,T,W]
}{
   \left( W + W^t, \, T+T^t,\, (T + X^t Y - Y^t X) W - 2\pi I_{s'} \right)
}
\end{aligned}
$}
\]
and the last affine scheme in the above isomorphism is isomorphic to 
\[
\text{Spec} \frac{
    W_{\mathcal{O}}(k)[X,Y,T,W]
}{
   \left( W + W^t, \, T+T^t,\, T W - 2\pi I_{s'} \right)
}
\]
given by $X \mapsto X$, $ Y \mapsto Y$ and $ T \mapsto T - (X^tY - Y^t X)$. So, from the above it's enough to study and prove the desired properties for the affine scheme
\begin{equation}
\Spec R = \text{Spec} \frac{
    W_{\mathcal{O}}(k)[X,Y,T,W]
}{
   \left( W + W^t, \, T+T^t,\, T W - 2\pi I_{s'} \right)
}.	
\end{equation}

From Lemma \ref{Reducedness}, we get that the special fiber of $\Spec R$ is reduced. To show that $\Spec R$ is flat, it is enough to show that it's topologically flat. We use a similar argument as in \cite[Remark 5.9]{DePa}. In particular, we have that $GL_n$ acts on $\Spec R$ by $(T,W) \rightarrow (uTu^t, (u^t)^{-1} W u)$. The action of $GL_n(k)$ on $(\Spec R)(k)$ has a finite number of orbits. After a suitable coordinate change by $u$ we get 
\[
T = \begin{pmatrix}
    0 & \begin{pmatrix}
        0 & T_0 \\
        0 & 0
    \end{pmatrix}  \\
    \begin{pmatrix}
        0 & 0 \\
        -T_0^t & 0
    \end{pmatrix} & 0
\end{pmatrix}, \,\, W =\begin{pmatrix}
    0 & \begin{pmatrix}
        0 & 0 \\
        W_0 & 0
    \end{pmatrix}  \\
    \begin{pmatrix}
        0 & -W_0^t \\
        0 & 0
    \end{pmatrix} & 0
\end{pmatrix}
\]
with $T_0, W_0$ non-degenerate. Observe that $T=-T^t$, $ W = - W^t$ and $TW = 0$. For any lifting $ \Tilde{T_0}, \Tilde{W_0}$, we construct 
\[
\scalebox{0.75}{$
\widetilde{T} =  
\begin{pmatrix}[cccc]
&&& \widetilde{T_0} \\ 
&& - 2\pi (\widetilde{W_0}^t)^{-1} &\\
& 2\pi (\widetilde{W_0})^{-1} &&\\
 -\widetilde{T_0}^t&&& 
\end{pmatrix}, \quad \widetilde{W} = 
\begin{pmatrix}[cccc]
&&& - 2\pi (\widetilde{T_0}^t)^{-1} \\ 
&& \widetilde{W_0} &\\
& -\widetilde{W_0}^t &&\\
  2\pi (\widetilde{T_0})^{-1}&&& 
\end{pmatrix}
$}
\]
where $ \widetilde{T} = -  \widetilde{T}^t $, $ \widetilde{W} = -  \widetilde{W}^t $ and $ \widetilde{T} \widetilde{W} = 2\pi I_{s'}.$ Thus, $\Spec R$ is topologically flat and from the above we deduce that $\Spec R$ is flat.

Lastly, normality of $\Spec R$ easily follows since $\Spec R$ is flat with reduced special fiber and smooth generic fiber.
\end{proof}
\begin{Lemma}\label{Reducedness}
 Assume that $B$ is a commutative reduced ring and $2$ is invertible in $B$. Also, assume that $m$ is even and set $B' =  \frac{
   B[T,W]
}{\left( W + W^t, \, T+T^t,\, T W \right)}$ where $W= (w_{i,j})$, $T = (t_{i,j})$ are matrices of size $m \times m$. Then $B'$ is a reduced ring.   
\end{Lemma}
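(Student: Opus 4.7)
I would reduce the lemma to the case where $B$ is an algebraically closed field of characteristic $\neq 2$, and then analyze the resulting scheme via its natural $GL_m$-action. Writing $B' = B \otimes_{\mathbb{Z}[\frac{1}{2}]} A_0$ for the universal ring
\[ A_0 := \mathbb{Z}[\tfrac{1}{2}][T,W]/(T+T^t,\, W+W^t,\, TW), \]
it suffices to establish that $A_0$ is flat over $\mathbb{Z}[\tfrac{1}{2}]$ (which amounts to torsion-freeness and can be checked by exhibiting a monomial basis) and that the geometric fibers $k[T,W]/(T+T^t, W+W^t, TW)$ are reduced for every algebraically closed field $k$ with $\mathrm{char}(k)\neq 2$; the standard descent of reducedness along flat morphisms with geometrically reduced fibers then yields the lemma.

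Over such $k$, set $X := \Spec k[T,W]/(T+T^t, W+W^t, TW)$ and exploit the action of $GL_m(k)$ on $X$ by $u \cdot (T,W) = (uTu^t,\, u^{-t}Wu^{-1})$, which preserves both the skew-symmetry conditions and the relation $TW = 0$. A simultaneous normal-form analysis for two alternating forms whose product vanishes shows that the orbits are parametrized by pairs $(a,b)$ with $a + b \leq m/2$ (where $2a = \mathrm{rk}(T)$, $2b = \mathrm{rk}(W)$) together with the dimension of $\ker T \cap \ker W$. The ``top'' orbits, those with $a+b = m/2$ and $\ker T \cap \ker W = 0$, are represented by
\[ (T_0, W_0) = (J_{2a} \oplus 0_{m-2a},\, 0_{2a} \oplus J_{m-2a}), \]
where $J_k$ is the standard non-degenerate $k\times k$ alternating form. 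A stabilizer computation and a direct Zariski tangent space computation at $(T_0, W_0)$ both yield dimension $\binom{m}{2}$, so $X$ is smooth along each top orbit. Consequently the irreducible components of $X$ are precisely $Y_a := \overline{O_{(a, m/2-a)}}$ for $a = 0, 1, \ldots, m/2$, each of pure dimension $\binom{m}{2}$, and $X$ is generically reduced.

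To upgrade generic reducedness to scheme-theoretic reducedness, I would show that $X$ is Cohen-Macaulay; combined with generic smoothness, Serre's criterion (conditions $R_0$ and $S_1$) then gives reducedness. For the Cohen-Macaulay property the plan is to exhibit, for each $a$, a smooth resolution $\tilde Y_a \to Y_a$ from the incidence variety
\[ \tilde Y_a = \{(T, W, K) : K \subset V \text{ of codimension } 2a,\; T(K)=0,\; \mathrm{Im}(W) \subset K\}, \]
which is naturally a vector bundle over an orthogonal Grassmannian and hence smooth and Cohen-Macaulay; a Kempf-type vanishing argument would transfer Cohen-Macaulay and rational-singularity properties to each $Y_a$, and gluing along the intersection loci $Y_a \cap Y_{a'}$ would give Cohen-Macaulayness of $X$.

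The hardest step will be verifying the Cohen-Macaulay property globally, particularly at points lying on several components $Y_a$ simultaneously, where local rings of $X$ fail to be regular. Should this direct CM approach prove too technical, fallback routes include: (i) directly establishing the primary decomposition $I = \bigcap_a I(Y_a)$, with each $I(Y_a)$ generated by Pfaffian rank conditions on $T$ and $W$ together with $TW = 0$; or (ii) ruling out embedded primes by a depth computation at the intersection strata, establishing condition $S_1$ without invoking full Cohen-Macaulayness.
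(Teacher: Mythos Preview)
Your approach via $\GL_m$-orbits and generic smoothness is a reasonable geometric strategy, and quite different from the paper's purely algebraic proof. The paper argues by induction on $m$: using Baetica's work on Pfaffian ideals of linear type, it produces an explicit $B$-free basis of $B'$ by standard tableaux pairs, reduces modulo the ideal generated by the maximal Pfaffian of $T$, shows that $w_{1,2}$ is a non-zerodivisor in the quotient, and observes that inverting $w_{1,2}$ yields (up to a polynomial extension) the size-$(m-2)$ case. No orbit analysis, resolution of singularities, or Cohen--Macaulayness enters.

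The genuine gap in your proposal is the passage from generic reducedness to full reducedness. Your ``gluing along the intersection loci $Y_a \cap Y_{a'}$'' is not a valid argument: Cohen--Macaulayness of each irreducible component $Y_a$ does \emph{not} imply Cohen--Macaulayness of their union $X$ --- two planes in $\mathbb{A}^4$ meeting at a point already give a counterexample. To run a Mayer--Vietoris style induction you would need each partial union $Y_0 \cup \cdots \cup Y_a$ to meet $Y_{a+1}$ in a Cohen--Macaulay scheme of codimension one, and establishing the scheme structure of those intersections already requires ruling out embedded primes, which is exactly what you are trying to prove. Your fallback (i), exhibiting the primary decomposition $I = \bigcap_a I(Y_a)$, is essentially equivalent to reducedness itself and hence not an independent route; fallback (ii) is plausible in principle but you give no mechanism for carrying out the depth computation at the deep strata. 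Finally, your flatness claim for $A_0$ over $\ZZ[\tfrac{1}{2}]$ via ``a monomial basis'' is unjustified: there is no evident monomial basis, and the free basis that does exist is precisely the standard-tableaux basis the paper invokes --- so this step already imports the algebraic machinery that makes the paper's inductive argument go through.
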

\begin{proof}
The proof uses the same steps that were used in \cite[\S 4]{Kotzev} for the symmetric case. As in loc. cit., the main ingredient of the proof is the existence of a $B$-free basis of the quotient ring $B'$. In our case, such a $B$-basis was formed by the ``standard tableaux pairs $ (C_i,D_j)$" in \cite[Prop. 1.6]{Ba}, such that each pair of tableaux $(C,D)$ can be written as a linear combination of standard tableaux pairs $(C_i,D_j)$ (see \cite[\S 1]{Ba} for the explicit definition of the standard pairs). The rest of the proof will follow by an induction on $m$.

If $m=2$, then it is easy to see that $B'$ is reduced. Assume that $m \geq 4$ and let $ J \subset B'$ be the ideal generated by the maximal Pfaffian of $T$. By Proposition \cite[Cor. 1.3]{Ba}, $J$ does not contain nilpotents and so it's enough to prove that $B_1:=B'/J$ is reduced. Note that $B_1$ has a $B$-basis given by the images of standard pairs $(C_i,D_i)$. From the construction of these pairs, see \cite[\S 1]{Ba}, it follows that the product of $w_{1,2}$ with any standard pair is again a standard pair and two different standard pairs remain different, when both multiplied by $w_{1,2}$. Thus, as in \cite[Lemma 4.2]{Kotzev}, we deduce that $w_{1,2}$ is a non-zerodivisor in $B_1$. 

Next, invert $w_{1,2}$ and mimick \cite{JP} (and \cite{Kotzev}) to get
\begin{equation}\label{reducesize}
B_1 [w_{1,2}^{-1}] \cong A[T',W']/ (W' + (W')^t, \, T'+(T')^t,\, T' W')
\end{equation}
where $T'$, $W'$ are matrices of size $ (m-2) \times (m-2)$ and $A = B[w_{1,2},w^{-1}_{1,2}, w_{1,3}, \dots, w_{1,n},\\  w_{2,3}, \dots, w_{2,n}]$. Consider the injective map $B_1 \rightarrow B_1 [w_{1,2}^{-1}]$. By the inductive hypothesis and (\ref{reducesize}), $B_1 [w_{1,2}^{-1}]$ is reduced and thus $B_1$ is also reduced.   
\end{proof}

\begin{Theorem}
For any signature $(n-s,s)$, the splitting model $\Mspl$ is Cohen-Macaulay over $O_F$. 	
\end{Theorem}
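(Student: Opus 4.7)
The plan is to reuse the local-ring computation from the proof of Theorem~\ref{flatEvenCase} and to check Cohen-Macaulayness one \'etale neighbourhood at a time. Since the Cohen-Macaulay property is \'etale-local and since the deepest stratum $X_{\varepsilon,s}$ lies in the closure of every other stratum by Proposition~\ref{prop40}, it suffices to prove that the local ring
\[
R \;=\; \frac{W_{\mathcal{O}}(k)[X,Y,T,W]}{\bigl(W+W^{t},\; T+T^{t},\; TW - 2\pi I_{s'}\bigr)}
\]
appearing at a point of $X_{\varepsilon,s}$ (after stripping off the polynomial factor $\mathbb{A}^{a+s(s+1)/2}_{W_{\mathcal{O}}(k)}$ already produced in the reduction) is Cohen-Macaulay.

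The next step is to use that $R$ is flat over the discrete valuation ring $W_{\mathcal{O}}(k)$ by Theorem~\ref{flatEvenCase}. For a flat algebra over a discrete valuation ring, Cohen-Macaulayness of the total space is equivalent to Cohen-Macaulayness of the special fibre. Since the special fibre splits off a polynomial ring in the entries of $X$ and $Y$, one is reduced to showing that
\[
\bar R_0 \;=\; \frac{k[T,W]}{\bigl(W+W^{t},\; T+T^{t},\; TW\bigr)}
\]
is Cohen-Macaulay. Geometrically, $\Spec \bar R_0$ is the affine scheme of pairs of $s'\times s'$ skew-symmetric matrices with vanishing product, a skew-symmetric analogue of the variety of complexes.

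To handle $\bar R_0$ I would mimic the inductive strategy of Lemma~\ref{Reducedness}. The base case $s'=2$ is a complete intersection in a polynomial ring, hence Cohen-Macaulay. For $s'\geq 4$, the element $\bar w_{1,2}$ is a non-zerodivisor in $\bar R_0$, and after inverting it the isomorphism \eqref{reducesize} presents $\bar R_0[\bar w_{1,2}^{-1}]$ as a Laurent polynomial extension of the analogous ring for size $s'-2$, which is Cohen-Macaulay by the inductive hypothesis. The main obstacle is transferring Cohen-Macaulayness from the localisation back to $\bar R_0$ itself, since being Cohen-Macaulay on a dense open is not enough in general. I would overcome this by verifying Serre's condition $(S_k)$ along the non-open strata using the dimension computations of Proposition~\ref{irredcomp} and the explicit closure relations of Proposition~\ref{prop40} to control depth at generic points of each stratum; alternatively, one can degenerate the defining ideal to its initial ideal in the standard monomial basis of \cite{Ba} and cite the known Cohen-Macaulayness of the resulting Stanley-Reisner ring.
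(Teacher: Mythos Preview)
Your reduction to proving that $\bar R_0 = k[T,W]/(W+W^{t},\, T+T^{t},\, TW)$ is Cohen--Macaulay is exactly right and coincides with the paper's reduction. The divergence is in how to establish that $\bar R_0$ itself is Cohen--Macaulay, and here your argument has a genuine gap.

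You correctly identify the obstacle: the inductive localisation trick borrowed from Lemma~\ref{Reducedness} proves Cohen--Macaulayness only on the open locus $\{w_{1,2}\neq 0\}$, and this does not propagate to the closed complement. Neither of your two proposed fixes is carried out. For the first, the dimension and closure computations of Propositions~\ref{irredcomp} and~\ref{prop40} control only codimension, not depth; verifying Serre's condition $(S_k)$ along the boundary strata would require an independent depth bound that is essentially the original problem. For the second, a Gr\"obner degeneration argument could in principle succeed, but you have not identified the initial ideal or checked that the resulting Stanley--Reisner ring is Cohen--Macaulay, and doing so would again rest on the same standard monomial machinery of \cite{Ba} that the paper exploits more directly.

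The paper bypasses the induction entirely by invoking a structural result of Baetica \cite[\S 2]{Ba}: the ring $\bar R_0$ is isomorphic to $S(I)/(a)$, where $I$ is the ideal generated by the $(s'-2)$-order Pfaffians of the generic skew-symmetric matrix $T$, $S(I)$ is its symmetric algebra, and $a$ is a non-zerodivisor in $S(I)$. By \cite[Prop.~2.1]{Ba} the symmetric algebra coincides with the Rees algebra $\mathcal{R}(I)$, and the latter is known to be Cohen--Macaulay by \cite{Ba2} (see also \cite{BR}). Hence $S(I)$ is Cohen--Macaulay, and so is its quotient by the regular element $a$. This delivers the Cohen--Macaulayness of $\bar R_0$ in one stroke, with the hard commutative algebra outsourced to the literature on Pfaffian ideals.
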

\begin{proof}
From all the above discussion, it's enough to prove that $\Spec R$ is Cohen-Macaulay and since $\Spec R$ is $O_F$-flat, it is enough to prove that its special fiber 
\[
\overline{\Spec R} = \mathbb{A}^{(r-s)s'}\times \text{Spec} \frac{
    W_{\mathcal{O}}(k)[T,W]
}{
   \left( W + W^t, \, T+T^t,\, T W \right)
}
\]
is Cohen-Macaulay. Set $R'=W_{\mathcal{O}}(k)[T,W]/  \left( W + W^t, \, T+T^t,\, T W \right)$. From \cite[\S 2]{Ba}, we obtain the isomorphism
 \[
  R' \simeq S(I)/(a)
   \] 
where the ideal $I$ is generated by the $(s'-2)$-order Paffians of $T$, $S(I)$ is the symmetric algebra of $I$ and $a$ is a non-zero divisor in $S(I)$ (see loc. cit. for the explicit definition of these terms).
By \cite[Prop. 2.1]{Ba} the symmetric algebra $S(I)$ is isomorphic to the Rees algebra $\calR(I)$ and by  \cite{Ba2} (see also \cite{BR}) $\calR(I)$ is Cohen-Macaulay. Thus, $S(I)$ is Cohen-Macaulay and so is $R'$ since $a$ is a non-zero divisor.
\end{proof}

\subsection{Local ring calculations}

In this section, we will compute the local ring at a point located in the closed stratum $X_{\varepsilon, s}$.

\begin{Proposition}
Let $x$ be a point in $X_{\varepsilon, s}$. Then the local ring at $x$ is isomorphic to the localization at $W=Z =\pi= 0$ of
\[
W_{\mathcal{O}}(k) [U_1, \dots, U_a,X,Y,Z, W ] / (W+W^t, \,\, (Z-Z^t+X^tY-Y^tX)W -2\pi I_{s'}),
\]
where $W(k)$ is the Witt ring of $k$ and $W_{\mathcal{O}}(k) = W(k) \otimes_{\mathbb{Z}_p} O_F  $. The sizes of $ X,Y,Z, W $ are $\frac{r-s}{2}\times s', \frac{r-s}{2}\times s', s' \times s', s'\times s'$ respectively, and $a$ is an integer depending on $s$.
\end{Proposition}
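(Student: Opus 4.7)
The plan is to produce an explicit affine chart of $\Mspl$ around the closed-stratum point $x\in X_{\varepsilon,s}$ using matrix coordinates adapted to the filtration $0\subset t\calF_m \subset \calG_m \subset \calG_m^{\bot'} \subset t\Lambda_m$ at $x$, and then translate the defining conditions of $\Mspl$ into the relations appearing in the statement. As a first step, fix an $O_{F_0}$-basis of $\Lambda_m$ that simultaneously refines this filtration and puts the modified pairing $\{\ ,\ \}$ on $t\Lambda_m$ in the standard skew-symmetric form used in the proof of Proposition \ref{prop40} specialized to $h=\varepsilon$, $l=s$; in this basis the symmetric form $(\ ,\ )$ on $\Lambda_m$ becomes block anti-diagonal.

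In a neighborhood of $x$, a pair $(\calF_m,\calG_m)$ is presented as the row span of matrices obtained by perturbing the pivot blocks at $x$ by indeterminates. The splitting condition $(t+\pi)\calF_m\subset \calG_m$ together with $(t-\pi)\calG_m=0$ is used to express the matrix of $\calG_m$ entirely in terms of that of $\calF_m$, so the whole pair is encoded by the $\calF_m$-matrix alone. The free entries of the latter split into two disjoint groups: unconstrained variables $U_1,\dots,U_a$, on which no relation will be imposed (these correspond, roughly, to deformation directions along the stratum together with blocks of $\calF_m$ that do not meet the symmetric form), and constrained blocks $X,Y,Z,W$ of the stated sizes, where $X$ and $Y$ record the deformation of $\calF_m$ transverse to $t\Lambda_m$ in the two middle rank-$\frac{r-s}{2}$ blocks, $Z$ encodes the deformation of the rank-$s'$ block of $\calF_m$ inside $t\Lambda_m$, and $W$ parametrizes the component of $\calF_m$ through the first $s'$ coordinates of $\Lambda_m$.

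With this parametrization conditions (1) and (3) of Definition \ref{DefSplSpin} are automatic, and the spin condition (4) is automatic as well because at $x$ the rank of $t$ on $\calF_m$ equals $\varepsilon$. The isotropy condition $\calF_m=\calF_m^{\bot}$ then splits into two disjoint blocks: the rank-$s'$ diagonal block gives $W+W^t=0$, while the mixed block gives $(Z-Z^t+X^tY-Y^tX)\,W=2\pi I_{s'}$. The skew combinations $Z-Z^t$ and $X^tY-Y^tX$ appear because the form $(\ ,\ )$ becomes skew after the shift by $t$, and the constant $2\pi I_{s'}$ on the right-hand side arises from pairing a $\calF_m$-vector with its $\pi$-shifted counterpart, using that $(t+\pi)-(t-\pi)=2\pi$ together with the splitting condition to fix the integral normalization. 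Localizing the resulting affine ring at the prime ideal $W=Z=\pi=0$ then yields the local ring asserted in the statement.

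The main obstacle is the third step, namely verifying that the isotropy computation produces precisely the combination $Z-Z^t+X^tY-Y^tX$ and the constant $2\pi I_{s'}$. This is a careful but finite matrix computation in which every block of the symmetric form must be tracked with the correct integral normalization, since the precise shape of the quadratic expression in $X,Y,Z$ and the scalar in front of $I_{s'}$ are dictated by how the $t$- and $\pi$-actions interact inside $\Lambda_m$; these are the exact ingredients exploited in the subsequent proof of Theorem \ref{flatEvenCase} to deduce flatness, normality, and reducedness of $\Mspl$.
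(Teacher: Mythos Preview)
Your proposal contains a genuine conceptual error: you claim that the splitting conditions $(t+\pi)\calF_m\subset\calG_m$ and $(t-\pi)\calG_m=0$ allow you to express $\calG_m$ entirely in terms of $\calF_m$. This is false. The datum $\calG_m$ is precisely the extra structure carried by the splitting model that is \emph{not} determined by $\calF_m$; indeed, at a point of $X_{\varepsilon,s}$ one has $\dim t\calF_m=\varepsilon\in\{0,1\}$, so the constraint $(t+\pi)\calF_m\subset\calG_m$ pins down at most one line of the rank-$s$ space $\calG_m$, and the fibre of the forgetful map $\tau$ over such a point is a positive-dimensional isotropic Grassmannian. Consequently your interpretation of $X,Y,Z$ as ``deformations of $\calF_m$'' (with $X,Y$ transverse to $t\Lambda_m$) cannot be right either.

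The paper proceeds in the opposite direction. One first parametrizes $\calG_m$ freely as a rank-$s$ subspace of $(t+\pi)\Lambda_m$ via a matrix $M=[I_s;X;Y;Z]^t$ with $X,Y$ of size $\tfrac{r-s}{2}\times s$ and $Z$ of size $s\times s$ (note: size $s$, not $s'$, at this stage). The inclusion $\calG_m^{\bot'}\subset\calF_m$ then forces $\calF_m$ to contain an explicit rank-$(r-s)$ block depending only on $X,Y$, and the remaining rank-$s$ part of $\calF_m$ is parametrized by an auxiliary matrix $B$. The isotropy condition $\calF_m=\calF_m^\bot$ yields $A^tB+B^tA=0$ and $(Z-Z^t+X^tY-Y^tX)B=2\pi A$, where $A=I_s$ if $s$ is even. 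For $s$ odd there is a further nontrivial step that your sketch omits entirely: one must write $A,B,Z,X,Y$ in $1+(s-1)$ block form, solve the resulting equations for the first row/column (this produces the free variables $U_1,\dots,U_a$ with $a=r+s-1$), and only then do the surviving relations collapse to the size-$s'$ equations in the statement.
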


\begin{proof}
We will give the notations as in the previous section. A lift for $\calG_m$ is given by
$$\widetilde{\calG_m} = \left [\ 
\begin{matrix}[c]
\pi M \\
M 
\end{matrix}\ \right], \quad\text{where}~ M = \left [\ 
\begin{matrix}[c]
I_s\\ 
X\\
Y\\
Z  
\end{matrix}\ \right].$$
The size of the matrices $X,Y$ is $\frac{r-s}{2} \times s$, and $Z$ is a square matrix of size $s$. The space $\widetilde{\calF}_m$ will automatically contain 
$$ \left [\ 
\begin{matrix}[c]
-\pi N \\
N 
\end{matrix}\ \right], \quad\text{where}~ N = \left [\ 
\begin{matrix}[cc]
0 & 0\\ 
I_{\frac{r-s}{2}} & 0 \\
0 & I_{\frac{r-s}{2}}\\
Y^t & - X^t  
\end{matrix}\ \right].$$
Finally, one adds to $\widetilde{\calF}_m$
$$\left [\ 
\begin{matrix}[c]
MB - \pi N_1 \\
N_1 
\end{matrix}\ \right], \quad\text{where}~ N_1 =
\left [\ 
\begin{matrix}[c]
0\\ 
0\\
0\\
A  
\end{matrix}\ \right].$$
The matrices $A, B$ are square matrices of size $s$. When $s$ is even, one has $A = I_s$, and $B$ is a matrix with indeterminate coefficients. When $s$ is odd, one has
\[
A = \left [\ 
\begin{matrix}[cc]
a_0 & A_1 \\
0 & I_{s-1}
\end{matrix}\ \right], \qquad B = \left [\ 
\begin{matrix}[cc]
1 & 0 \\
B_0 & B_1
\end{matrix}\ \right].
\]
where $B_1$ is a square matrix of size $s-1$. \\
The only condition to check for $\widetilde{\calF}_m$ is the isotropy condition, from which we have
$$A^t B + B^t A = 0, \qquad  (Z-Z^t+X^tY-Y^tX)B = 2 \pi A. $$
One gets the desired result when $s$ is even by setting $W = B$ and $a=0$. \\
When $s$ is odd, one gets
$$a_0 = 0, \quad A_1 = - B_0^t, \quad B_1 + B_1^t = 0.$$
Let $Q$ be the matrix $Z-Z^t+X^tY-Y^tX$, and write 
$$Q = \left [\ 
\begin{matrix}[cc]
0 & -Q_0^t \\
Q_0 & Q_1
\end{matrix}\ \right]$$
where $Q_1$ is a square matrix of size $s-1$, and $Q_0$ is a vector of size $s-1$. \\
The remaining equations are $Q_1 B_1 = 2\pi I_{s-1}$ and  $Q_0 = - Q_1 B_0$. Let us decompose the matrices
$$Z = \left [\ 
\begin{matrix}[cc]
z & Z_2 \\
Z_1 & Z_3
\end{matrix}\ \right] \qquad X = \left [\ 
\begin{matrix}[cc]
X_1 & X_2 
\end{matrix}\ \right] \qquad Y = \left [\ 
\begin{matrix}[cc]
Y_1 & Y_2
\end{matrix}\ \right] $$
where $Z_3$ is a square matrix of size $s-1$, $X_1, Y_1$ are vectors of size $\frac{r-s}{2}$ and $X_2, Y_2$ are $\frac{r-s}{2} \times (s-1)$ matrices. Then the first equation is 
$$(Z_3 - Z_3^t + X_2^t Y_2 - Y_2^t X_2) B_1 = 2 \pi I_{s-1}$$
which is of the desired form (setting $W=B_1$). The second equation gives the value of $Z_1$ in terms of $Z_2, X, Y, B_0$. One gets the result, with $a=r+s-1$, the extra variables corresponding to $z, Z_2, X_1, Y_1, B_0$.
\end{proof}

 \begin{Remark}{\rm
The spin condition is automatically satisfied. Indeed, the integer $h$ is given by the rank of the matrix $B$. It is thus equal to $\varepsilon + rk W$, and since $W$ is skew-symmetric, one has $h = s \mod 2$. 

The spin condition appears naturally: indeed if $s$ is odd, the same computation holds for the local rings at points in $X_{0,s}$. However, the equation $(Z-Z^t+X^tY-Y^tX)W = 2 \pi I_s$ with $W$ skew-symmetric cannot be verified in characteristic $0$ since $W$ cannot be invertible.
}\end{Remark}

\section{Applications}\label{appl}

\subsection{Relation of $X_{h,l}$ with Schubert cells} In this section, we will study the relationship between the stratification of the splitting model $\Mspl_{I}$ over the special fiber ($\pi=0$), described by the strata $X_{h,l}$ (see Definition \ref{defhl}), and the stratification of the local model $\Mloc_{I}$ over the special fiber, given by the Schubert cells indexed by the $\mu$-admissible set (see \cite[\S 2]{PR}). 

We focus on two special maximal parahoric subgroups for general signature $(n-s,s)$: the self-dual case and the $\pi$-modular case. The splitting model for the selfdual case, i.e. $I = \{0\}, n=2m+1$, has been studied in \cite{BH}. In this paper, we consider the splitting model for the $\pi$-modular case, i.e. $I= \{m\}$ and $n =2m$. In both cases $\Mspl_{I}$ is flat and over the special fiber we have 
\[
\Mspl_{\{0\}} \otimes k=\coprod_{0\leq h\leq l\leq s} X_{h,l}, \quad \Mspl_{\{m\}}\otimes k=\coprod_{\substack{0\leq h\leq l\leq s \\ h,l \, \equiv \, s \, \text{mod} \, 2}} X_{h,l}.
\]
On the other hand, for the above two level subgroups, the flat local model $ \Mloc_{I}$ has been studied in \cite{PR} and its special fiber decomposes into a disjoint union of certain $\calG_I\otimes k$-equivarient Schubert cells $C_w$ parametrized by the $\mu$-admissible set
\[
\Mloc_{I} \otimes k = \coprod_{w \in \text{Adm}_I(\mu)} C_w.
\]

\begin{Proposition}\label{prop 61}
a) Assume $I = \{0\}$ and pick $C_{w_0} \in \Mloc_{\{0\}} \otimes k$. There exists a unique $ 0 \leq h_0 \leq s$ such that
\[\tau^{-1}(C_{w_0})  =  \coprod_{0\leq h_0\leq l\leq s} X_{h_0,l}.\] 

b) Assume $I = \{m\}$ and pick $C_{w_0} \in \Mloc_{\{m\}} \otimes k$. There exists a unique $ 0 \leq h_0 \leq s$ with $h_0 \, \equiv \, s \, \text{mod} \ 2$ such that
\[\tau^{-1}(C_{w_0})  =  \coprod_{\substack{0\leq h_0\leq l\leq s \\ l \, \equiv \, s \, \text{mod} \, 2}} X_{h_0,l}.\] 
    
\end{Proposition}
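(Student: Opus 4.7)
The plan is to exploit the $\calG_I \otimes k$-equivariance of the forgetful map $\tau: \Mspl \otimes k \to \Mloc \otimes k$ together with the observation that $h = \dim_k(t\calF_m)$ depends only on $\calF_m$. First, the Schubert cells $C_w \subset \Mloc \otimes k$ are, by construction, the $\calG_I \otimes k$-orbits in the special fiber, and the assignment $\calF_m \mapsto \dim_k(t\calF_m)$ is invariant under this action since $g \cdot (t \calF_m) = t \cdot (g \calF_m)$ for any $g$. Consequently, $h$ is constant on each cell $C_{w_0}$ and I would \textit{define} $h_0 = h_0(w_0)$ to be this common value. In case (b), the spin condition forces $h \equiv s \, \text{mod} \, 2$ throughout $\Mspl \otimes k$, and hence $h_0 \equiv s \, \text{mod} \, 2$ as well. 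This argument immediately gives the inclusion
\[
\tau^{-1}(C_{w_0}) \;\subset\; \coprod_{\substack{0 \le h_0 \le l \le s}} X_{h_0, l},
\]
with the parity constraint $l \equiv s \, \text{mod} \, 2$ added in case (b).

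For the reverse inclusion, the decisive ingredient is an injectivity statement: the assignment $w \mapsto h_0(w)$ is injective on the $\mu$-admissible set $\text{Adm}_I(\mu)$. Once injectivity is known, any $(\calF_m, \calG_m) \in X_{h_0, l}$ satisfies $\dim_k(t\calF_m) = h_0$, and then $\calF_m$ must lie in the unique Schubert cell carrying this $h$-invariant, namely $C_{w_0}$. To verify injectivity I would work with the explicit realization of admissible elements as lattices in $V \otimes k$, pick a standard representative $\calF_m^{(w)}$ in each $C_w$, and compute $\dim_k(t \calF_m^{(w)})$ directly. In case (a) this reduces to the analysis already carried out in \cite{BH}, while in case (b) the analogous computation uses the description of $\text{Adm}_{\{m\}}(\mu_{r,s})$ from \cite{PR}; in particular, one shows that $\text{Adm}_I(\mu)$ is in natural bijection with the set of valid $h_0$-values, namely $\{0,1,\ldots,s\}$ in case (a) and $\{h : 0 \le h \le s,\ h \equiv s \, \text{mod} \, 2\}$ in case (b).

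Finally, nonemptiness of $X_{h_0, l} \cap \tau^{-1}(C_{w_0})$ for every valid $l$ reduces to nonemptiness of the strata themselves, which is supplied by Proposition \ref{irredcomp} (and its self-dual analogue in \cite{BH}). Combining the equivariance-based inclusion, the injectivity of $w \mapsto h_0(w)$, and nonemptiness of the relevant strata then yields the claimed equality of sets. The main obstacle is the injectivity/bijection between $\text{Adm}_I(\mu)$ and the set of valid $h_0$-values: the equivariance argument is essentially formal, and the nonemptiness is already in hand, so the content of the proof is really this combinatorial matching, which is the most computational step.
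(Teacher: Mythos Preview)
Your approach is correct, but it differs from the paper's in the key step for the reverse inclusion. Both arguments share the forward inclusion via $\calG_I$-invariance of $h = \dim_k(t\calF)$. For the reverse inclusion, the paper does \emph{not} argue via injectivity of $w \mapsto h_0(w)$; instead it proves that $\calG_I \otimes k$ acts transitively on each stratum $X_{h,l}$, by constructing an explicit basis $f_1,\dots,f_n$ adapted to the flag $t\calF \subset \calG \cap \calG^{\bot'} \subset \calG$ and to the modified pairing. Transitivity forces $\tau(X_{h,l})$ to be a single Schubert cell; then, for a fixed $\calF \in C_{w_0}$, the paper reuses the basis constructions from the proof of Proposition~\ref{prop40} to build a $\calG$ with $(\calF,\calG) \in X_{h_0,l}$ for every valid $l$, pinning down $\tau(X_{h_0,l}) = C_{w_0}$.

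Your route inverts the logical order: you take the bijection between $\text{Adm}_I(\mu)$ and the valid $h_0$-values as the input, whereas the paper deduces this bijection \emph{afterwards} (in the lemma following this proposition) as a consequence. Your argument is more combinatorial and leans on the explicit description of the admissible set from \cite{PR}; the paper's argument is more geometric and yields the transitivity of $\calG_I \otimes k$ on $X_{h,l}$ as a byproduct, which is of independent interest. Both are valid; the computational weight is comparable, just placed differently.
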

\begin{proof}
The proof is similar for both (a) and (b). First observe that the parahoric subgroup $\calG_I\otimes k$ acts transitively on the strata $X_{h,l}$  and on the Schubert cells $ C_w $. Indeed, let $x$ be a point in $X_{h,l} (k)$, which corresponds to spaces $\calF_m, \calG_m$ inside $\Lambda_m$. Let $f_1, \dots, f_h$ be a basis for the complement of $\mathcal{F}_m \cap t \Lambda_m$ inside $\mathcal{F}_m$. This family is isotropic, since $\mathcal{F}_m$ is totally isotropic. This gives a basis $t f_1, \dots, t f_h$ of $t \mathcal{F}_m$, that one completes in a basis $tf_1, \dots, t f_l$ for $\calG_m\cap \calG_m^{\bot'}$. This family is isotropic for the modified pairing. One can complete this family into a basis $tf_1, \dots, t f_n$ for $t \Lambda_m$ such that $\mathcal{G}_m$ is generated by $t f_1, \dots, tf_s$, and the matrix of the modified pairing is
$$ \left [\ 
\begin{matrix}[cccccc]
& & & & & I_h\\ 
& & & & I_{l-h} &\\
 & & I_{s-l} & & & \\
 & & & I_{r-l} & & \\
 & I_{l-h} & & & &  \\
I_h & & & & & 	
\end{matrix}\ \right],$$
in case (a) and
$$ \left [\ 
\begin{matrix}[cccccc]
& & & & & -I_h\\ 
& & & & - I_{l-h} &\\
 & & A_{s-l} & & & \\
 & & & A_{r-l} & & \\
 & I_{l-h} & & & &  \\
I_h & & & & & 	
\end{matrix}\ \right],$$
in case $(b)$, where $A_{2 \kappa} = \left [\ 
\begin{matrix}[cc]
0 & - I_\kappa \\
I_\kappa & 0 \end{matrix}\ \right]$. All is left to do is to find the vectors $f_{h+1}, \dots, f_n$, whose multiplication by $t$ gives $t f_{h+1}, \dots, t f_n$, and such that $(f_i, f_j)=0$ (this is the original pairing). This is done inductively; for example if $f_{h+1}$ is a vector whose multiplication by $t$ give $t f_{h+1}$, let $\alpha_i = (f_i, f_{h+1})$ for $1 \leq i \leq h+1$. If $f_{h+1}' = f_{h+1} + tx$, then the equations $(f_i, f_{h+1}') = 0$ are equivalent to $(f_i, tx) = - \alpha_i$, or $\{tf_i, tx\} = - \alpha_i$. In case (b), one also requires that $(f_{h+1}', f_{h+1}') = 0$, which gives $\alpha_{h+1} + 2 (tx, f_{h+1}) = 0$, or $2 \{tx, tf_{h+1} \} = - \alpha_{h+1}$. In all cases, this is indeed possible. 

Since, $\tau$ is a surjective and $\scrG_I\otimes k$-equivariant morphism we get that there is a pair $(h_0,l) $ such that $ \tau(X_{h_0,l}) = C_{w_0}$. Since $\tau$ is the forgetful morphism which is given by $(\mathcal{F},\mathcal{G}) \mapsto \mathcal{F}$ we deduce that all $\mathcal{F} \in C_{w_0}$ have $\text{rk}(t\mathcal{F} ) = h_0.$ Next, pick an $\mathcal{F} \in C_{w_0}$. Then, as in the proof of Proposition \ref{prop40} (see also the proof \cite[Prop. 2.16]{BH} for the selfdual case), we can always pick a certain basis and construct a $\mathcal{G}$ with $ (\mathcal{F},\mathcal{G}) \in X_{h_0,l}$ for any $ h_0 \leq l \leq s $ modulo the spin condition in case (b). 
\end{proof}

\subsection{Demazure Resolutions} In this section, we will give affine Demazure resolutions of Schubert varieties by considering closed subschemes in the splitting models. Note that Richarz considered similar resolutions in the almost $\pi$-modular case, i.e., $n=2m+1, I=\{m\}$ in \cite{Richarz}. In the presentation of the material we follow \cite{PR}, \cite{Richarz}.

Assume that  $k\llp u\rlp/k\llp t\rlp$ is a quadratic ramified extension with ring of integers $k\lp u\rp/k\lp t\rp$ with $u^2=t$. Set $\Gamma=\Gal(\llp u\rlp/k\llp t\rlp)$. We define the standard lattices $\lambda_i$:
\[
\lambda_i=\text{span}_{k\lp u\rp} \{u^{-1}e_1, \dots, u^{-1}e_i, e_{i+1}, \dots, e_n\}.
\]
For $i\in \ZZ$, we fix isomorphisms compatible with the action of $\pi\otimes 1$ resp. $u\otimes 1$
\[
\lambda_i\otimes_{k\lp t\rp}k\simeq \Lambda_i\otimes_{O_{F_0}}k.
\]
Let $R$ be a $k$-algebra. For any $R$-valued point of the special fiber of $\Mspl$ (resp. $\Mloc$), we have
\begin{equation}\label{eq 601}
0\subset \calG_i\subset \calF_i\subset (\Lambda_i\otimes_{O_{F_0}}k)\otimes_k R=\lambda_i\otimes_{k\lp t\rp}R.	
\end{equation}
Let $\calL_{\calF_i}$ resp. $\calL_{\calG_i}$ be the inverse image of $\calF_i$ resp. $\calG_i$ under the canonical projection
\[
\lambda_i\otimes_{k\lp t\rp} R\lp t\rp\rightarrow \lambda_i\otimes_{k\lp t\rp} R.
\]
By using (\ref{eq 601}), there is a natural embedding from $\Mloc_I\otimes k$ to the corresponding twisted affine flag variety (see \cite{PR3} for details). In particular, when $I=\{0\}$ or $I=\{m\}$, this embedding maps to the twisted affine Grassmannian.
Let $\scrG_{0}$ (resp. $\scrG_{m}$) be the parahoric subgroup with $I=\{0\}$ for $n=2m+1$ (resp. $I=\{m\}$ for $n=2m$). Let
\[
\Gr_{\scrG_0}=LGU_n/L^+\scrG_0,\quad \Gr_{\scrG_m}=LGU_n/L^+\scrG_m
\]
be the corresponding twisted affine Grassmannians. By \cite[Theorem 4.1]{PR3}, there are functional bijection between $\Gr_{\scrG_0}(R)$ (resp. $\Gr_{\scrG_m}(R)$) and sets of lattices:
\[
\begin{array}{l}
\Gr_{\scrG_0}(R)=\{\calL_0\subset R\llp u\rlp^n\mid \calL_0^\vee=\calL_0 \},\\
\Gr_{\scrG_m}(R)=\{\calL_m\subset R\llp u\rlp^n\mid \calL_m^\vee=u\calL_m \}.
\end{array}
\]
We can extend the above lattices to partial selfdual periodic $R\lp u\rp$-lattice chains. More precisely, we have $\cdots\subset u\calL_0\subset \calL_0\subset u^{-1}\calL_0\subset \cdots$ (resp. $\cdots\subset u\calL_m\subset \calL_m\subset u^{-1}\calL_m\subset \cdots$) with $\calL_0\subsetneq u^{-1}\calL_0^\vee=u^{-1}\calL_0$ (resp. $\calL_m= u^{-1}\calL_m^\vee\subsetneq u^{-1}\calL_m$).

As in \cite[\S 1]{PR3}, we denote by $T$ the standard maximal torus of $G$ and by $S$ the maximal split torus of $T$. Using $G(F)\simeq \GL_n(F)\times F^\times$, we identify $X_*(T)$ with $\ZZ^n\times \ZZ$, thus $\mu_{r,s}=(1^{(s)}, 0^{(n-s)},1)$. The image $\lambda_s$ of $\mu_{n-s,s}$ in $X_*(T_{ad})_{\Gamma}=\ZZ^m$ is equal to $(1^{(s)}, 0^{(m-s)})$. The admissible set for the selfdual case is described in \cite[\S 2]{PR}:  
\[
{\rm Adm}_0(\mu_{n-s,s})=\{\lambda_s> \lambda_{s-1}>\cdots \lambda_{1}>\lambda_0=0\}.
\] 
Similarly, for $\pi$-modular case, we have
\[
{\rm Adm}_m(\mu_{n-s,s})=\{\lambda_s> \lambda_{s-2}>\cdots \}.
\]
with the chain ending in $\lambda_1$ if $s$ is odd and in $\lambda_0$ if $s$ is even.

For $i=0,\dots, m$, define
\[
e^{-\lambda_i}:={\rm diag} (u^{(i)}, 1^{(m-i)}, (-1)^i, 1^{(m-i)}, (-u^{-1})^{(i)})
\]
if $n$ is odd, and 
\[
e^{-\lambda_i}:={\rm diag} (u^{(i)}, 1^{(m-i)}, 1^{(m-i)}, (-u^{-1})^{(i)})
\]
if $n$ is even. By \cite[\S 8]{PR3}, we have

\begin{Proposition}
The Schubert variety $S_i\subset \Gr_{\scrG_0}$ (resp. $S_i\subset \Gr_{\scrG_m}$) has dimension $i(n-i)$. It is set theoretically a disjoint union of $L^+\scrG_{0}$ (resp. $L^+\scrG_{m}$)-orbits given by 
\[
S_i=\coprod_{k=0}^i L^+\scrG_0 e^{-\lambda_k} L^+\scrG_0/L^+\scrG_0,
\]
\[
({resp.}\quad S_i=\coprod_{k=s ~{\rm mod}~ 2}^i L^+\scrG_m e^{-\lambda_k} L^+\scrG_m/L^+\scrG_m),
\]
equipped with the reduced scheme structure.
\end{Proposition}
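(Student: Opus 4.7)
The plan is to transport the Schubert variety $S_i$ through the lattice-theoretic descriptions of $\Gr_{\scrG_0}$ and $\Gr_{\scrG_m}$ recalled above and then apply the standard Bruhat-type decomposition of $L^+\scrG_I$-orbits. By construction, the point $e^{-\lambda_k}$ corresponds to the lattice obtained from the standard base lattice by the prescribed diagonal scaling, and its $L^+\scrG_I$-orbit parametrizes lattices whose elementary divisors relative to the base lattice are exactly those of $\lambda_k$. In particular $S_i$ is the scheme-theoretic closure of $L^+\scrG_I\cdot e^{-\lambda_i}L^+\scrG_I/L^+\scrG_I$ endowed with the reduced structure, and the required stratification amounts to identifying precisely which orbits appear in this closure.

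For the orbit decomposition, I would combine the description of $\text{Adm}_I(\mu_{n-s,s})$ recalled just before the statement with the general fact that closures of $L^+\scrG_I$-orbits in the twisted affine flag variety are unions of smaller orbits indexed by the Bruhat order on the extended affine Weyl group. In the self-dual case $I=\{0\}$ the admissible set is the linear chain $\lambda_s>\lambda_{s-1}>\cdots>\lambda_0=0$, so the closure of the orbit through $e^{-\lambda_i}$ contains the orbits through $e^{-\lambda_k}$ for every $0\le k\le i$ and no others. In the $\pi$-modular case $I=\{m\}$, the parity restriction $k\equiv s\pmod 2$ is forced by the duality constraint $\calL_m^\vee=u\calL_m$: the elementary divisors of $\calL_m$ relative to the base lattice must occur in symmetric pairs under the involution defined by the hermitian form, which excludes the $\lambda_k$ of opposite parity.

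For the dimension, I would use the standard formula
\[
\dim L^+\scrG_I\cdot e^{-\lambda_i}L^+\scrG_I/L^+\scrG_I = \langle 2\rho_I,\lambda_i\rangle,
\]
where $\rho_I$ denotes half the sum of the positive relative roots of $GU_n$ that are not roots of the Levi subgroup attached to $\scrG_I$. Substituting $\lambda_i=(1^{(i)},0^{(m-i)})$ into the root system of the ramified unitary group yields $i(n-i)$ in both cases. A useful cross-check is to exhibit an iterated affine Demazure resolution realized by a reduced expression of the translation $e^{-\lambda_i}$ in the affine Weyl group, whose length recovers the same integer $i(n-i)$.

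The main obstacle I anticipate is the $\pi$-modular case: one must rigorously verify that only the $\lambda_k$ with $k\equiv s\pmod 2$ contribute and that they are nested exactly as claimed, rather than merely checking that they are contained in the closure. This requires a careful analysis of the Iwahori-Bruhat decomposition for the nonsplit ramified unitary affine Weyl group, essentially the content of \cite[\S 8]{PR3}, along with a check that the relevant wall-crossings preserve the duality $\calL_m^\vee=u\calL_m$ at each intermediate step.
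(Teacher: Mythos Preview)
The paper does not give an independent proof of this proposition at all: its entire argument is the single line ``See \cite[\S 8]{PR3}.'' Your sketch is therefore more detailed than what the paper provides, but it is aligned with the paper in that you also identify \cite[\S 8]{PR3} as the source of the substantive content (the Bruhat decomposition for the ramified unitary affine Weyl group and the parity phenomenon in the $\pi$-modular case). In that sense there is nothing to compare: both you and the authors treat this as a known fact imported from Pappas--Rapoport.

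One small caution on your write-up: your heuristic that the parity restriction in the $\pi$-modular case is ``forced by the duality constraint $\calL_m^\vee=u\calL_m$'' is suggestive but not quite the mechanism. The parity comes from the structure of the Iwahori--Weyl group of the ramified \emph{even} unitary group (type $B$ versus type $C$ affine root system, reflecting the two $\SO_{2n}$-orbits on maximal isotropic subspaces), which is exactly what \cite[\S 8]{PR3} works out; the duality condition alone does not obviously exclude the wrong-parity $\lambda_k$ without that group-theoretic input. Since you already flag this as the main obstacle and point to the correct reference, this is a matter of phrasing rather than a gap.
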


Denote by $C_i=L^+\scrG_0 e^{-\lambda_i} L^+\scrG_0/L^+\scrG_0 $ (resp. $C_i=L^+\scrG_m e^{-\lambda_i} L^+\scrG_m/L^+\scrG_m$) the Schubert cell in $S_i$. 
\begin{Lemma}
There is a natural embedding $\Mloc_{I}\otimes k\hookrightarrow \Gr_{\scrG_I}$ given by $\calF\mapsto u^{-1}\calL_{\calF}$ for $I=\{0\}, n=2m+1$ or $I=\{m\}, n=2m$. The inverse image of $C_i$ is
\[
\{ \calF \in \Mloc_{I}\otimes k\mid rank(t\calF)=i\}.
\]
\end{Lemma}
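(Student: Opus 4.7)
\emph{Proof proposal.} The plan is to verify the two assertions separately: (I) that $\calF\mapsto u^{-1}\calL_\calF$ defines a closed immersion into $\Gr_{\scrG_I}$; and (II) that its intersection with $C_i$ consists precisely of those $\calF$ for which $\rank(t\calF)=i$, bearing in mind that $t\calF$ refers to the local-model action, namely multiplication by $u=\pi$.

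For (I), since $\calF$ is an $O_F\otimes R$-submodule of $\lambda_I\otimes_{k\lp t\rp}R$, its preimage $\calL_\calF$ is an $R\lp u\rp$-lattice of rank $n$ sitting between $u^2\lambda_I$ and $\lambda_I$, whence $u^{-1}\calL_\calF$ is an $R\lp u\rp$-lattice between $u\lambda_I$ and $u^{-1}\lambda_I$. The crucial step is the duality check: $(u^{-1}\calL_\calF)^\vee=u^{-1}\calL_\calF$ when $I=\{0\}$, and $(u^{-1}\calL_\calF)^\vee=u\cdot(u^{-1}\calL_\calF)$ when $I=\{m\}$, where $\vee$ is dual for the hermitian form $\phi$. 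Using $(u^{-1}\calL)^\vee=u\calL^\vee$, each case reduces to an explicit identity between $\calL_\calF^\vee$ and a suitable $u$-power of $\calL_\calF$; one inclusion is equivalent to a $u$-order bound on $\phi(\calL_\calF,\calL_\calF)$, obtained from the isotropy $\calF=\calF^\perp$ combined with the $O_F$-stability of $\calF$, and the other inclusion follows by comparing $R\lp u\rp$-indices. Injectivity holds because $\calF$ is recovered by reducing $\calL_\calF=u\cdot(u^{-1}\calL_\calF)$ modulo $u^2\lambda_I$, and closedness follows from the projectivity of $\Mloc_I\otimes k$ together with the fact that its image lies in a single Schubert variety of $\Gr_{\scrG_I}$.

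For (II), the Schubert cell $C_i$ is characterized inside $\Gr_{\scrG_I}$ by the relative position $\lambda_i$ of $\calL$ with respect to the base lattice $\lambda_I$, which for lattices in our range reduces to the single numerical invariant $\dim_R(\calL+\lambda_I)/\lambda_I=i$. The key computation is to express this invariant for $\calL=u^{-1}\calL_\calF$ in terms of $\calF$. Using the inclusions $u^2\lambda_I\subset\calL_\calF\subset\lambda_I$ and the identity $u\bar\lambda_I=\ker(u\,|\,\bar\lambda_I)$ (which holds because $u^2=0$ on $\bar\lambda_I:=\lambda_I/u^2\lambda_I$), one obtains the chain of isomorphisms
\[
(u^{-1}\calL_\calF+\lambda_I)/\lambda_I \;\simeq\; \calL_\calF/(\calL_\calF\cap u\lambda_I) \;\simeq\; \calF/(\calF\cap u\bar\lambda_I) \;\simeq\; \calF/\ker(t\,|\,\calF) \;\xrightarrow{\;t\;}\; t\calF.
\]
Taking $R$-ranks yields $\dim_R(u^{-1}\calL_\calF+\lambda_I)/\lambda_I=\rank(t\calF)$, giving the desired description of the inverse image of $C_i$.

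The step I expect to be most delicate is the duality check in Part (I): the bare symmetric-isotropy of $\calF$ only controls the $F_0$-component of $\phi(\calF,\calF)$ modulo $u^2$, and the additional factor of $u$ needed for the hermitian duality of $u^{-1}\calL_\calF$ to land in the correct position in $\Gr_{\scrG_I}$ is produced by the $O_F$-stability of $\calF$, with the self-duality of $\lambda_0$ and the $\pi$-modularity of $\lambda_m$ accounting for the two distinct duality types.
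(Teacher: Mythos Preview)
Your argument is correct. Part (I) matches the paper's approach: the paper also derives the duality identity $\calL_\calF=u^2\calL_\calF^\vee$ (hence $(u^{-1}\calL_\calF)^\vee=u^{-1}\calL_\calF$ in the self-dual case, and the analogous $\pi$-modular identity for $I=\{m\}$) from the isotropy of $\calF$, using that the induced pairing $u^{-1}\phi$ on $\lambda_I/u^2\lambda_I$ is perfect; your remark that $O_F$-stability is what upgrades symmetric isotropy to full hermitian isotropy is exactly the mechanism behind the paper's one-line computation.

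Part (II) is where you genuinely diverge. The paper does not compute the invariant $\dim(\calL+\lambda_I)/\lambda_I$ directly; instead it invokes Proposition~\ref{prop 61}, which shows via transitivity of the $\scrG_I\otimes k$-action on both the strata $X_{h,l}$ and the Schubert cells that $\tau^{-1}(C_i)=\coprod_l X_{i,l}$, whence $C_i$ is characterized by $\rank(t\calF)=i$. Your route is more elementary and self-contained: the chain
\[
(u^{-1}\calL_\calF+\lambda_I)/\lambda_I \;\simeq\; \calL_\calF/(\calL_\calF\cap u\lambda_I) \;\simeq\; \calF/(\calF\cap u\bar\lambda_I) \;\simeq\; \calF/\ker(t|_\calF) \;\xrightarrow{\;t\;}\; t\calF
\]
is valid and bypasses any reference to the splitting model or to group-orbit arguments. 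The paper's approach has the advantage of situating the lemma within the stratification framework already developed, while yours would work equally well in isolation and makes the identification $h_0=i$ (left implicit in the paper's citation of Proposition~\ref{prop 61}) completely transparent.
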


\begin{proof}
For $I=\{0\}, n=2m+1$, note that the isotropic condition $\bb\calF,\calF\pp=0$ translates to $\calL_\calF=u^2\calL_\calF^\vee$: Since $\bb \ ,\ \pp$ is a nondegenerate alternating form on $u^{-1}\phi: (\lambda_0/u^2\lambda_0)\times (\lambda_0/u^2\lambda_0)\rightarrow k$, we obtain
\begin{flalign*}
\calL_\calF=\calL_{\calF^\bot}&=\{x\in \lambda_0\mid u^{-1}\phi(x,\calL_\calF)\subset u\cdot k\lp u\rp\}\\
&=\{x\in \lambda_0\mid \phi(x,\calL_\calF)\subset u^2\cdot k\lp u\rp\}\\
&=u^2\calL_\calF^\vee.
\end{flalign*}
By setting $\calL_0=u^{-1}\calL_{\calF}$, the above relation translates to $\calL_0=\calL_0^\vee$. Similarly, for $I=\{m\}, n=2m$ we get $\calL_m:=u^{-1}\calL_{\calF}=u^{-1}\calL_m^\vee$ by $\phi: (\lambda_m/u^2\lambda_m)\times (\lambda_m/u^2\lambda_m)\rightarrow k$. Therefore, we have an embdedding $\Mloc_{I}\otimes k\hookrightarrow \Gr_{\scrG_I}$.

By the proof of Proposition \ref{prop 61}, the inverse image $\tau^{-1}(C_i)$ is $\coprod X_{i,l}$ for a fixed $i$, so that $C_i$ corresponds to condition rank($t\calF)=i$. 
\end{proof}

We can give a moduli description of the Schubert variety $S_i$. For $S_i\subset \Gr_{\scrG_I}$, it is the functor on the category of $k-$schemes whose $R$ valued points are the set of lattices:
\[
S_i(R)=\{\calL_I\in \Gr_{\scrG_I}\mid {\rm inv}(\lambda_I\otimes R, \calL_0)\in -{\rm Adm}_I(\mu_{n-i,i}) \}.
\]
We will now consider a resolution $D_i$ of $S_i$. For $S_i\subset \Gr_{\scrG_0}$, let $D_i$ be the functor that sends $R$ to the set of pairs of lattices $(\calL_0,\calL_0')$ such that
\[
\begin{array}{l}
(1).  \calL_0\in S_i(R);\\
(2).  \calL_0'\subset u^{-1} {\calL_0'}^{\vee}\subset u^{-1}\calL_0' ~\text{such that}~ u^{-1} {\calL_0'}^{\vee}/\calL_0' ~\text{is locally free of rank $n-2i$}. \\
(3). \lambda_0\subset \calL_0' ~\text{such that}~ \calL_0'/\lambda_0 ~\text{is locally free of rank $i$}.\\
(4). \calL_0\subset \calL_0' ~\text{such that}~ \calL_0'/\calL_0 ~\text{is locally free of rank $i$}.
\end{array}
\]
For $S_i\subset \Gr_{\scrG_m}$, a resolution $D_i$ of $S_i$ is the functor that sends $R$ to the set of pairs of lattices $(\calL_m,\calL_m')$ such that
\[
\begin{array}{l}
(1).  \calL_m\in S_i(R);\\
(2).  \calL_m'\subset u^{-1} {\calL_m'}^{\vee}\subset u^{-1}\calL_m' ~\text{such that}~ u^{-1} {\calL_0'}^{\vee}/\calL_0' ~\text{is locally free of rank $2i$}. \\
(3). \calL_m'\subset \lambda_m ~\text{such that}~ \lambda_m/\calL_m' ~\text{is locally free of rank $i$}.\\
(4). \calL_m'\subset \calL_m ~\text{such that}~ \calL_m/\calL_m' ~\text{is locally free of rank $i$}.
\end{array}
\]
Denote by $\tilde{S}_i=L^+\calG_I\times^{L^+Q_i} \overline{(L^+Q_i e^{\lambda_i}L^+\calG_I)}/L^+\calG_I$, where $Q_i$ is the parahoric subgroup that stabilizes the standard lattice chain $\lambda_0\subset \lambda_i$ for $I=\{0\}$ resp. $\lambda_{m-i}\subset \lambda_m$ for $I=\{m\}$. We have a natural morphism
\[
m: \tilde{S}_i\rightarrow S_i
\] 
given by the multiplication.
\begin{Proposition}
For $I=\{0\}, n=2m+1$ or $I=\{m\}, n=2m$, there is a natural morphism $\phi: \tilde{S}_i\rightarrow D_i$ such that the following diagram
\begin{center}
    \begin{tikzcd}
        \tilde{S}_i \arrow[rr, "\phi"] \arrow[dr, "m"'] & & D_i \arrow[dl, "{\rm pr}_1"] \\
        & S_i &
    \end{tikzcd}
\end{center}
is commutative and $\phi$ is an isomorphism.
 \end{Proposition}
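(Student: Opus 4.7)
The plan is to construct $\phi$ by interpreting both $\tilde{S}_i$ and $D_i$ as moduli spaces of pairs of lattices, and then exhibit an explicit inverse. Write $\lambda_{i'}$ for the lattice in the $Q_i$-stable chain other than $\lambda_I$, namely $\lambda_{i'}=\lambda_i$ for $I=\{0\}$ and $\lambda_{i'}=\lambda_{m-i}$ for $I=\{m\}$; since $Q_i$ stabilises $\lambda_{i'}$, the rule $(g,y)\mapsto g\cdot\lambda_{i'}$ descends from $L^+\calG_I\times^{L^+Q_i}\overline{L^+Q_i\,e^{\lambda_i}L^+\calG_I}/L^+\calG_I$ to a morphism to $\Gr_{\calG_I}$, and similarly $(g,y)\mapsto gy\cdot\lambda_I$ is well-defined. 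I would then set $\phi(g,y):=(gy\cdot\lambda_I,\,g\cdot\lambda_{i'})$. Condition~(2) of $D_i$ is automatic because $g$ preserves the hermitian structure, conditions~(3) and (4) follow because $g$ sends the standard inclusion between $\lambda_{i'}$ and $\lambda_I$ to one of the same colength and because $y\in\overline{L^+Q_i\,e^{\lambda_i}L^+\calG_I}/L^+\calG_I$ forces the relative position of $\calL'_I$ and $\calL_I$ to be prescribed by the minuscule coweight $\lambda_i$; condition~(1) is built into the definition of $\tilde{S}_i$. Commutativity of the triangle with $m$ and ${\rm pr}_1$ is immediate since both maps extract $\calL_I = gy\cdot\lambda_I$.

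For the inverse, given an $R$-point $(\calL_I,\calL'_I)\in D_i(R)$, conditions~(2) and (3) identify $\calL'_I$ fpqc-locally with a translate $g\cdot\lambda_{i'}$ by some section $g\in L^+\calG_I$, uniquely determined modulo $L^+Q_i$ since the latter is the stabiliser of $\lambda_{i'}$. The relative-position condition~(4) then forces $g^{-1}\calL_I\in\overline{L^+Q_i\,e^{\lambda_i}L^+\calG_I}/L^+\calG_I$, and the pair $(g,g^{-1}\calL_I)$ descends to a well-defined point of $\tilde{S}_i$ because the $L^+Q_i$-indeterminacy in $g$ is precisely the equivalence relation in the twisted product. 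A direct verification shows that this yields a morphism $D_i\to\tilde{S}_i$ inverse to $\phi$.

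The main obstacle will be the last step: verifying that the lattice-theoretic conditions~(2)--(4) of $D_i$ carve out \emph{exactly} the closure $\overline{L^+Q_i\,e^{\lambda_i}L^+\calG_I}/L^+\calG_I$ inside $\Gr_{\calG_I}$, rather than a strictly larger sub-ind-scheme. This reduces to a stratum-by-stratum Bruhat comparison, using that the admissible set $\mathrm{Adm}_I(\mu_{n-i,i})$ recalled above is a totally ordered chain and that each of its elements corresponds to a prescribed rank of $t$ acting on the candidate pair $(\calL_I,\calL'_I)$. Matching the constraints imposed by $D_i$ with these strata, separately in the self-dual ($n=2m+1$, $I=\{0\}$) and $\pi$-modular ($n=2m$, $I=\{m\}$) cases, closes the argument and shows that $\phi$ is an isomorphism.
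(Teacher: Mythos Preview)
Your proposal follows essentially the same route as the paper. The paper's own proof is very brief: it defers to \cite[Proposition~4.4]{Richarz} and simply writes down the map $\phi$ as $(g_1,g_2)\mapsto (g_1g_2\cdot\lambda_I,\,g_1\cdot\lambda_I')$ with $\lambda_I'=\lambda_I\cap e^{-\mu_i}\lambda_I$, whereas you use the standard lattice $\lambda_{i'}$ in the $Q_i$-chain; these choices lie in the same $L^+\calG_I$-orbit, so the difference is cosmetic. Your sketch of the inverse via fpqc-local sections of $L^+\calG_I\to L^+\calG_I/L^+Q_i$ and your identification of the ``main obstacle'' (that conditions~(2)--(4) cut out exactly the minuscule Schubert closure) are precisely the content of Richarz's argument that the paper invokes, so nothing further is needed.
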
 
 
 \begin{proof}
 Mimic the proof of \cite[Proposition 4.4]{Richarz}. Set $\lambda_I'=\lambda_I\cap (e^{-\mu_i}\lambda_I)$ for $I=\{0\}$ or $I=\{m\}$. There is a natural morphism $\phi: \tilde{S}_i\rightarrow D_i$	given by 
 \[
 (g_1, g_2)\cdot (Q_i(R\lp t\rp), \calG_I(R\lp t\rp))\mapsto (g_1g_2\cdot \lambda_I\otimes_k R, g_1\cdot\lambda_I'\otimes_k R).
 \] It is easy to see that the diagram commutes. 
 \end{proof}
 
Now we want to connect the resolution $D_i$ to the special fiber of splitting models $\Mspl$. Consider the smooth irreducible component $Z\subset \Mspl\otimes k$, where $Z=\overline{X_{s,s}}=\{(\calF,\calG)\in \Mspl\otimes k\mid \{\calG,\calG\}=0\}$. Here $\{\ ,\ \}$ is the modified pairing in Definition \ref{G2} when $I=\{m\}, n=2m$, and in \cite[\S 2.2]{BH} when $I=\{0\}, n=2m+1$.

\begin{Proposition}\label{commdiag}
For $I=\{0\}, n=2m+1$ (resp. $I=\{m\}, n=2m$), there is an isomorphism $\varphi: Z\rightarrow D_s$ given by $(\calF, \calG)\mapsto (u^{-1}\calL_{\calF}, u^{-2}\calL_{\calG})$ (resp. $(\calF, \calG)\mapsto (u^{-1}\calL_{\calF}, u\calL_{\calG}^\vee)$), such that the following diagram is commutative:
\begin{center}
    \begin{tikzcd}
        Z \arrow[r, "\varphi"] \arrow[d, "{\rm pr_1}"'] & D_s \arrow[d, "{\rm pr_1}"] \\
        \Mloc\otimes k \arrow[r, "\simeq"] & S_s
    \end{tikzcd}
\end{center}		
\end{Proposition}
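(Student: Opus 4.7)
The plan is to verify that $\varphi$ defines a morphism of functors $Z \to D_s$ satisfying the four axioms of $D_s$, exhibit an explicit inverse, and observe commutativity of the diagram. I treat the $\pi$-modular case $I=\{m\},\, n=2m$; the self-dual case is entirely parallel using the symmetric modified pairing of \cite[\S 2.2]{BH}.

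For $(\calF,\calG) \in Z(R)$ set $\calL_m := u^{-1}\calL_\calF$ and $\calL_m' := u\calL_\calG^\vee$. The preceding lemma gives $\calL_m \in \Gr_{\scrG_m}(R)$, and since $Z = \overline{X_{s,s}}$ has $\rank(t\calF) = s$ on the open stratum, Proposition \ref{prop 61} places $\calL_m$ in $S_s(R)$; this is axiom (1). Axioms (3) and (4), giving the inclusions $\calL_m' \subset \lambda_m$ and $\calL_m' \subset \calL_m$ with the stated quotient ranks, follow from $\calG \subset t\Lambda_m$, $\calG \subset \calF$ and $\rank \calG = s$ by a direct unwinding of the dictionary between filtered subspaces and lattice chains.

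The technical heart is axiom (2), the self-duality condition $\calL_m' \subset u^{-1}(\calL_m')^\vee \subset u^{-1}\calL_m'$. The outer inclusion is automatic. For the middle one, using the formula $(uL)^\vee = u^{-1}L^\vee$ gives $(\calL_m')^\vee = u^{-1}\calL_\calG$, so the inclusion becomes a lattice-theoretic isotropy statement relating $\calL_\calG^\vee$ and $\calL_\calG$. The key step is to match this --- via Lemma \ref{lm38}, which identifies the modified pairing as a shift of the original one --- with the reduction modulo $t$ of the defining condition $\{\calG,\calG\} = 0$ of $Z$. The rank of $u^{-1}(\calL_m')^\vee/\calL_m'$ then comes out to $2s$, matching the value $2i$ at $i=s$ in the definition of $D_i$.

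For the inverse $\psi : D_s \to Z$, I would send $(\calL_m, \calL_m')$ to the pair obtained by reducing $u\calL_m$ and $u(\calL_m')^\vee$ modulo $t$; the axioms of $D_s$ translate back, by reversing the above arguments, into $\calF \in \Mloc \otimes k$, the splitting and rank conditions on $\calG \subset \calF$, and the isotropy $\{\calG,\calG\}=0$. Commutativity of the square is tautological, since both compositions $Z \to S_s$ send $(\calF,\calG) \mapsto u^{-1}\calL_\calF$. The main obstacle will be the bookkeeping in the isotropy step --- carefully passing between the alternating modified pairing on $t\Lambda_m$ and the self-duality conditions on the affine-Grassmannian side, respecting the $u$-shift relating $\calL_\calG$ and its dual. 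Once that dictionary is firmly in place, the rest mirrors the group-theoretic isomorphism $\tilde{S}_s \simeq D_s$ from the previous proposition.
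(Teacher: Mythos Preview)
Your approach is essentially the same as the paper's: both proofs set up the dictionary between the splitting-model data $(\calF,\calG)$ and the lattice pair $(\calL_m,\calL_m')$, then verify the four axioms of $D_s$ one by one and exhibit the inverse by reducing modulo $t$. The paper carries out the self-dual case $I=\{0\}$ in detail and declares the $\pi$-modular case similar; you do the reverse, which is fine.

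One small point to tighten: your claim that in the chain $\calL_m' \subset u^{-1}(\calL_m')^\vee \subset u^{-1}\calL_m'$ ``the outer inclusion is automatic'' is not quite right as stated --- both inclusions require checking. In the paper's treatment of $I=\{0\}$, the inclusion $(\calL_0')^\vee \subset \calL_0'$ is derived not from the isotropy condition but from $\lambda_0 \subset \calL_0'$ together with $\lambda_0^\vee = \lambda_0$; the isotropy condition $\{\calG,\calG\}=0$ then supplies the \emph{other} inclusion $\calL_0' \subset u^{-1}(\calL_0')^\vee$. For $I=\{m\}$ the analogous split occurs (using $\lambda_m^\vee = u\lambda_m$ and $\calL_m' \subset \lambda_m$), so you will need both pieces of input, not just the modified-pairing isotropy. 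This is a bookkeeping matter rather than a genuine gap, and your plan otherwise matches the paper's argument.
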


\begin{proof}
For $I=\{0\}, n=2m+1$, a point $(\calF,\calG)\in \Mspl\otimes k$ satisfies condition $(1)$-$(4)$ in \cite[\S 2.1]{Zac1}. Set $\calL_\calF, \calL_\calG$ the inverse image of $\calF, \calG$, condition (1)-(4) are equivalent to 
\[
\begin{array}{l}
(1).\quad u^2\lambda_0\subset \calL_\calG\subset \calL_\calF\subset \lambda_0,\\
(2).\quad \calL_\calF=u^2\calL_\calF^\vee,\\
(3).\quad u\calL_\calF\subset \calL_\calG,\\
(4).\quad u\calL_\calG\subset u^2\lambda_0.
\end{array}
\] 
Note that the modified pairing $\{\ ,\ \}=u^{-2}\phi: (u\lambda_0/u^2\lambda_0)\times (u\lambda_0/u^2\lambda_0)\rightarrow k$ gives $\calL_{\calG^{\bot'}}=u^3 \calL_{\calG}^\vee$. Therefore, by setting $(\calL_0,\calL_0')=(u^{-1}\calL_{\calF}, u^{-2}\calL_{\calG})$, a point $(\calF,\calG)\in Z$ is equivalent to $(\calL_0,\calL_0')$ satisfying
\[
\begin{array}{l}
(1).\quad u\lambda_0\subset u\calL_0'\subset \calL_0\subset u^{-1}\lambda_0,\\
(2).\quad \calL_0=\calL_0^\vee,\\
(3).\quad \calL_0\subset \calL_0',\\
(4).\quad \calL_0'\subset u^{-1}\lambda_0.\\
(5).\quad \calL_0'\subset u^{-1}{\calL_0'}^\vee
\end{array}
\] 
with rank($\calL_0'/\lambda_0)=s$, rank($\calL_0/u\lambda_0)=n$. We claim that $(\calL_0,\calL_0')\in D_s$. Indeed, it is easy to see that ${\calL_0'}^\vee\subset \lambda_0^\vee=\lambda_0\subset \calL_0'$. Since $\lambda_0\subset \calL_0'\subset u^{-1}{\calL_0'}^\vee$ and rank($u^{-1}{\calL_0'}^\vee/\lambda_0)=$rank($\lambda_0/u{\calL_0'})=n-s$, we obtain rank($u^{-1}{\calL_0'}^\vee/\calL_0')=n-2s$. Similarly, rank($\calL_0'/\calL_0)=$rank($u^{-1}\lambda_0/\calL_0)-$rank($u^{-1}\lambda_0/\calL_0')=s$.

Conversely, for any point $(\calL_0,\calL_0')\in D_s$, we get a pair of points $(\calF, \calG)$ given by $\calF=\calL_0/u\lambda_0, \calG=\calL_0'/\lambda_0$. To prove $(\calF, \calG)\in Z$, it is enough to show that $u\calL_0'\subset \calL_0, \calL_0\subset u^{-1}\lambda_0, \calL_0'\subset u^{-1}\lambda_0$ with correct ranks. Consider the dual of $\calL_0'\subset u^{-1}{\calL_0'}^\vee$. We get $u\calL_0'\subset {\calL_0'}^\vee\subset \calL_0^\vee=\calL_0$ and $u\calL_0'\subset {\calL_0'}^\vee\subset \lambda_0^\vee=\lambda_0$. Thus, we have $\calL_0'\subset u^{-1}\lambda_0$ and $\calL_0\subset u^{-1}\lambda_0$ by $\calL_0\subset \calL_0'$. It is easy to see that the diagram is commutative. 

The proof of the $\pi$-modular case is similar. We only want to mention that the spin condition is already shown in the condition $\calL_m\in S_i(R)$.
\end{proof}

\begin{Remark}{\rm 
Proposition \ref{commdiag} shows that the restriction $\tau\mid_Z: Z\rightarrow \Mloc\otimes k$ is a surjective birational projection morphism, and can be viewed as the affine Demazure resolution $D_s\rightarrow S_s$. By viewing the stratification $X_{h,l}$ and the Schubert cells $C_i$, we have
\[
Z=\coprod_{i=1}^s X_{i,s}\rightarrow \Mloc=\coprod_{i=1}^s C_i,\quad {\text for}~ I=\{0\}
\]
\[
Z=\coprod_{i=s \, \text{mod} \, 2} X_{i,s}\rightarrow \Mloc=\coprod_{i=s \, \text{mod} \, 2} C_i,\quad {\text for}~ I=\{m\} 
\]
such that the inverse image of $C_i$ is $X_{i,s}$.
}\end{Remark}

\Addresses
\end{document}